\theoremstyle{plain}
\newtheorem{thm}{Theorem}[section]
\newtheorem{lem}[thm]{Lemma}
\newtheorem{notation}[thm]{Notation}
\def\@rst #1 #2other{#1}
\newcommand\MR[1]{\relax\ifhmode\unskip\spacefactor3000 \space\fi
  \MRhref{\expandafter\@rst #1 other}{#1}}
\newcommand{\MRhref}[2]{\href{http://www.ams.org/mathscinet-getitem?mr=#1}{MR#2}}
\theoremstyle{definition}
\newtheorem{defn}[thm]{Definition}
\newtheorem{remark}[thm]{Remark}
\numberwithin{equation}{section}
\newcommand{\dsb}{\begin{adjustwidth}{2.5em}{0pt}
\begin{footnotesize}}
\newcommand{\dse}{\end{footnotesize}
\end{adjustwidth}}
\newcommand{\ssb}{\begin{adjustwidth}{2.5em}{0pt}}
\newcommand{\sse}{\end{adjustwidth}}
\newcommand{\aryb}{\begin{eqnarray*}}
\newcommand{\arye}{\end{eqnarray*}}
\def\alb#1\ale{\begin{align*}#1\end{align*}}
\newcommand{\eqb}{\begin{equation}}
\newcommand{\eqe}{\end{equation}}
\newcommand{\eqbn}{\begin{equation*}}
\newcommand{\eqen}{\end{equation*}}
\newcommand{\BB}{\mathbbm}
\newcommand{\ol}{\overline}
\newcommand{\ul}{\underline}
\newcommand{\op}{\operatorname}
\newcommand{\im}{\operatorname{Im}}
\newcommand{\frk}{\mathfrak}
\newcommand{\eqD}{\overset{d}{=}}
\newcommand{\ep}{\epsilon}
\newcommand{\rta}{\rightarrow}
\newcommand{\wt}{\widetilde}
\newcommand{\wh}{\widehat} 
\newcommand{\mcl}{\mathcal}
\newcommand{\bdy}{\partial}
\let\originalleft\left
\let\originalright\right
\renewcommand{\left}{\mathopen{}\mathclose\bgroup\originalleft}
\renewcommand{\right}{\aftergroup\egroup\originalright}
\title{Metric gluing of Brownian and\\$\sqrt{8/3}$-Liouville quantum gravity surfaces}
\date{  }
\author{
\begin{tabular}{c} Ewain Gwynne\\[-5pt]\small MIT \end{tabular}
\begin{tabular}{c} Jason Miller\\[-5pt]\small Cambridge \end{tabular}
}
\begin{document}

\maketitle

\begin{abstract}
In a recent series of works, Miller and Sheffield constructed a metric on $\sqrt{8/3}$-Liouville quantum gravity (LQG) under which $\sqrt{8/3}$-LQG surfaces (e.g., the LQG sphere, wedge, cone, and disk) are isometric to their Brownian surface counterparts (e.g., the Brownian map, half-plane, plane, and disk).  

We identify the metric gluings of certain collections of independent $\sqrt{8/3}$-LQG surfaces with boundaries identified together according to LQG length along their boundaries.  
Our results imply in particular that the metric gluing of two independent instances of the Brownian half-plane along their positive boundaries is isometric to a certain LQG wedge decorated by an independent chordal SLE$_{8/3}$ curve.  If one identifies the two sides of the boundary of a single Brownian half-plane, one obtains a certain  LQG cone decorated by an independent whole-plane SLE$_{8/3}$. If one identifies the entire boundaries of two Brownian half-planes, one obtains a different LQG cone and the interface between them is a two-sided variant of whole-plane SLE$_{8/3}$.

Combined with another work of the authors, the present work identifies the scaling limit of self-avoiding walk on random quadrangulations with SLE$_{8/3}$ on $\sqrt{8/3}$-LQG.
\end{abstract}

\tableofcontents

\section{Introduction}
\label{sec-intro}

\subsection{Overview}
\label{sec-overview}

For $\gamma \in (0,2)$, a \emph{Liouville quantum gravity (LQG) surface} is (formally) a random Riemann surface parameterized by a domain $D\subset \BB C$ whose Riemannian metric tensor is $e^{\gamma h(z)} \, dx\otimes dy$, where $h$ is some variant of the Gaussian free field (GFF) on $D$ and $dx\otimes dy$ denotes the Euclidean metric tensor. Liouville quantum gravity surfaces are conjectured to arise as the scaling limits of various random planar map models; the case $\gamma =\sqrt{8/3}$ corresponds to uniformly random planar maps, and other values of $\gamma$ are obtained by weighting by the partition function of an appropriate statistical mechanics model (see~\cite{shef-burger,kmsw-bipolar,gkmw-burger} for examples of such statistical mechanics models).  This has so far been shown to be the case for $\gamma=\sqrt{8/3}$ with respect to the Gromov-Hausdorff topology in the works \cite{legall-uniqueness,miermont-brownian-map} and \cite{tbm-characterization,lqg-tbm1,sphere-constructions,lqg-tbm2,lqg-tbm3} and for all $\gamma \in (0,2)$ in the so-called peanosphere sense in \cite{shef-burger,kmsw-bipolar,gkmw-burger} and \cite{wedges}.

Since $h$ is only a distribution (i.e., generalized function) and does not have well-defined pointwise values, this object does not make rigorous sense. However, it was shown by Duplantier and Sheffield~\cite{shef-kpz} that one can rigorously define the volume measure associated with an LQG surface. More specifically, there is a measure~$\mu_h$ on~$D$, called the \emph{$\gamma$-LQG measure}, which is the a.s.\ limit of regularized versions of $e^{\gamma h(z)} \, dz$, where $dz$ is the Euclidean volume form. One can similarly define the \emph{$\gamma$-LQG length measure} $\nu_h$ on certain curves in $D$, including $\bdy D$ and Schramm's \cite{schramm0} SLE$_\kappa$-type curves for $\kappa  = \gamma^2$~\cite{shef-zipper}. 

In the recent works~\cite{tbm-characterization,lqg-tbm1,sphere-constructions,lqg-tbm2,lqg-tbm3}, Miller and Sheffield showed that in the special case when $\gamma = \sqrt{8/3}$, an LQG surface is equipped with a natural metric (distance function) $\frk d_h$, so can be interpreted as a random metric space. 
We will review the construction of $\frk d_h$ in Section~\ref{sec-lqg-metric}.

In this paper we will be interested in several particular types of $\sqrt{8/3}$-LQG surfaces which are defined in~\cite{wedges}. These include quantum spheres, which are finite-volume LQG surfaces (i.e., the total mass of the $\sqrt{8/3}$-LQG measure $\mu_h$ is finite) parameterized by the Riemann sphere $\BB C \cup\{\infty\}$; 
quantum disks, which are finite-volume LQG surfaces with boundary; 
quantum cones, which are infinite-volume LQG surfaces homeomorphic to $\BB C$; and
quantum wedges, which are infinite-volume LQG surfaces with infinite boundary. 
We will review the definitions of these particular types of quantum surfaces in Section~\ref{sec-lqg-surface} below. 

The \emph{Brownian map}~\cite{legall-uniqueness,miermont-brownian-map} is a continuum random metric space which arises as the scaling limit of uniform random planar maps, and which is constructed using a continuum analog of the Schaeffer bijection~\cite{schaeffer-bijection}. We refer to the surveys~\cite{legall-sphere-survey,miermont-survey} for more details about this object. 
One can also define Brownian surfaces with other topologies, such as the Brownian half-plane, which is the scaling limit of the uniform infinite half-planar quadrangulations \cite{gwynne-miller-uihpq,bmr-uihpq};  the Brownian plane~\cite{curien-legall-plane}, which is the scaling limit of the uniform infinite planar quadrangulation; and the Brownian disk~\cite{bet-mier-disk}, which is the scaling limit of finite quadrangulations with boundary. 
It is shown in~\cite[Corollary~1.5]{lqg-tbm2} (or~\cite[Proposition~1.10]{gwynne-miller-uihpq} in the half-plane case) that each of these Brownian surfaces is isometric to a certain special $\sqrt{8/3}$-LQG surface, i.e.\ one can couple the two random metric spaces in such a way that there a.s.\ exists an isometry between them. It is shown in \cite{lqg-tbm3} that in fact the $\sqrt{8/3}$-LQG structure is a measurable function of the Brownian surface structure and it follows from the construction in \cite{lqg-tbm1,lqg-tbm2} that the converse holds). In particular,
\begin{itemize}
\item The Brownian map is isometric to the quantum sphere;
\item The Brownian disk is isometric to the quantum disk;
\item The Brownian plane is isometric to the $\sqrt{8/3}$- (weight-$4/3$) quantum cone;
\item The Brownian half-plane is isometric to the $\sqrt{8/3}$- (weight-$2$) quantum wedge.
\end{itemize}
Furthermore, the isometries are such that the $\sqrt{8/3}$-LQG area measure is pushed forward to the natural volume measure on the corresponding Brownian surface and (in the case of the disk or half-plane) the $\sqrt{8/3}$-LQG boundary length measure is pushed forward to the natural boundary length measure on the Brownian disk or half-plane. That is, the spaces are equivalent as metric measure spaces.  Hence the construction of the $\sqrt{8/3}$-LQG metric in~\cite{tbm-characterization,lqg-tbm1,sphere-constructions,lqg-tbm2,lqg-tbm3} can be interpreted as:
\begin{itemize}
\item Endowing the Brownian map, half-plane, plane, and disk with a canonical conformal structure and
\item Constructing many additional random metric spaces (corresponding to other LQG surfaces) which locally look like Brownian surfaces.
\end{itemize}

The goal of this paper is to study metric space quotients (also known as metric gluings) of $\sqrt{8/3}$-LQG surfaces, equivalently Brownian surfaces, glued along their boundaries according to quantum length. The results described in~\cite[Section~1.2]{wedges} (see also~\cite{shef-zipper}) show that one can \emph{conformally} glue various types of quantum surfaces along their boundaries according to quantum length to obtain different quantum surfaces. In each case, the interface between the glued surfaces is an SLE$_\kappa$-type curve with $\kappa \in \{\gamma^2 , 16/\gamma^2\}$ (so $\kappa \in \{8/3,6\}$ when $\gamma = \sqrt{8/3}$). We will show that in the case when the interface is a simple curve (so $\kappa = 8/3$), this conformal gluing is the same as the metric gluing. In other words, the $\sqrt{8/3}$-LQG metric on the glued surface is the metric quotient of the $\sqrt{8/3}$-LQG metrics on the surfaces being glued. See Sections~\ref{sec-bhp-gluing} and~\ref{sec-results} for precise statements. We emphasize that we will be considering metric gluings along rough, fractal curves and in general such gluings are not well-behaved.  See Section~\ref{sec-gluing-remarks} for a discussion of the difficulties involved in proving metric gluing statements of the sort we consider here.

\begin{figure}[ht!!]
\begin{center}
\includegraphics[scale=0.7]{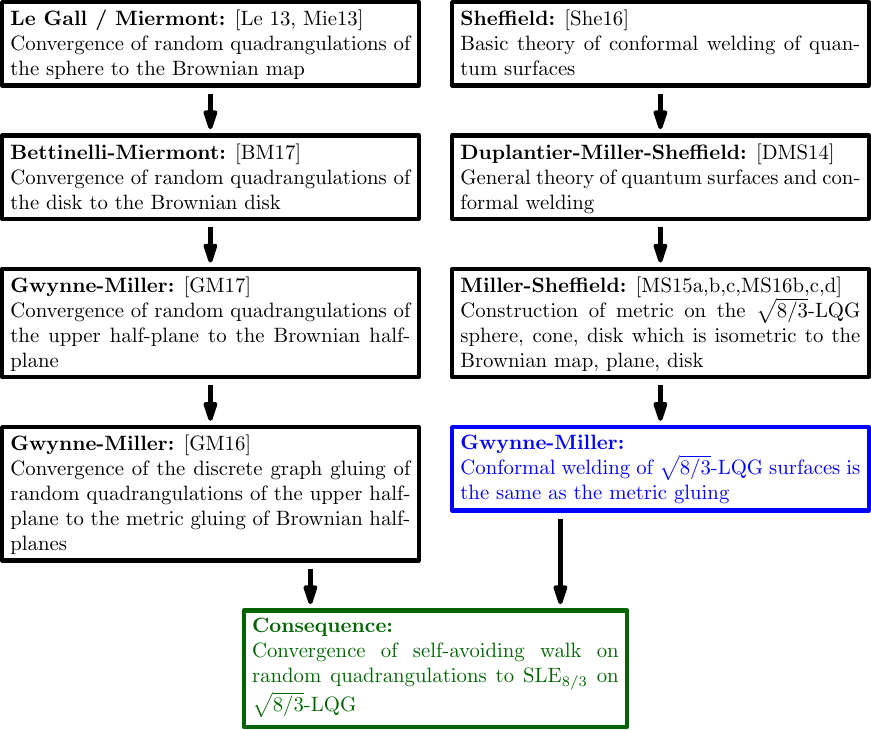}	
\end{center}
\vspace{-0.02\textheight}
\caption[Relationships between papers related to Chapter~\ref{chap-gluing}]{\label{fig-chart} A chart of the different components which serve as input into the proof that self-avoiding walk on random quadrangulations converges to SLE$_{8/3}$ on $\sqrt{8/3}$-LQG.  The present article corresponds to the blue box and implies that the embedding into $\mathbb C$ via so-called QLE$(8/3,0)$ of the metric gluing of a pair of Brownian half-planes to each other along their boundary or a single Brownian half-plane to itself along its boundary is described by $\sqrt{8/3}$-LQG where the interface is an SLE$_{8/3}$-type path.}
\end{figure}

In light of the aforementioned relationship between $\sqrt{8/3}$-LQG surfaces and Brownian surfaces, our results translate into results for Brownian surfaces. In particular, our results imply that each of the Brownian map and the Brownian plane are metric space quotients of countably many independent Brownian disks glued together along their boundaries (Theorems~\ref{thm-peanosphere-gluing} and~\ref{thm-peanosphere-gluing-finite}); and that when one metrically glues together two independent Brownian surfaces, the resulting surface locally looks like a Brownian surface (even at points along the gluing interface). To our knowledge, it is not known how to derive either of these facts directly from the Brownian surface literature. Hence this work can be viewed as an application of $\sqrt{8/3}$-LQG  to the theory of Brownian surfaces. However, our proofs will also make use of certain facts about Brownian surfaces which are not obvious directly from the $\sqrt{8/3}$-LQG perspective (in particular the estimates for the Brownian disk contained in Section~\ref{sec-disk-estimate}).

The results of this paper also have applications to scaling limits of random quadrangulations decorated by a self-avoiding walk.  Indeed, it is known that gluing together two uniformly random quadrangulations with simple boundary along their boundaries according to boundary length (or gluing two boundary arcs of a single uniformly random quadrangulation with simple boundary to each other according to boundary length) produces a uniformly random pair consisting of a quadrangulation decorated by a self-avoiding walk. See~\cite[Section~8.2]{bet-disk-tight} (which builds on~\cite{bbg-recursive-approach,bg-simple-quad}) for the case of finite quadrangulations with simple boundary and~\cite[Part III]{caraceni-thesis},~\cite{caraceni-curien-saw} for the case of the uniform infinite half-planar quadrangulation with simple boundary (UIHPQ$_{\op{S}}$).  The results of the present work combined with~\cite{gwynne-miller-saw,gwynne-miller-uihpq} imply that the scaling limit of a uniform infinite planar or half-planar quadrangulation decorated by a self-avoiding walk is an appropriate $\sqrt{8/3}$-LQG surface decorated by an SLE$_{8/3}$-type curve.  See Figure~\ref{fig-chart} for a schematic diagram of how the different works of the authors fit together with the existing literature to deduce this result.

\bigskip

\noindent{\bf Acknowledgements}
E.G.\ was supported by the U.S. Department of Defense via an NDSEG fellowship.  E.G.\ also thanks the hospitality of the Statistical Laboratory at the University of Cambridge, where this work was started.  J.M.\ thanks Institut Henri Poincar\'e for support as a holder of the Poincar\'e chair, during which this work was completed. We thank two anonymous referees for helpful comments on an earlier version of this article.

\subsection{Metric gluing of Brownian half-planes}
\label{sec-bhp-gluing}

Here we state several special cases of our main results which describe the curve-decorated metric spaces obtained by gluing together Brownian half-planes~\cite{gwynne-miller-uihpq,bmr-uihpq} along their boundaries according to the natural length measure as certain \emph{quantum wedges} or \emph{quantum cones} --- particular types of $\sqrt{8/3}$-LQG surfaces whose definition is reviewed in Section~\ref{sec-lqg-surface} --- equipped with their $\sqrt{8/3}$-LQG metric and decorated by SLE$_{8/3}$ curves (which correspond to the gluing interfaces). We note that a quantum wedge (resp.\ cone) can be parameterized by the half-plane (resp.\ whole plane). See Figure~\ref{fig-thick-gluing} for an illustration of the theorem statements in this section. 

The general versions of our main results, which describe the metric gluings of general quantum wedges, are stated in Section~\ref{sec-results} below. The results in this section follow from these general statements and the fact that the Brownian half-plane is the same as the $\sqrt{8/3}$- (weight-2) quantum wedge. 

We first consider two independent Brownian half-planes glued along ``half" of their boundaries, which corresponds to the left panel of Figure~\ref{fig-thick-gluing}. 

\begin{thm}
\label{thm-brownian-half-plane-wedge}
Suppose that we have two independent instances of the Brownian half-plane.  Then the metric quotient obtained by gluing the two surfaces together according to boundary length on their positive boundaries is isometric to the $\sqrt{8/3}$-LQG metric space associated with a weight-$4$ quantum wedge.  Moreover, the interface between the two Brownian half-plane instances is a chordal SLE$_{8/3}$ curve on this weight-$4$ quantum wedge, sampled independently from the wedge.
\end{thm}
 
We note that one can make sense of a chordal SLE$_{8/3}$ curve on a quantum wedge since the latter has a canonical conformal structure (i.e., a canonical embedding into $\BB H$, modulo scaling). One particular implication of Theorem~\ref{thm-brownian-half-plane-wedge}, which is not at all clear from the definition of the Brownian half-plane, is that the interface between the two Brownian half-planes (i.e., the image of the two glued boundary rays under the quotient map) is a simple curve. See Section~\ref{sec-gluing-remarks} for further discussion of this.

We next state an analog of Theorem~\ref{thm-brownian-half-plane-wedge} when we glue the two boundary rays of a single Brownian half-plane to itself, to get a quantum surface with the topology of the plane (Figure~\ref{fig-thick-gluing}, middle panel). 

\begin{thm}
\label{thm-brownian-half-plane-cone}
The metric space obtained by gluing the positive and negative boundaries of an instance of the Brownian half-plane together according to boundary length is isometric to the $\sqrt{8/3}$-LQG metric space associated with a weight-$2$ quantum cone and the interface is a whole-plane SLE$_{8/3}$ curve on this weight-$2$ quantum cone, sampled independently from the cone. 
\end{thm}

Finally, we consider two independent Brownian half-planes glued together along their \emph{whole} boundaries, which is illustrated in the right panel of Figure~\ref{fig-thick-gluing}. In this case the description of the gluing interface is slightly more complicated and involves whole-plane SLE$_{\kappa}(\rho)$ curves, which are defined in~\cite{ig4}.

\begin{thm}
\label{thm-brownian-half-plane-2side}
Suppose that we have two independent instances of the Brownian half-plane.  Then the metric quotient obtained by gluing the two surfaces together according to boundary length is isometric to the $\sqrt{8/3}$-LQG metric space associated with a weight-$4$ quantum cone.  Moreover, the interface between the two Brownian half-plane instances is an SLE$_{8/3}$-type curve independent from the weight-$4$ quantum cone, which can be sampled as follows. First sample a whole-plane SLE$_{8/3}(2)$ curve $\eta_1$ from $0$ to $\infty$; then, conditional on $\eta_1$, sample a chordal SLE$_{8/3}$ curve~$\eta_2$ from~$0$ to~$\infty$ in $\BB C\setminus \eta_1$. 
\end{thm}

We remark that the interface in Theorem~\ref{thm-brownian-half-plane-2side} can also be described by a pair of GFF flow lines \cite{ig1,ig4}. Theorem~\ref{thm-brownian-half-plane-2side} is deduced from Theorem~\ref{thm-brownian-half-plane-wedge} and Theorem~\ref{thm-cone-gluing}, in a manner described in Section~\ref{sec-results}.

\begin{figure}[ht!]
 \begin{center}
\includegraphics[scale=.9]{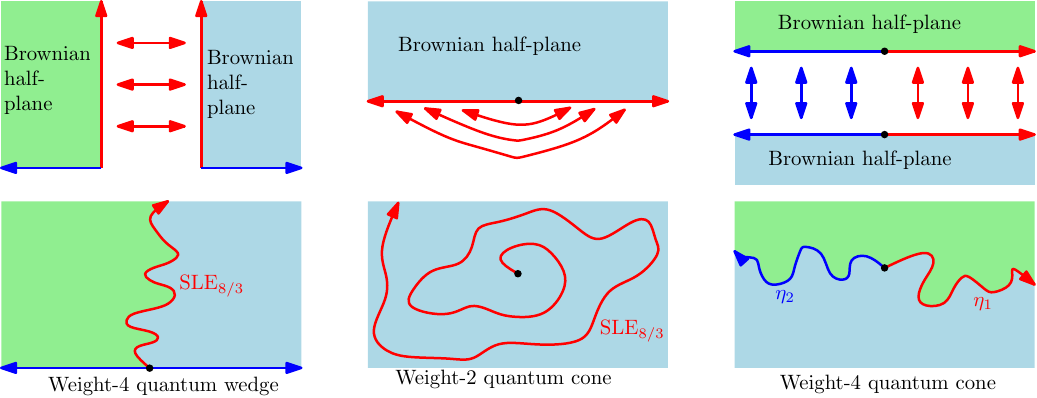} 
\caption[Gluing together Brownian half-planes]{\textbf{Left:} Illustration of Theorem~\ref{thm-brownian-half-plane-wedge} (the picture in Theorem~\ref{thm-wedge-gluing} in the case when $\frk w^- , \frk w^+ \geq 4/3$ is qualitatively the same). Gluing together two independent Brownian half-planes (green and blue) along their positive boundary rays produces a weight-$4$ quantum wedge decorated by a chordal SLE$_{8/3}$ curve. 
\textbf{Middle:} Illustration of Theorem~\ref{thm-brownian-half-plane-cone} (the picture in Theorem~\ref{thm-cone-gluing} is qualitatively the same). Gluing two complementary boundary rays of a Brownian half-plane produces a weight-$2$ quantum cone decorated by a whole-plane SLE$_{8/3}$. 
\textbf{Right:} Illustration of Theorem~\ref{thm-brownian-half-plane-2side}. Two weight-$2$ quantum wedges (Brownian half-planes) are glued together along their whole boundaries according to quantum length to obtain a weight-$4$ quantum cone. The gluing interface is a pair of non-intersecting SLE$_{8/3}$-type curves.
}\label{fig-thick-gluing}
\end{center}
\end{figure}


\subsection{Background on SLE and LQG}
\label{sec-prelim}

Before we state the general versions of our main results, we give a brief review of some definitions related to SLE and LQG which are needed for the statements. 

\subsubsection{Schramm-Loewner evolution}
\label{sec-sle}

Let $\kappa > 0$ (here we will only need the case $\kappa = 8/3$) and let $\ul\rho = (\rho_1,\dots , \rho_n)$ be a finite vector of weights. Also let $D\subset \BB C$ be a simply connected domain and let $x,y, z_1,\dots ,z_n \in D\cup \bdy D$. A \emph{chordal $\op{SLE}_\kappa(\ul\rho)$ from $x$ to $y$ in $D$} is a variant of chordal SLE$_\kappa$ from $x$ to $y$ in $D$ which has additional \emph{force points} at $z_1,\dots ,z_n$ of weights $\rho_1,\dots , \rho_n$, respectively.  It is a conformally invariant process which satisfies the so-called domain Markov process where one has to keep track of the extra marked points.  Chordal $\op{SLE}_\kappa(\ul\rho)$ processes were first introduced in~\cite[Section~8.3]{lsw-restriction}. See also~\cite{sw-coord} and~\cite[Section~2.2]{ig1}. In this paper we will primarily be interested in chordal SLE$_\kappa(\rho^L;\rho^R)$ with two force points of weight $\rho^L$ and $\rho^R$ located immediately to the left and right of the starting point, respectively. Such a process is well-defined provided $\rho^L , \rho^R > -2$~\cite[Section~2.2]{ig1}. We also recall the definition of whole-plane SLE$_\kappa(\rho)$ for $\rho  > -2$~\cite[Section~2.1]{ig4}.

\subsubsection{Liouville quantum gravity surfaces}
\label{sec-lqg-surface}

In this subsection we will give a brief review of Liouville quantum gravity (LQG) surfaces, as introduced in~\cite{shef-kpz,shef-zipper,wedges}. Such surfaces are defined for all $\gamma \in (0,2)$, but in this paper we will consider only the case when $\gamma = \sqrt{8/3}$, since this is the only case for which an LQG surface has a rigorously defined metric space structure (which we will review in Section~\ref{sec-lqg-metric}). 

A $\sqrt{8/3}$-LQG surface is an equivalence class of pairs $(D ,h)$, where $D\subset \BB C$ is an open set and $h$ is some variant of the Gaussian free field (GFF)~\cite{shef-gff,ss-contour,shef-zipper,ig1,ig4} on $D$. Two pairs $(D ,h)$ and $(\wt D , \wt h)$ are declared to be equivalent (meaning that they represent two different parameterizations of the same surface) if there is a conformal map $\phi : \wt D \rta D$ such that
\eqb \label{eqn-lqg-coord}
\wt h = h\circ \phi + Q\log |\phi'| ,\quad \op{where}\quad Q = \frac{2}{\gamma} + \frac{\gamma}{2} = \sqrt{\frac{3}{2}} + \sqrt{\frac{2}{3}}.
\eqe 
The particular choice of distribution $h$ is referred to as the \emph{embedding} of the quantum surface (into $D$). One can also define quantum surfaces with $k \in \BB N$ marked points in $D\cup\bdy D$ (with points in $\bdy D$ viewed as prime ends) by requiring the map $\phi$ in~\eqref{eqn-lqg-coord} to take the marked points of one surface to the corresponding marked points of the other. In this paper, we will often work with domains whose boundary is a simple curve, which means that a prime end is the same as a boundary point. But, we will sometimes work with domains for which this is not the case (e.g., a Jordan domain cut by a segment of a simple curve). 

It is shown in~\cite{shef-kpz} that a $\sqrt{8/3}$-LQG surface has a natural area measure $\mu_h$, which is a limit of regularized versions of $e^{\sqrt{8/3} h }\, dz$, where $dz$ denotes Lebesgue measure on $D$. Furthermore, there is a natural length measure $\nu_h$ which is defined on certain curves in $D$, including $\bdy D$ (viewed as a collection of prime ends) and SLE$_{8/3}$-type curves which are independent from $h$~\cite{shef-zipper}, and which is a limit of regularized versions of $e^{\sqrt{8/3} h/2}\, |dz|$, where $|dz|$ is the Euclidean length element. The measures $\mu_h$ and $\nu_h$ are invariant under transformations of the form~\eqref{eqn-lqg-coord} (see~\cite[Proposition~2.1]{shef-kpz} and its length measure analog). In the case of $\nu_h$, we recall here that a conformal map between simply connected domains induces a bijection between prime ends on their boundaries~\cite{pom-book}.

In this paper we will be interested in several specific types of $\sqrt{8/3}$-LQG surfaces which are defined and studied in~\cite{wedges}. Let $Q$ be as in~\eqref{eqn-lqg-coord}. For $\alpha \leq Q$, an \emph{$\alpha$-quantum wedge} is a doubly-marked quantum surface $(\BB H , h , 0, \infty)$ which can be obtained from a free-boundary GFF on $\BB H$ plus $-\alpha\log|\cdot|$ by ``zooming in near $0$" so as to fix the additive constant in a canonical way. See~\cite[Section~4.2]{wedges} for a precise definition. 
Quantum wedges in the case when $\alpha \leq Q$ are called \emph{thick wedges} because they describe surfaces homeomorphic to $\BB H$.  

The quantum wedge for $\alpha=\sqrt{8/3}$ (which is isometric to the Brownian half-plane when equipped with its LQG metric) is special since a GFF-type distribution has a $-\sqrt{8/3}\log|\cdot|$ singularity at a point sampled uniformly from its $\sqrt{8/3}$-LQG boundary length measure~\cite[Section~6]{shef-kpz}. This means that a $\sqrt{8/3}$-quantum wedge describes the local behavior of a $\sqrt{8/3}$-LQG surface near a quantum typical point on its boundary. This is analogous to the statement that the Brownian half-plane describes the local behavior of a quantum disk at a typical point of its boundary (see, e.g.,~\cite[Proposition 4.2]{gwynne-miller-uihpq}). 

For $\alpha \in (Q , Q + \sqrt{2/3})$, an \emph{$\alpha$-quantum wedge} is an ordered Poissonian collection of doubly-marked quantum surfaces, each with the topology of the disk (the two marked points correspond to the points $\pm\infty$ in the infinite strip in~\cite[Section~4.5]{wedges}). The individual surfaces are called \emph{beads} of the quantum wedge. One can also consider a single bead of an $\alpha$-quantum wedge conditioned on its quantum area and/or its left and right quantum boundary lengths. See~\cite[Section~4.4]{wedges} for more details. Quantum wedges in the case when $\alpha \in (Q , Q + \sqrt{2/3})$ are called \emph{thin wedges} (because they are not homeomorphic to $\BB H$). 

It is sometimes convenient to parameterize the set of quantum wedges by a different parameter $\frk w$, called the \emph{weight}, which is given by
\eqb \label{eqn-wedge-weight}
\frk w = \sqrt{\frac{8}{3}}\left( \sqrt{\frac{2}{3}} + Q - \alpha \right) > 0.
\eqe
The case $\alpha =\sqrt{8/3}$ corresponds to $\frk w =2$.  
The reason why the weight parameter is convenient is that it is additive under the gluing and cutting operations for quantum wedges and quantum cones studied in~\cite{wedges}.
 
For $\alpha < Q$, an \emph{$\alpha$-quantum cone}, defined in~\cite[Definition~4.10]{wedges}, is a doubly-marked quantum surface $(\BB C , h , 0, \infty)$ obtained from a whole-plane GFF plus $-\alpha\log|\cdot|$ by ``zooming in near $0$" (so as to fix the additive constant in a canonical way) in a similar manner to how a thick wedge is obtained from a free-boundary GFF with a log singularity. 
The \emph{weight} of an $\alpha$-quantum cone is given by
\eqb \label{eqn-cone-weight}
\frk w = 2 \sqrt{\frac{8}{3}} \left( Q - \alpha\right) .
\eqe
As in the quantum wedge case, the quantum cone for $\alpha = \sqrt{8/3}$ ($\frk w = 4/3$), which is isometric to the Brownian plane when equipped with its LQG metric, is special since it describes the local behavior of a $\sqrt{8/3}$-LQG surface at a typical point with respect to its LQG area measure.

A \emph{quantum sphere} is a finite-volume quantum surface $(\BB C , h)$ defined in~\cite[Definition~4.21]{wedges}, which is often taken to have fixed quantum area. One can also consider quantum spheres with one, two, or three marked points, which we take to be sampled uniformly and independently from the $\sqrt{8/3}$-LQG area measure~$\mu_h$. Note that the marked points in~\cite[Definition~4.21]{wedges} correspond to the points $\pm\infty$ in the cylinder, and are shown to be sampled uniformly and independently from $\mu_h$ in~\cite[Proposition~A.13]{wedges} when one conditions on the quantum surface structure of a quantum sphere (i.e., modulo conformal transformation). Equivalently, the law of a quantum sphere using the parameterization by the cylinder as described in \cite{wedges} is invariant under the operation of picking $x,y$ from $\mu_h$ independently and then applying a change of coordinates which takes $x,y$ to $\pm \infty$.

A \emph{quantum disk} is a finite-volume quantum surface with boundary $(\BB D , h)$ defined in~\cite[Definition~4.21]{wedges}, which can be taken to have fixed area or fixed area and fixed boundary length. A \emph{singly (resp.\ doubly) marked quantum disk} is a quantum disk together with one (resp.\ two) marked points in $\bdy \BB D$ sampled uniformly (and independently) from the $\sqrt{8/3}$-LQG boundary length measure $\nu_h$. Note that the marked points in~\cite[Definition~4.21]{wedges} correspond to the points $\pm\infty$ in the infinite strip. It is shown in~\cite[Proposition~A.8]{wedges} that the marked points are sampled uniformly from $\nu_h$, meaning that the law of the quantum disk is invariant under the operation of sampling two independent points from $\nu_h$ then applying a conformal map which sends these points to $\pm\infty$. It follows from the definitions in~\cite[Section~4.4 and 4.5]{wedges} that a doubly-marked quantum disk has the same law as a single bead of a (weight-$2/3$) $\sqrt 6$-quantum wedge if we condition on quantum area and left/right quantum boundary lengths (note that this is only true for $\gamma = \sqrt{8/3}$). 

Suppose $\frk w^-,\frk w^+ > 0$ and $\frk w = \frk w^- + \frk w^+$. It is shown in~\cite[Theorem~1.2]{wedges} that if one cuts a weight-$\frk w$ quantum wedge by an independent chordal SLE$_{8/3}(\frk w^--2 ; \frk w^+-2)$ curve (or a concatenation of such curves in the thin wedge case) then one obtains a weight-$\frk w^-$ quantum wedge and an independent-$\frk w^+$ quantum wedge. Since the $\sqrt{8/3}$-LQG length measures as viewed from either side of the curve match up~\cite{shef-zipper}, these two quantum wedges can be glued together according to quantum boundary length to recover the original weight-$\frk w$ quantum wedge. Similarly, by~\cite[Theorem~1.5]{wedges}, if one cuts a weight-$\frk w$ quantum cone by an independent whole-plane SLE$_{8/3}(\frk w-2)$ curve, then one obtains a weight-$\frk w$ quantum wedge whose left and right boundaries can be glued together according to quantum length to recover the original weight-$\frk w$ quantum cone.

\subsection{Metric gluing of general quantum wedges}
\label{sec-results}
 
In this section we will state the main results of this paper in full generality. Our first two theorems state that whenever we have a conformal welding of two or more $\sqrt{8/3}$-LQG surfaces along a simple SLE$_{8/3}$-type curve as in~\cite{shef-zipper,wedges}, the metric on the welding of the surfaces is equal to the metric space quotient of the surfaces being welded together according to boundary length. We start by addressing the case of two quantum wedges glued together to obtain another quantum wedge. 
For the statement (and at several other points in the paper) we will use the following definition.

\begin{defn} \label{def-cont}
Let $X$ be a topological space, let $Y\subset X$, and let $d$ be a metric on $Y$ which is continuous with respect to the topology on $Y$ inherited from $X$. If $A\subset \ol Y\setminus Y$, we say that $d$ \emph{extends by continuity} to $A$ if there is a metric $d'$ on $Y\cup A$ which agrees with $d$ on $Y$ and is continuous with respect to the topology on $Y\cup A$ which it inherits from $X$. 
\end{defn}

\begin{thm} \label{thm-wedge-gluing}
Let $\frk w^- , \frk w^+ > 0$ and let $\frk w := \frk w^- + \frk w^+$. If $\frk w \geq 4/3$, let $(\BB H ,h , 0 ,\infty)$ be a weight-$\frk w$ quantum wedge (recall~\eqref{eqn-wedge-weight}). If $\frk w \in(0,4/3)$, let $(\BB H , h , 0, \infty)$ be a single bead of a weight-$\frk w$ quantum wedge with area $\frk a > 0$ and left/right boundary lengths $\frk l^-, \frk l^+ > 0$. Let $\eta$ be an independent chordal SLE$_{8/3}(\frk w^- - 2;\frk w^+-2)$ from $0$ to $\infty$ in $\BB H$.  Let $W^-$ (resp.\ $W^+$) be the region lying to the left (resp.\ right) of $\eta$ and let $\mcl W^-$ (resp.\ $\mcl W^+$) be the quantum surface obtained be restricting $h$ to $W^-$ (resp.\ $W^+$). Let $\mcl U^-$ (resp.\ $\mcl U^+$) be the ordered sequence of connected components of the interior of $W^-$ (resp.\ $W^+$).  Let $\frk d_h$ be the $\sqrt{8/3}$-LQG metric induced by $h$. For $U\in \mcl U^\pm$ let $\frk d_{h|_U}$ be the $\sqrt{8/3}$-LQG metric induced by $h|_U$. Then a.s.\ each $\frk d_{h|_U}$ extends by continuity (with respect to the Euclidean topology) to $\bdy U$ and the metric space $(\BB H , \frk d_h)$ is the quotient of the disjoint union of the metric spaces $(\ol U , \frk d_{h|_U})$ for $U\in \mcl U^- \cup \mcl U^+$ under the natural identification (i.e., according to quantum length). 
\end{thm}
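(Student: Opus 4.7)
Write $\frk d_{\op{quot}}$ for the metric quotient of the disjoint union of the spaces $(\ol U,\frk d_{h|_U})$ for $U\in\mcl U^-\cup\mcl U^+$ under the $\nu_h$-length identification of the two sides of $\eta$ supplied by \cite[Theorem~1.9]{wedges}. The goal is to show $\frk d_h=\frk d_{\op{quot}}$ almost surely. The easy direction $\frk d_h\le\frk d_{\op{quot}}$ follows from locality of the Miller--Sheffield $\sqrt{8/3}$-LQG metric: for each $U\in\mcl U^-\cup\mcl U^+$ and $x,y\in\ol U$, any rectifiable path in $\ol U$ realizing $\frk d_{h|_U}(x,y)$ is also a valid path in $(\BB H,\frk d_h)$ with the same $\frk d_h$-length, so $\frk d_{h|_U}(x,y)\ge\frk d_h(x,y)$; chaining this inequality through admissible quotient-concatenations and applying the triangle inequality in $\frk d_h$ gives $\frk d_h\le\frk d_{\op{quot}}$.

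\textbf{Reverse inequality.} For $\frk d_h\ge\frk d_{\op{quot}}$, let $\gamma:[0,T]\to\BB H$ be a $\frk d_h$-geodesic from $x$ to $y$, parametrized by $\frk d_h$-arc length. The set $\gamma^{-1}(\eta)\subset[0,T]$ is closed, so its complement is a countable disjoint union of open intervals $(I_j)_j$. By continuity of $\gamma$ and the fact that $\eta$ separates $\BB H$, each $\gamma(I_j)$ lies in a single connected component $U_j\in\mcl U^-\cup\mcl U^+$, and by locality the $\frk d_h$-length of $\gamma|_{I_j}$ coincides with its $\frk d_{h|_{U_j}}$-length. The endpoints on $\eta$ of consecutive excursion arcs agree under the $\nu_h$-length identification by the welding statement in \cite[Theorem~1.9]{wedges}. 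Hence, \emph{provided} the Lebesgue measure $|\gamma^{-1}(\eta)|$ is zero, a finite-truncation approximation produces admissible paths in the quotient of $\frk d_{\op{quot}}$-length arbitrarily close to $T=\frk d_h(x,y)$, giving $\frk d_{\op{quot}}(x,y)\le\frk d_h(x,y)$.

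\textbf{Main obstacle.} The heart of the proof is the ``no-pinning'' statement $|\gamma^{-1}(\eta)|=0$, i.e.\ a $\sqrt{8/3}$-LQG geodesic spends no $\frk d_h$-length along the interface $\eta$. I would prove this by contradiction: if $\gamma$ traced a positive-$\frk d_h$-length subarc of $\eta$, I would construct a strictly shorter path by detouring through the quantum surfaces cut out by $\eta$ on either side. The key quantitative input comes from Section~\ref{sec-disk-estimate}, which provides $\frk d_h$-diameter bounds for Brownian disks in terms of their boundary lengths. Combined with the isometry of quantum disks with Brownian disks \cite[Corollary~1.5]{lqg-tbm2} and with \cite[Theorem~1.9]{wedges} (which identifies tubular neighborhoods of $\eta$ inside $W^-$ and $W^+$ with Poissonian collections of independent quantum disks of specified boundary lengths), one carries out a Vitali-covering detour along $\gamma\cap\eta$: each small piece of $\gamma$ along $\eta$ is rerouted through a cut-out bubble whose $\frk d_h$-diameter is controlled by its $\nu_h$-boundary length. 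Arranging the diameter-versus-length comparison so that the total detour cost beats the $\frk d_h$-length saved is the main analytic challenge, and is where the Brownian-disk estimates particular to $\sqrt{8/3}$-LQG are essential.
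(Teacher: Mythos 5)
Your easy direction and the general shape of the rerouting idea are right, but there are two genuine gaps in the reverse inequality. First, the step ``provided $|\gamma^{-1}(\eta)|=0$, a finite-truncation approximation produces admissible paths of quotient-length close to $T$'' does not close. The set $\gamma^{-1}(\eta)$ can be an uncountable nowhere-dense set even if it is Lebesgue-null, so after retaining finitely many excursion intervals you must bridge consecutive retained excursions by finite admissible chains between two points of $\eta$ that are close in $\frk d_h$ --- and bounding the \emph{quotient} distance between nearby points of $\eta$ is precisely the statement being proved. This is the failure mode illustrated by the paper's example in Section~\ref{sec-gluing-remarks}: knowing the internal metrics agree and the interface is a nice simple curve is not enough. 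Relatedly, ``no pinning'' ($|\gamma^{-1}(\eta)|=0$) is not the right intermediate target; the paper never proves it as a step, and it would not suffice.

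Second, and more importantly, the quantitative heart of the argument is missing. The paper's proof bounds the \emph{number} of quantum-length-$\delta^2$ segments of $\eta$ that the geodesic meets by $O_\delta(\delta^{-1+\alpha})$ for some $\alpha>0$; each such crossing is then rerouted by a one-sided path of length $O_\delta(\delta\,(\log\delta^{-1})^2)$ using the boundary H\"older estimate (Lemma~\ref{prop-disk-bdy-holder}, via condition~\ref{item-length-upper} of Lemma~\ref{lem-metric-reg}), so the total detour cost is $O_\delta(\delta^{\alpha}(\log\delta^{-1})^2)\to 0$. You have the second ingredient (diameter versus boundary length) but not the first. The crossing count is nontrivial: the total quantum length of $\eta$ in the relevant region gives $O(\delta^{-2})$ segments, so hitting all of them would make the detour cost blow up like $\delta^{-1}$. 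The paper gets the needed saving from two separate lemmas: a deterministic-on-$G_C$ bound (Lemma~\ref{prop-sle-ball-count}) showing a $\frk d_h$-ball of radius $\delta^{1+v}$ meets at most $\delta^{-v}$ segments (proved by an area-counting argument with disjoint one-sided balls in the complementary components), and a probabilistic estimate (Lemma~\ref{prop-sle-intersect-mean}) exploiting the independence of $\eta$ from $(h,\gamma_{z_1,z_2})$ together with the polynomial SLE hitting estimate $\BB P[\eta\cap B_\ep(w)\neq\emptyset]\preceq \ep^{\alpha_0}$. Your proposal contains neither, and without the independence trick in particular there is no known route to the $\delta^{-1+\alpha}$ bound. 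You are also missing the preliminary reductions (geodesics between quantum-typical points a.s.\ avoid $\BB R\cup\{\infty\}$, Lemma~\ref{prop-geodesic-bdy}, which is what licenses the Brownian-disk comparisons, and the final density argument upgrading typical pairs to all pairs), and your reading of \cite[Theorem~1.9]{wedges} as giving ``Poissonian collections of quantum disks in tubular neighborhoods of $\eta$'' is not what that theorem says --- it gives two independent quantum wedges, with a bead decomposition only in the thin case.
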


See Figure~\ref{fig-thick-gluing} (resp.\ Figure~\ref{fig-thin-gluing}) for an illustration of the statement of Theorem~\ref{thm-wedge-gluing} in the case when $\frk w^- , \frk w^+ \geq 4/3$ (resp.\ $\frk w^- , \frk w^+ < 4/3$). The reason why the metrics $\frk d_{h|_U}$ for $U\in \mcl U^-\cup \mcl U^+$ extend continuously to $\bdy U$ is explained in Lemma~\ref{prop-metric-bdy-wedge} below. 

In the setting of Theorem~\ref{thm-wedge-gluing},~\cite[Theorem~1.2]{wedges} implies that the quantum surfaces $\mcl W^-$ and $\mcl W^+$ are independent, the former is a weight-$\frk w^-$ quantum wedge (or a collection of beads of such a wedge if $\frk w  < 4/3$), and the latter is a weight-$\frk w^+$ quantum wedge (or  a collection of beads of such a wedge if $\frk w < 4/3$). 
Note in particular that $\mcl U^- = W^-$ if $\frk w^- \geq 4/3$ and $\mcl U^-$ is countably infinite if $\frk w^- < 4/3$, and similarly for $\mcl U^+$. 
  
The wedges (or subsets of wedges) $\mcl W^-$ and $\mcl W^+$ are glued together according to $\sqrt{8/3}$-LQG boundary length~\cite[Theorem~1.3]{shef-zipper}. 
Hence Theorem~\ref{thm-wedge-gluing} says that one can metrically glue a (subset of a) weight-$\frk w^-$ quantum wedge and a (subset of a) weight$-\frk w^+$ quantum wedge together according to quantum boundary length to get a (bead of a) weight-$\frk w$ quantum wedge. 

We note that Theorem~\ref{thm-brownian-half-plane-wedge} is a special case of Theorem~\ref{thm-wedge-gluing} since the Brownian half-plane is isometric to the weight-$2$ quantum wedge~\cite[Proposition 1.10]{gwynne-miller-uihpq}. 
More generally, the quotient metric space in Theorem~\ref{thm-wedge-gluing} is an LQG surface so locally looks the same as a Brownian surface, even near the gluing interface. In fact, the quotient metric $\frk d_h$ is independent from the gluing interface $\eta$. Hence if one takes a metric quotient of two quantum wedges, it is not possible to recover the two original wedges from the quotient metric space (one would need to see the gluing interface as well to do this). 

By the local absolute continuity of different $\sqrt{8/3}$-LQG surfaces near points of their boundaries, Theorem~\ref{thm-wedge-gluing} implies that a metric space quotient of any pair of independent $\sqrt{8/3}$ (equivalently Brownian) surfaces with boundary glued together according to boundary length also locally looks like a Brownian surface. This applies in particular if one metrically glues together two Brownian disks according to boundary length. We emphasize that it is not at all clear from the definition of a Brownian disk that gluing two such objects together produces something which locally looks like a Brownian disk near the gluing interface --- one needs to use LQG theory to obtain this fact.

Our next theorem concerns a quantum wedge glued to itself to obtain a quantum cone and is illustrated in the middle of Figure~\ref{fig-thick-gluing}.

\begin{thm}
\label{thm-cone-gluing}
Let $\frk w \geq 4/3$, let $(\BB C , h , 0, \infty)$ be a weight-$\frk w$ quantum cone (recall~\eqref{eqn-cone-weight}), and let $\frk d_h$ be the $\sqrt{8/3}$-LQG  metric induced by $h$. Let $\eta$ be a whole-plane SLE$_{8/3}(\frk w-2)$ process from $0$ to $\infty$. Let $U = \BB C\setminus \eta$ and let $\frk d_{h|_U}$ be the $\sqrt{8/3}$-LQG metric induced by $h|_U$. Then a.s.\ $\frk d_{h|_U}$ extends by continuity (with respect to the Euclidean topology) to $\bdy U$, viewed as a collection of prime ends, and a.s.\ $(\BB C , \frk d_h)$ is the metric quotient of $(U , \frk d_{h|_U})$ under the natural identification of the two sides of $\eta$. 
\end{thm}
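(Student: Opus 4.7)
The plan is to reduce Theorem~\ref{thm-cone-gluing} to the already-established Theorem~\ref{thm-wedge-gluing} via local absolute continuity. By \cite[Theorem~1.12]{wedges}, the quantum surface $(U, h|_U)$ is a weight-$\frk w$ quantum wedge $\mcl W$, with the identification of the two sides of $\eta$ according to quantum length corresponding to the identification of the two boundary arcs of $\mcl W$. Write $\wt{\frk d}$ for the quotient metric on $\BB C$ obtained from $(\ol U, \frk d_{h|_U})$ under this identification; the goal is to show $\wt{\frk d} = \frk d_h$ almost surely.

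The upper bound $\frk d_h \leq \wt{\frk d}$ is routine. Given a chain $x = a_1, b_1, a_2, b_2, \dots, a_k, b_k = y$ in $\ol U$ with $b_i$ identified to $a_{i+1}$ in $\BB C$, realize $\frk d_{h|_U}(a_i, b_i)$ by a near-minimizing path in $\ol U$ and concatenate: the resulting path is continuous in $\BB C$, and its $L_h$-length equals the sum of arc lengths, which can be taken arbitrarily close to $\wt{\frk d}(x, y)$. The continuity of the metric on $\bdy U$ established in Lemma~\ref{prop-metric-bdy} is used here to ensure that the concatenation can be carried out at boundary points.

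The lower bound $\frk d_h \geq \wt{\frk d}$ is the main content. Given a near-geodesic $\pi$ in $(\BB C, \frk d_h)$ from $x$ to $y$, one would like to decompose $\pi$ at its crossings of $\eta$ into arcs $\pi_j \subset \ol U$ with endpoints identified across $\eta$, so that $\sum_j \frk d_{h|_U}(\text{endpts of } \pi_j) \leq \sum_j L_h(\pi_j) = L_h(\pi)$. The decomposition is clean provided $\pi$ hits $\eta$ in a set of zero $L_h$-measure and each crossing is transversal (approaches from one side, leaves from the other); both properties should hold for near-geodesics. To establish these regularity properties and to control the decomposition, use local absolute continuity: for any $z \in \eta \setminus \{0, \infty\}$ and small $r > 0$, the joint law of $(h, \eta)$ on $B_r(z)$ is mutually absolutely continuous with respect to the analogous local picture in the chordal wedge-gluing setup of Theorem~\ref{thm-wedge-gluing}, obtained by applying a conformal map sending a neighborhood of $z$ into a bounded subset of $\BB H$ bounded away from $\{0, \infty\}$. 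Theorem~\ref{thm-wedge-gluing} provides the metric identity (and hence the needed regularity of near-geodesics across the interface) in the wedge setup; absolute continuity transfers the qualitative properties to each $B_r(z)$ in the cone setup. A finite covering of the part of $\pi \cap \eta$ lying inside a compact annulus around $0$ then bounds $\wt{\frk d}(x, y)$ above by $L_h(\pi) + \epsilon$.

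The main obstacle will be controlling the behavior of near-geodesics near the singular points $0$ and $\infty$, where absolute continuity with the wedge picture degenerates. The remedy is a truncation argument: fix a small annulus $A = \{r_1 \leq |\cdot| \leq r_2\}$ and argue that with high probability, any near-geodesic from $x$ to $y$ (with $x,y$ fixed) remains inside $A$, so the cone-to-wedge comparison only needs to be applied away from the singularities; then let $r_1 \to 0$ and $r_2 \to \infty$. A secondary technical issue is that near-geodesics may cross $\eta$ infinitely often, but since $\eta$ is a simple SLE$_{8/3}$-type curve and the geodesic has finite length, one expects (as in the wedge case) that the crossing set is totally disconnected and decomposes the path into a countable collection of arcs whose total $L_h$-length equals $L_h(\pi)$. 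Verifying this rigorously, uniformly over a class of near-geodesics, is the delicate part of the argument; however, the corresponding statement is effectively established in the course of proving Theorem~\ref{thm-wedge-gluing}, and local absolute continuity should suffice to transfer it.
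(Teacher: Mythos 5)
Your route is genuinely different from the paper's. The paper does not deduce Theorem~\ref{thm-cone-gluing} from Theorem~\ref{thm-wedge-gluing} by absolute continuity; it simply re-runs the entire wedge argument (the regularity event $G_C$ of Lemma~\ref{lem-metric-reg}, the crossing-count Lemmas~\ref{prop-sle-diam-lower}--\ref{prop-sle-intersect-mean}, and the detour construction of Lemma~\ref{prop-gluing-uniform}) with Lemma~\ref{prop-geodesic-plane} in place of Lemma~\ref{prop-geodesic-bdy}, noting it is simpler because there is only one complementary component. The shape of your reduction (localize, transfer by absolute continuity, patch along a geodesic confined to a fixed annulus) is exactly how the paper gets Theorem~\ref{thm-peanosphere-gluing-finite} from Theorem~\ref{thm-peanosphere-gluing} --- but there the interface is literally the same curve in both couplings, so only the fields need comparing.

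The gap is in your lower bound. Decomposing a near-geodesic $\pi$ at its crossings of $\eta$ into arcs $\pi_j\subset\ol U$ and asserting $\sum_j \frk d_{h|_U}(a_j,b_j)\leq \sum_j \op{len}(\pi_j;\frk d_h)$ requires $\op{len}(\pi_j;\frk d_{h|_U})\leq\op{len}(\pi_j;\frk d_h)$ for arcs that touch $\bdy U$. The known inequality goes the other way ($\frk d_{h|_U}\geq\frk d_h$ on $\ol U$, Remark~\ref{remark-internal-compare}), the paper explicitly declines to prove that $\frk d_{h|_U}$ agrees with the internal metric of $\frk d_h$ on $U\cup\bdy U$, and the example in Section~\ref{sec-gluing-remarks} shows this step can genuinely fail for a metric glued along a simple curve: a path hugging the interface can have small two-sided length while its endpoints are far apart one-sidedly. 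Transversality and $\pi\cap\eta$ having measure zero do not address this; the inequality you need is essentially equivalent to the theorem. Your fallback --- transfer "the corresponding statement" from the wedge case --- also has two unsupplied steps: (i) Theorem~\ref{thm-wedge-gluing} is a global statement, so you must extract from its \emph{proof} a local, almost-surely-for-all-balls statement (analogous to~\eqref{eqn-sphere-path-approx}) that is measurable with respect to the restriction of $(h,\eta)$ to a ball; and (ii) you must prove mutual absolute continuity of the local laws of the interfaces themselves --- whole-plane $\op{SLE}_{8/3}(\frk w-2)$ near a bulk point versus chordal $\op{SLE}_{8/3}(\frk w^--2;\frk w^+-2)$ near an interior point of $\BB H$ --- which is true but not automatic, since the trace of the curve inside a fixed ball far from its starting point is not a local functional of a field restricted to a neighborhood of that ball, so one must argue via stopping times and conditional laws with differing force-point configurations. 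Supplying these two ingredients is comparable in effort to re-running the wedge argument, which is what the paper does.
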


\begin{figure}[ht!]
 \begin{center}
\includegraphics[scale=.8]{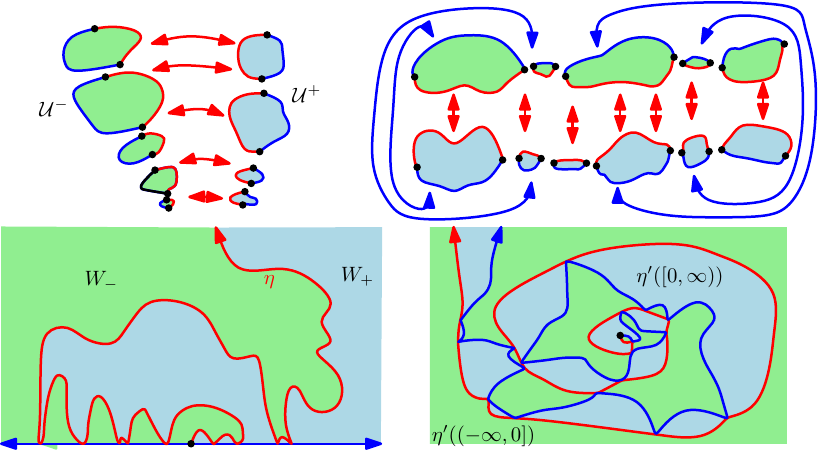} 
\caption[Gluing together thin quantum wedges]{
\textbf{Left:} Illustration of the statement of Theorem~\ref{thm-wedge-gluing} in the case when $\frk w \geq 4/3$ and $\frk w^- , \frk w^+ < 4/3$. The metric space $(\BB H , \frk d_h)$, which is a weight-$\frk w$ quantum wedge, is the metric quotient of the disjoint union of the metric spaces $(U , \frk d_{h|_U})$ for $U \in \mcl U^-\cup\mcl U^+$, which are the beads of a pair of independent quantum wedges of weights $\frk w^-$ and $\frk w^+$, glued together according to quantum boundary length. The gluing interface is a chordal SLE$_{8/3}(\frk w_1 - 2;\frk w_2-2)$ curve $\eta$ from $0$ to $\infty$ in $\BB H$, which a.s.\ hits both $(-\infty, 0]$ and $[0,\infty)$. 
\textbf{Right:} Illustration of the statement of Theorem~\ref{thm-peanosphere-gluing}. The boundary of a space-filling SLE$_6$ curve stopped when it hits~$0$ divides $\BB C$ into two independent weight-$2/3$ quantum wedges (green and blue), whose beads are independent quantum disks (Brownian disks) when we condition on boundary length. The weight $4/3$ quantum cone metric space $(\BB C , \frk d_h)$, which is a Brownian plane, is the metric quotient of these Brownian disks glued together according to boundary length.  
}\label{fig-thin-gluing}
\end{center}
\end{figure}

In the setting of Theorem~\ref{thm-cone-gluing},~\cite[Theorem~1.5]{wedges} implies that the surface $(U , h|_{U} , 0 , \infty)$ has the law of a weight-$\frk w$ quantum wedge. Furthermore, the boundary arcs of this quantum wedge lying to the left and right of $0$ are glued together according to $\sqrt{8/3}$-LQG boundary length~\cite[Theorem~1.3]{shef-zipper}. Hence Theorem~\ref{thm-cone-gluing} implies that one can metrically glue the left and right boundaries of a weight-$\frk w$ quantum wedge together according to quantum length to get a weight-$\frk w$ quantum cone. Similar absolute continuity remarks to the ones made after the statement of Theorem~\ref{thm-wedge-gluing} apply in the setting of Theorem~\ref{thm-cone-gluing}. 
 
Requiring that $\frk w \geq 4/3$ in Theorem~\ref{thm-cone-gluing} is equivalent to requiring that this wedge is thick, equivalently the curve $\eta$ is simple and the set $U$ is connected. We expect that one can also metrically glue the beads of a weight-$\frk w$ quantum wedge for $\frk w \in (0,4/3)$ together according to quantum length to get a weight-$\frk w$ quantum cone, but will not treat this case in the present work.
 
Note that Theorem~\ref{thm-brownian-half-plane-cone} is the special case of Theorem~\ref{thm-cone-gluing} when $\frk w = 2$. 
 
Iteratively applying Theorems~\ref{thm-wedge-gluing} and~\ref{thm-cone-gluing} can give us metric gluing statements for when we cut a quantum surface by multiple SLE$_{8/3}$-type curves. One basic application of this fact is Theorem~\ref{thm-brownian-half-plane-2side}, in which one identifies the entire boundary of the two Brownian half-plane instances together. This theorem is obtained by applying Theorem~\ref{thm-brownian-half-plane-wedge} to glue the positive boundaries of the two Brownian half-planes and then applying Theorem~\ref{thm-cone-gluing} to glue the two sides of the resulting weight-$4$ quantum wedge.

Another result which can be obtained from multiple applications of Theorems~\ref{thm-wedge-gluing} and~\ref{thm-cone-gluing} is a metric gluing statement in the setting of the peanosphere construction~\cite[Theorem~1.9]{wedges}, which allows us to express the Brownian plane or Brownian map as the metric space quotient of a countable collection of Brownian disks glued together along their boundaries. Before stating this result, we need to briefly recall the definition of space-filling SLE$_6$. 

\emph{Whole-plane space-filling SLE$_6$ from $\infty$ to $\infty$} is a variant of SLE$_6$ which travels from $\infty$ to $\infty$ in $\BB C$ and iteratively fills in bubbles as it disconnects them from $\infty$ (so in particular is not a Loewner evolution). As explained in~\cite[Footnote 4]{wedges}, whole-plane space-filling SLE$_6$ can be sampled as follows.
\begin{enumerate}
\item Let $\eta^L$ and $\eta^R$ be the flow lines of a whole-plane GFF started from $0$ with angles $\pi/2$ and $-\pi/2$, respectively, in the sense of~\cite{ig4}. Note that by~\cite[Theorem~1.1]{ig4}, $\eta^L$ has the law of a whole-plane SLE$_{8/3}(-2/3)$ from $0$ to $\infty$ and by~\cite[Theorem~1.11]{ig4}, the conditional law of $\eta^R$ given $\eta^L$ is that of a concatenation of independent chordal SLE$_{8/3}(-4/3; -4/3)$ processes in the connected components of $\BB C\setminus \eta^L$. 
\item Conditional on $\eta^L$ and $\eta^R$, concatenate a collection of independent chordal space-filling SLE$_{6}$ processes, one in each connected component of $\BB C\setminus (\eta^L \cup \eta^R)$. Such processes are defined in~\cite[Sections~1.1.3 and~4.3]{ig4} and can be obtained from ordinary chordal SLE$_6$ by, roughly speaking, iteratively filling in the bubbles it disconnects from $\infty$.
\end{enumerate}

\begin{thm} \label{thm-peanosphere-gluing} 
Let $(\BB C , h , 0, \infty)$ be a $\sqrt{8/3}$-quantum cone (weight-$4/3$) and let $\eta'$ be an independent whole-plane space-filling SLE$_6$ parameterized by quantum mass with respect to $h$ and satisfying $\eta'(0) =0$. Let $\mcl U^-$ (resp.\ $\mcl U^+$) be the set of connected components of the interior of $\eta'((-\infty ,0])$ (resp.\ $\eta'([0,\infty))$). Let $\frk d_h$ be the $\sqrt{8/3}$-LQG metric induced by $h$ and for $U\in \mcl U^-\cup \mcl U^+$ let $\frk d_{h|_U}$ be the $\sqrt{8/3}$-LQG metric induced by $h|_U$. Then a.s.\ each $\frk d_{h|_U}$ extends by continuity (with respect to the Euclidean topology) to $\bdy U$ and $(\BB C , \frk d_h)$ is the metric quotient of the disjoint union of the metric spaces $(\ol U , \frk d_{h|_U})$ for $U\in\mcl U^-\cup \mcl U^+$ under the natural identification. 
\end{thm}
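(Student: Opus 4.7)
The plan is to iterate Theorems~\ref{thm-cone-gluing} and~\ref{thm-wedge-gluing}, peeling off one curve at a time using the flow-line construction of whole-plane space-filling $\op{SLE}_6$ recalled in Section~\ref{sec-sle}. Let $\eta^L$ and $\eta^R$ be the whole-plane GFF flow lines from $0$ with angles $\pi/2$ and $-\pi/2$ used in that construction, so that conditional on $(\eta^L,\eta^R)$ the curve $\eta'$ fills in each connected component of $\BB C\setminus(\eta^L\cup\eta^R)$ by an independent chordal space-filling $\op{SLE}_6$. I will use throughout that by~\cite[Theorem~1.13]{wedges} (the peanosphere setup on a quantum cone) the driving GFF can be taken to be independent of $h$.

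First I would peel off $\eta^L$. By~\cite[Theorem~1.1]{ig4}, $\eta^L$ has the law of a whole-plane $\op{SLE}_{8/3}(-2/3)$ from $0$ to $\infty$, independent of $h$. Since the weight of a $\sqrt{8/3}$-quantum cone is $\frk w = 4/3$ and $-2/3 = \frk w - 2$, Theorem~\ref{thm-cone-gluing} applies: the metric space $(\BB C,\frk d_h)$ is the quotient of $(U_1,\frk d_{h|_{U_1}})$ with $U_1 := \BB C\setminus\eta^L$ under the natural identification of the two sides of $\eta^L$; and by~\cite[Theorem~1.12]{wedges}, $(U_1, h|_{U_1}, 0, \infty)$ is a weight-$4/3$ quantum wedge.

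Next I would peel off $\eta^R$ inside $U_1$. Conditional on $\eta^L$, the concatenation of the excursions of $\eta^R$ is a chordal $\op{SLE}_{8/3}(-4/3;-4/3)$ from $0$ to $\infty$ in $U_1$, independent of $h|_{U_1}$. Since $-4/3 = 2/3 - 2$ and $2/3 + 2/3 = 4/3$, Theorem~\ref{thm-wedge-gluing} applied with $\frk w^- = \frk w^+ = 2/3$ (after conformally mapping $U_1$ to $\BB H$) yields that $(U_1, \frk d_{h|_{U_1}})$ is the metric quotient of the disjoint union of $(\ol U, \frk d_{h|_U})$ for $U$ ranging over the components of $U_1\setminus\eta^R = \BB C\setminus(\eta^L\cup\eta^R)$, glued according to quantum length along $\eta^R$. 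By the space-filling construction in Section~\ref{sec-sle}, these components are precisely $\mcl U^- \cup \mcl U^+$: each such bubble is traversed during a single excursion of $\eta'$, and the ones visited during $(-\infty,0]$ (resp.\ $[0,\infty)$) make up $\mcl U^-$ (resp.\ $\mcl U^+$). Composing the quotients from the two steps then identifies all of $\eta^L\cup\eta^R$ according to quantum length, which is exactly the natural identification in the statement.

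The main obstacle is bookkeeping rather than new estimates: one has to verify that the two-step metric quotient (first along $\eta^L$, then along $\eta^R$) really equals the one-step quotient of the disjoint union of all beads under the natural identification. This requires some care since the boundary of a typical bead $U\in\mcl U^\pm$ consists of interlaced arcs of $\eta^L$ and $\eta^R$, and one must check that the two successive identifications combine to give the single ``glue by quantum length'' identification along $\eta^L\cup\eta^R$. The remainder of the argument consists of matching weight parameters and importing~\cite[Theorems~1.12 and~1.13]{wedges} to put the flow-line/peanosphere structure on the cone in a form where Theorems~\ref{thm-cone-gluing} and~\ref{thm-wedge-gluing} apply directly.
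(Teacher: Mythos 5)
Your proposal is correct and follows essentially the same route as the paper: peel off $\eta^L$ (whole-plane $\op{SLE}_{8/3}(-2/3)$, weight $\frk w=4/3$) via Theorem~\ref{thm-cone-gluing} together with \cite[Theorem~1.12]{wedges}, then peel off $\eta^R$ (conditionally a chordal $\op{SLE}_{8/3}(-4/3;-4/3)$ in $\BB C\setminus\eta^L$) via Theorem~\ref{thm-wedge-gluing} with $\frk w^\pm = 2/3$, and compose the two quotients. The bookkeeping point you flag is handled in the paper simply by noting that the second quotient identifies everything except points on $\eta^L$, which the first quotient already identifies.
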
 
 
By~\cite[Corollary 1.5]{lqg-tbm2}, the metric space $(\BB C , \frk d_h)$ in Theorem~\ref{thm-peanosphere-gluing} is isometric to the Brownian plane. Furthermore, by~\cite[Theorems 1.2 and 1.5]{wedges}, each of the surfaces $(U , h|_U)$ has the law of a single bead of a weight-$2/3$ quantum wedge, which has the same law as a quantum disk. Hence each of the metric spaces $(U , \frk d_{h|_U})$ is isometric to a Brownian disk. Therefore Theorem~\ref{thm-peanosphere-gluing} expresses the Brownian plane as a metric space quotient of a countable collection of Brownian disks glued together according to boundary length.  
  
The following is an analog of Theorem~\ref{thm-peanosphere-gluing} in the setting of the finite-volume peanosphere construction~\cite[Theorem~1.1]{sphere-constructions}. 
  
\begin{thm} \label{thm-peanosphere-gluing-finite} 
Let $(\BB C , h ,\infty)$ be a singly marked unit area $\sqrt{8/3}$-quantum sphere and let $\eta'$ be an independent whole-plane space-filling SLE$_6$ parameterized by quantum mass with respect to $h$. Let $\BB t\in [0,1]$ be sampled uniformly from Lebesgue measure, independent from everything else. Let $\mcl U^-$ (resp.\ $\mcl U^+$) be the set of connected components of the interior of $\eta'([0,\BB t])$ (resp.\ $\eta'([\BB t  ,1]$). Let $\frk d_h$ be the $\sqrt{8/3}$-LQG metric induced by $h$ and for $U\in \mcl U^-\cup \mcl U^+$ let $\frk d_{h|_U}$ be the $\sqrt{8/3}$-LQG metric induced by $h|_U$. Then a.s.\ each $h|_U$ extends by continuity (with respect to the Euclidean topology) to $\bdy U$ and $(\BB C , \frk d_h)$ is the metric quotient of the disjoint union of the metric spaces $(\ol U , \frk d_{h|_U})$ for $U\in\mcl U^-\cup \mcl U^+$ under the natural identification. 
\end{thm}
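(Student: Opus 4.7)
The plan is to reduce Theorem~\ref{thm-peanosphere-gluing-finite} to Theorem~\ref{thm-peanosphere-gluing} by a local absolute continuity argument. The two statements assert the same metric gluing identity for the analogous geometric objects; the only difference is that Theorem~\ref{thm-peanosphere-gluing} concerns a $\sqrt{8/3}$-quantum cone, whereas Theorem~\ref{thm-peanosphere-gluing-finite} concerns a unit-area quantum sphere. Since these two field laws are mutually absolutely continuous in any bounded neighborhood avoiding the marked point $\infty$, and since in both cases the conditional law of $\eta'$ given $h$ is that of an independent whole-plane space-filling SLE$_6$ reparameterized by $\mu_h$-mass, the metric gluing identity should transfer.

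Write $\frk d_h^{\mathrm{quot}}$ for the quotient metric on the right-hand side. The inequality $\frk d_h^{\mathrm{quot}} \geq \frk d_h$ is immediate: any chain $z = z_1, w_1 \sim z_2, w_2 \sim \dots \sim w_n = w$ of points realizing an approximate quotient distance yields a path in $(\BB C, \frk d_h)$ of length at most $\sum_i \frk d_{h|_{U_i}}(z_i, w_i)$. The substance is the reverse inequality. Fix a countable dense set $\mcl Z \subset \BB C \setminus \{\infty\}$ and $z, w \in \mcl Z$; it suffices to show that for every $\varepsilon > 0$, a.s.\ $\frk d_h^{\mathrm{quot}}(z, w) \leq \frk d_h(z, w) + \varepsilon$. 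Since $(\BB C, \frk d_h)$ is a.s.\ isometric to the Brownian map and hence compact, and since $\infty$ is a single point, one can find (random) bounded open sets $V \subset V' \subset \BB C$ with $\infty \notin \overline{V'}$, with some $\frk d_h$-geodesic from $z$ to $w$ lying in $V$, and with every component $U \in \mcl U^\pm$ meeting $V$ entirely contained in $V'$. Call this event $\mcl E_{V,V'}$; then $\BB P\bigl[\bigcup_{V,V'} \mcl E_{V,V'}\bigr] = 1$, and on $\mcl E_{V,V'}$ the event $\{\frk d_h^{\mathrm{quot}}(z,w) \leq \frk d_h(z,w) + \varepsilon\}$ is a measurable function of the pair $(h|_{V'}, \eta'|_{V'})$, because the entire collection of components (and boundary-length identifications) relevant to paths from $z$ to $w$ staying in $V$ lies inside $V'$.

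The joint law of $(h|_{V'}, \eta'|_{V'})$ is mutually absolutely continuous between the sphere setup of Theorem~\ref{thm-peanosphere-gluing-finite} and the cone setup of Theorem~\ref{thm-peanosphere-gluing}: local absolute continuity of the fields is standard (both are locally Radon--Nikodym equivalent to a free-boundary GFF on $V'$ away from their distinguished points), and given $h$, the conditional law of $\eta'$ restricted to $V'$ is the same in both cases. The reparameterization issue---a uniform time shift by $\BB t$ in the sphere case versus the rigid normalization $\eta'(0) = 0$ in the cone case---is harmless because the metric gluing statement does not depend on the choice of time origin. Therefore, on $\mcl E_{V,V'}$, Theorem~\ref{thm-peanosphere-gluing} yields the desired inequality in the cone setup, and by absolute continuity it holds a.s.\ in the sphere setup as well. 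Taking a union over a countable family $V_n \subset V'_n$ exhausting $\BB C \setminus \{\infty\}$ and over rational $\varepsilon > 0$ gives the theorem.

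The main obstacle is verifying the measurability claim in the previous paragraph, i.e.\ that on $\mcl E_{V,V'}$ the identifications along $\eta'$ relevant to paths in $V$ can be read off from $\eta'|_{V'}$ alone, rather than requiring knowledge of the global traversal order of $\eta'$. This is the very reason for enlarging from $V$ to $V'$: by requiring every component touching $V$ to lie in $V'$, we ensure that each identification (pairing up two points of equal quantum boundary length on opposite sides of an $\eta'$-arc in a component touching $V$) is between points both inside $V'$, hence determined by $\eta'|_{V'}$ together with its quantum-length parameterization. Establishing that the event $\mcl E_{V,V'}$ indeed has the claimed full-measure union---which requires control on the Euclidean diameter of the components of $\mcl U^\pm$ near a fixed $\frk d_h$-geodesic---is expected to follow from regularity properties of space-filling SLE$_6$ and the $\sqrt{8/3}$-LQG metric that are developed elsewhere in the paper.
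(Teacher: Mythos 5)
Your strategy -- transferring Theorem~\ref{thm-peanosphere-gluing} to the sphere by local absolute continuity with the $\sqrt{8/3}$-quantum cone -- is exactly the paper's strategy, but your localization step has a genuine gap. To prove $\frk d_h^{\mathrm{quot}}(z,w)\leq \frk d_h(z,w)+\varepsilon$ you must produce, in the sphere setting, a near-optimal chain of one-sided geodesics, and the only control you have on such a chain (imported from the cone via Theorem~\ref{thm-peanosphere-gluing}) is that its total length is at most $\frk d_h(z,w)+\varepsilon$; hence it is only known to stay in the metric ball $B_{\frk d_h(z,w)+\varepsilon}(z;\frk d_h)$, not in your set $V$ containing the geodesic. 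With positive probability this ball contains $\infty$ (and the second marked point needed to match the cone), in which case no bounded $V'$ avoiding the marked points can contain all the components the chain visits, and the absolute-continuity transfer cannot be applied in one shot. The paper resolves this by first proving a genuinely \emph{local} statement -- for $z,w$ with $\frk d_h(z,w)\leq\frac14\,\frk d_h(z,\partial W_R)$ the near-optimal chain is forced into $W_R$, and both this condition and the chain construction are measurable with respect to $(h|_{A_R},\mcl U_R)$ -- and then \emph{subdividing} the geodesic $\gamma$ into $M$ pieces each short relative to its distance to $\partial W_{R'}$, applying the local statement with $\varepsilon/M$ to each piece. Your proposal omits the subdivision entirely, and this is not a removable technicality.

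Two secondary points. First, you work with a deterministic countable dense set of points $z,w$, but the geodesic uniqueness and avoidance of the marked points (Lemma~\ref{prop-geodesic-plane}) is only available for points sampled from $\mu_h$; the paper proves the identity for quantum-typical points and then extends by the density argument from the end of the proof of Theorem~\ref{thm-wedge-gluing}. Second, the component decompositions $\mcl U^\pm$ depend on the splitting point ($\eta'(\BB t)$ for the sphere, $\eta'(0)=0$ for the cone), so before comparing the two laws one must re-embed the sphere so that $\eta'(\BB t)=0$ and compare the resulting \emph{doubly} marked sphere with the cone on deterministic annuli $A_R$; your remark that the time shift is ``harmless'' skips the step that actually aligns the two component structures, and your random sets $V,V'$ would in any case need to be replaced by deterministic ones before absolute continuity of the field laws can be invoked.
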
 
 
By~\cite[Corollary 1.5]{lqg-tbm2}, the metric space $(\BB C , \frk d_h)$ in Theorem~\ref{thm-peanosphere-gluing-finite} is isometric to the Brownian map. Furthermore, by~\cite[Theorem~7.1]{sphere-constructions} each of the surfaces $(U , h|_U)$ are conditionally independent given their quantum boundary lengths and areas, and each has the law of a single bead of a weight-$2/3$ quantum wedge (which has the same law as a quantum disk with given boundary length and area) under this conditioning. Hence each of the metric spaces $(U , \frk d_{h|_U})$ is isometric to a Brownian disk. Therefore Theorem~\ref{thm-peanosphere-gluing} expresses the Brownian map as a metric space quotient of a countable collection of Brownian disks glued together according to boundary length.

\subsection{Basic notation}
\label{sec-basic}

Here we record some basic notation which we will use throughout this paper.

\begin{notation} \label{def-integers}
We write $\BB N = \{1,2,3,\dots\}$. 
\end{notation}

\begin{notation} \label{def-discrete-intervals}
For $a < b \in \BB R$ and $c > 0$, we define the discrete intervals $[a,b]_{c \BB Z} := [a, b]\cap (c \BB Z)$ and $(a,b)_{c\BB Z} := (a,b)\cap (c\BB Z)$. 
\end{notation}
 
\begin{notation}\label{def-asymp}
If $a$ and $b$ are two quantities, we write $a\preceq b$ (resp.\ $a \succeq b$) if there is a constant $C$ (independent of the parameters of interest) such that $a \leq C b$ (resp.\ $a \geq C b$). We write $a \asymp b$ if $a\preceq b$ and $a \succeq b$. 
\end{notation}

\begin{notation} \label{def-o-notation}
If $a$ and $b$ are two quantities which depend on a parameter $x$, we write $a = o_x(b)$ (resp.\ $a = O_x(b)$) if $a/b \rta 0$ (resp.\ $a/b$ remains bounded) as $x \rta 0$ (or as $x\rta\infty$, depending on context).
\end{notation}

Unless otherwise stated, all implicit constants in $\asymp, \preceq$, and $\succeq$ and $O_x(\cdot)$ and $o_x(\cdot)$ errors involved in the proof of a result are required to depend only on the auxiliary parameters that the implicit constants in the statement of the result are allowed to depend on.

\subsection{Outline}
\label{sec-outline}

The remainder of this article is structured as follows. In Section~\ref{sec-prelim-metric}, we review the definitions of internal metrics and quotient metrics (Section~\ref{sec-metric-basic}) and discuss the difficulties involved in gluing together metric spaces along a curve in a general setting (Section~\ref{sec-gluing-remarks}). We also review the construction of the $\sqrt{8/3}$-LQG metric from~\cite{lqg-tbm1,lqg-tbm2,lqg-tbm3} (Section~\ref{sec-lqg-metric}). 

In Section~\ref{sec-bd}, we recall the definition of the Brownian disk from~\cite{bet-mier-disk} and prove some estimates for this metric space which will be needed for the proofs of our main results. Roughly speaking, these estimates tell us that in various situations one has the relations
\eqbn
\op{Area} \approx \op{Length}^{1/2} \approx \op{Distance}^{1/4} 
\eqen
with high probability, where area, length, and distance, respectively, refer to the natural area measure, boundary length measure, and metric on the Brownian disk. 

In Section~\ref{sec-metric-gluing} we prove our main results using the estimates from Section~\ref{sec-bd} and some facts about SLE and LQG. We first argue in Section~\ref{sec-geodesic-hit} that geodesics in $\sqrt{8/3}$-LQG surfaces do not hit the boundary, using~\cite[Lemma 18]{bet-mier-disk} and the equivalence of Brownian and $\sqrt{8/3}$-LQG surfaces. 

Section~\ref{sec-wedge-gluing} contains the proof of Theorem~\ref{thm-wedge-gluing} (the proof of Theorem~\ref{thm-cone-gluing} is identical). 
We will apply the estimates of Section~\ref{sec-bd} together with the equivalence of the Brownian disk and the quantum disk and the local absolute continuity of various $\sqrt{8/3}$-LQG surfaces to show that a certain regularity event governing the relationships between the $\mu_h$-mass, $\nu_h$-length, and $\frk d_h$-diameters of certain sets holds with high probability (Lemma~\ref{lem-metric-reg}). We then argue that on this regularity event, $\sqrt{8/3}$-LQG geodesics can be ``re-routed" (i.e., replaced by slightly different paths) in such a way that they cross the SLE curve only finitely many times, and their length increases by only a small amount. Since distances with respect to the metric space quotient are defined as the infimum of the lengths of paths which cross the gluing interface only finitely many times (Section~\ref{sec-metric-basic}), this implies the theorem statement. A more detailed outline of this argument appears at the beginning of Section~\ref{sec-wedge-gluing}. Section~\ref{sec-general-gluing} deduces Theorems~\ref{thm-peanosphere-gluing} and~\ref{thm-peanosphere-gluing-finite} from Theorems~\ref{thm-wedge-gluing} and~\ref{thm-cone-gluing}.

\section{Metric space preliminaries}
\label{sec-prelim-metric}

\subsection{Basic definitions for metrics}
\label{sec-metric-basic}

In this paper we will consider a variety of metric spaces, including subsets of $\BB C$ equipped with the Euclidean metric, $\sqrt{8/3}$-LQG surfaces equipped with the metric induced by QLE$(8/3,0)$, and various Brownian surfaces equipped with their intrinsic metric. Here we introduce some notation to distinguish these metric spaces and recall some basic constructions for metric spaces. 

\begin{defn} \label{def-metric-ball}
Let $(X,d_X)$ be a metric space. For $A\subset X$ we write $\op{diam} (A ; d_X)$ for the supremum of the $d_X$-distance between pairs of points in $A$. For $r>0$, we write $B_r(A;d_X)$ for the set of $x\in X$ with $d_X(x,A) < r$. If $A = \{y\}$ is a singleton, we write $B_r(\{y\};d_X) = B_r(y;d_X)$. For $A\subset \BB C$, we write $B_r(A)$ for the set of points lying at Euclidean distance (strictly) less than $r$ from $A$. 
\end{defn}

Recall that a \emph{pseudometric} on a set $X$ is a function $d_X : X\times X \rta [0,\infty)$ which satisfies all of the conditions in the definition of a metric except that possibly $d_X(x,y) = 0$ for $x\not=y$. 

Let $(X,d_X)$ be a topological space equipped with a continuous pseudometric $d_X$, let $\sim$ be an equivalence relation on $X$, and let $\ol X = X/\sim$ be the corresponding topological quotient space. For equivalence classes $\ol x , \ol y\in \ol X$, let $\mcl Q(\ol x , \ol y)$ be the set of finite sequences $(x_1 , y_1 ,    \dots , x_n , y_n)$ of elements of $X$ such that $x_1 \in \ol x$, $y_n \in \ol y$, and $y_i \sim x_{i+1}$ for each $i \in [1,n-1]_{\BB Z}$. Let
\eqb \label{eqn-quotient-def}
\ol d_X(\ol x , \ol y) := \inf_{(x_1 , y_1 ,    \dots , x_n , y_n) \in \mcl Q(\ol x , \ol y)} \sum_{i=1}^n d_X (x_i ,y_i ) .
\eqe  
Then $\ol d_X$ is a pseudometric on $\ol X$, which we call the \emph{quotient pseudometric}. It is easily seen from the definition that the quotient pseudometric possesses the following universal property. Suppose $f : (X,d_X) \rta (Y , d_Y)$ is a $1$-Lipschitz map such that $f(x) = f(y)$ whenever $x,y\in X$ with $x\sim y$. Then $f$ factors through the metric quotient to give a 1-Lipschitz map $\ol f : \ol X \rta Y$ such that $\ol f \circ p = f$, where $p : X\rta \ol X$ is the quotient map.

For a curve $\gamma : [a,b] \rta X$, the \emph{$d_X$-length} of $\gamma$ is defined by 
\eqbn
\op{len}\left( \gamma ; d_X \right) := \sup_P \sum_{i=1}^{\# P} d_X(\gamma(t_i) , \gamma(t_{i-1})) 
\eqen
where the supremum is over all partitions $P : a= t_0 < \dots < t_{\# P} = b$ of $[a,b]$. Note that the $d_X$-length of a curve may be infinite.

Suppose $Y\subset X$. The \emph{internal metric of $d_X$ on $Y$} is defined by
\eqb \label{eqn-internal-def}
d_Y (x,y)  := \inf_{\gamma \subset Y} \op{len}\left(\gamma ; d_X\right) ,\quad \forall x,y\in Y 
\eqe 
where the infimum is over all curves in $Y$ from $x$ to $y$. 
The function $d_Y$ satisfies all of the properties of a pseudometric on $Y$ (or a metric, if $d_X$ is a metric) except that it may take infinite values.

\subsection{Remarks on metric gluing}
\label{sec-gluing-remarks}

There are a number of pathologies that can arise in the context of metric gluing.  In what follows, we will describe two such examples.  The first is concerned with what types of problems can arise when one tries to recover a metric space as the metric quotient of the two metric spaces which arise by considering the internal metric when one cuts along a simple curve.  The second example will show that when one considers the metric quotient of two copies of $[0,1]^2$ glued along $[0,1]$ the gluing interface can in fact collapse to a single point.

\medskip

\noindent{\bf Gluing along a simple curve.}  We know a priori (see Lemma~\ref{prop-internal-equal} below) that in the setting of either Theorem~\ref{thm-wedge-gluing} or Theorem~\ref{thm-cone-gluing}, it holds for each $U\in \mcl U^- \cup \mcl U^+$ that the restriction of the metric $\frk d_{h|_U}$ to the open set $U$ coincides with the internal metric of $\frk d_h$ on $U$, as defined in Section~\ref{sec-metric-basic}. However, this fact together with the fact that the gluing interface $\eta$ is a continuous simple curve is far from implying the statements of Theorems~\ref{thm-wedge-gluing} and~\ref{thm-cone-gluing} since neither of these properties rules out the possibility that paths which hit $\eta$ infinitely many times are much shorter than paths which cross only finitely many times (recall that the quotient metric is defined in terms of the infimum of the lengths of paths which cross the interface only finitely many times).

The problem of proving that a metric space cut by a simple curve is the quotient of the internal metrics on the two sides of the curve is similar in spirit to the problem of proving that a curve in $\BB C$ is \emph{conformally removable}~\cite{jones-smirnov-removability}, which means that any homeomorphism of $\BB C$ which is conformal off of the curve is in fact conformal on the whole plane. Indeed, proving each involves estimating how much the length of a path (the image of a straight line in the case of removability or a geodesic in the case of metric gluing) is affected by its crossings of the curve.  Moreover, in the setting of LQG and SLE, both the question of removability and the metric gluing problem addressed in this paper are ways to show that the surfaces formed by cutting along an SLE curve together determine the overall surface (see~\cite{shef-zipper,wedges} for further discussion of this point in the case of removability), although we are not aware of a direct relationship between the two concepts.

SLE$_\kappa$-type curves for $\kappa < 4$ are conformally removable since they are boundaries of H\"older domains~\cite{schramm-sle,jones-smirnov-removability}.  However, there is no such simple criterion for metric gluing.  We know that the SLE$_{8/3}$ gluing interfaces in our setting are H\"older continuous for any exponent less than $1/2$ with respect to $\frk d_h$ (see, e.g., Lemma~\ref{prop-disk-bdy-holder} below). However, even Lipschitz continuity of the gluing interface does not imply the sorts of metric gluing statements we are interested in here, as the following example demonstrates.

Let $X = [0,1]\times [-1,1]$ equipped with the Euclidean metric $d$ and let $(\wt X , \wt d)$ be the metric quotient of $X \sqcup [0,1/2]$ under the equivalence relation which identifies $t\in [0,1/2]$ with $(2t,0)$ in $X$. In other words, $\wt d$ is obtained from $d$ by shortening the lengths of paths which trace along $[0,1] \times \{0\}$ by a factor of $1/2$.
The space $(X,d)$ is the metric quotient of the disjoint union of $[0,1]\times [-1,0]$ and $[0,1]\times [0,1]$, each equipped with their $d$-internal metrics (which both coincide with the Euclidean metric) under the natural identification. Furthermore, $(X,d)$ and $(\wt X , \wt d)$ are homeomorphic (in fact bi-Lipschitz) via the obvious identification and the internal metrics of $d$ and $\wt d$ on each of the two sides $[0,1] \times [-1,0)$ and $[0,1]\times (0,1]$ of $[0,1]\times \{0\}$ coincide. However, $d\not=\wt d$ since points near the interface $[0,1]\times \{0 \}$ are almost twice as far apart with respect to $d$ as with respect to $\wt d$.

In the example above, $\wt d$-geodesics between points near $[0,1]\times \{0\}$ spend most of their time in the gluing interface $[0,1] \times \{0\}$. In fact, paths which trace along the gluing interface are substantially shorter than those which do not, so $\wt d$-distances cannot be approximated by the lengths of paths which cross this interface only finitely many times.
The proofs of Theorems~\ref{thm-wedge-gluing} and~\ref{thm-cone-gluing} amount to ruling out this sort of behavior for $\frk d_h$-geodesics. 
In particular, we will use estimates for  how often a $\frk d_h$ geodesic hits the SLE$_{8/3}$ curve $\eta$ and how distances behave near $\eta$ to show that one can slightly perturb such a geodesic in such a way that it crosses the gluing interface only finitely many times and its length is increased by only a small amount.

\medskip
 
\noindent{\bf Gluing interface collapses to a single point.}  It is possible to have much more pathological behavior when we consider metric gluings where the function which identifies points along the boundaries of the two spaces being glued is not Lipschitz. For example, as Lemma~\ref{lem-bdy-collapse} below demonstrates, it is possible for the boundary to collapse to a single point.  If one considers metric gluings of Brownian surfaces along their boundary as in Section~\ref{sec-bhp-gluing} (without reference to SLE/LQG theory), then this is the type of pathology one would be led to worry about as it is not immediate from the Brownian surface theory that this does not happen.  The following lemma shows that such pathological behavior does in fact arise in many settings.

\begin{lem} \label{lem-bdy-collapse}
Let $(X_1,d_1)$ and $(X_2,d_2)$ be two copies of $[0,1]\times[0,1]$, equipped with the Euclidean distance. Let $\nu$ be a non-atomic Borel measure on $[0,1]$ with $\nu([0,1]) = 1$ which is mutually singular with respect to Lebesgue measure (e.g., $\nu$ could be a $\gamma$-LQG boundary measure for $\gamma \in (0,2)$) and let $f(s) := \nu([0,s])$ for $s\in[0,1]$. 
Let $(X,d)$ be the metric space quotient of the disjoint union of $X_1$ and $X_2$ under the equivalence relation which identifies $(s , 0) \in X_1$ with $(f(s) ,0) \in X_2$. Then the $d$-distance between any two points of the gluing interface (i.e., the image of the two copies of $[0,1]\times \{0\}$ under the quotient map) is 0. 
\end{lem}

Before we give the proof of Lemma~\ref{lem-bdy-collapse}, let us mention that SLE/LQG theory allows us to immediately rule out the possibility that the gluing interface degenerates to a point in each of the theorems of Section~\ref{sec-bhp-gluing} (in fact, the gluing interface must be a simple curve). The reason for this is as follows. In each theorem, the claimed quotient metric space (namely, a certain type of quantum wedge or cone) can be obtained by identifying one or more Brownian half-planes (weight-2 wedges) together along their boundaries due to the conformal welding results of~\cite{wedges}. By the universal property of the quotient metric (Section~\ref{sec-metric-basic}) the quotient metric is the \emph{largest} metric compatible with the equivalence relation, so there must be a 1-Lipschitz map from the actual quotient metric space to the claimed quotient metric space which preserves the gluing interface. Since the gluing interface in the claimed quotient metric space is an SLE$_{8/3}$-type curve, the gluing interface in the actual metric space quotient is also a simple curve.

\begin{proof}[Proof of Lemma~\ref{lem-bdy-collapse}]
Let $q : X_1\sqcup X_2 \rta X$ be the quotient map, let $0 < s_1 < s_2 < 1$, and view $(s_1,0)$ and $(s_2,0)$ as points of $X_1$. We will show that $d(q(s_1,0) , q(s_2,0)) = 0$. 
To this end, fix $\ep > 0$. Since $\nu$ is mutually singular with respect to Lebesgue measure, we can find $s_1 = y_0 < x_1 < y_1 < \dots < x_n < y_n < x_{n+1} = s_2$ such that 
\eqbn
\sum_{j=0}^{n+1} \nu([y_{j-1} , x_j] ) \leq \ep \quad \text{and} \quad \sum_{j=1}^n (y_j-x_j) \leq \ep .
\eqen 
By definition of the quotient metric, we therefore have
\alb
d(q(s_1,0) , q(s_2,0)) 
&\leq \sum_{j=1}^n d_1((x_j,0) , (y_j , 0) ) + \sum_{j=0}^{n+1} d_2\left((f(y_{j-1} ) , 0) , (f(x_j) , 0) \right) \\
&\leq \sum_{j=1}^n (y_j - x_j) + \sum_{j=0}^{n+1} (f(x_j) - f(y_{j-1}) ) \leq 2\ep ,
\ale
which concludes the proof since $\ep > 0$ is arbitrary. 
\end{proof}

\subsection{The $\sqrt{8/3}$-LQG metric}
\label{sec-lqg-metric}

Suppose $(D , h)$ is a $\sqrt{8/3}$-LQG surface. 
In~\cite{lqg-tbm1,lqg-tbm2,lqg-tbm3}, it is shown that if $h$ is some variant of the GFF on $D$, then $h$ induces a metric $\frk d_h$ on $D$ (which in many cases extends to a metric on $D\cup\bdy D$). The construction of this metric builds on the results of~\cite{wedges,tbm-characterization,sphere-constructions,qle}. 

In the special case when $(D,h)$ is a quantum sphere, the metric space $(D , \frk d_h)$ is isometric to the Brownian map~\cite[Theorem~1.4]{lqg-tbm2}. It is shown in~\cite[Corollary 1.5]{lqg-tbm2} that the metric space $(D , \frk d_h)$ is isometric to a Brownian surface in two additional cases: when $(D , h)$ is a quantum disk we obtain a Brownian disk~\cite{bet-mier-disk} and when $(D,h)$ is a $\sqrt{8/3}$-quantum cone we obtain a Brownian plane~\cite{curien-legall-plane}.  It is shown in~\cite{gwynne-miller-uihpq} that the Brownian half-plane is isometric to the $\sqrt{8/3}$-quantum wedge.
Hence $\sqrt{8/3}$-LQG surfaces can be viewed as Brownian surfaces equipped with a conformal structure.

For the convenience of the reader, we give in this section a review of the construction of the metric $\frk d_h$ and note some basic properties which it satisfies.

\subsubsection{The metric on a quantum sphere}
\label{sec-lqg-metric-sphere}
 
The $\sqrt{8/3}$-LQG metric is first constructed in the case of a $\sqrt{8/3}$-LQG sphere $(\BB C , h)$.  Conditional on $h$, let $\mcl C$ be a countable collection of i.i.d.\  points sampled uniformly from the $\sqrt{8/3}$-LQG area measure $\mu_h$. One first defines for $z , w \in \mcl C$ a $\op{QLE}(8/3,0)$ growth process $\{\Gamma_t^{z,w}\}_{t\geq 0}$ started from $z$ and targeted at $w$, which is a continuum analog of first passage percolation on a random planar map~\cite[Section~2.2]{qle}. 
\footnote{It is expected that the process $\Gamma_t^{z,w}$ is a re-parameterization of a whole-plane version of the QLE$(8/3,0)$ processes considered in~\cite{qle}, which are parameterized by capacity instead of by quantum natural time. However, this has not yet been proven to be the case. }
This is accomplished as follows. Let $\delta >0$ and let $\eta_0^\delta$ be a whole-plane SLE$_6$ from $z$ to $w$ sampled independently from $h$ and then run for $\delta$ units of quantum natural time as determined by $h$~\cite{wedges}.  For $t\in [0,\delta]$, let $\Gamma_t^{z,w,\delta} := \eta_0^\delta([0, \tau_0^\delta \wedge \delta])$, where $\tau_0^\delta$ is the first time $\eta_0^\delta$ hits $w$.  

Inductively, suppose $k\in\BB N$ and $\Gamma_t^{z,w,\delta}$ has been defined for $t\in [0, k\delta]$. If $w\in \Gamma_{k\delta}^{z,w,\delta}$, let $\Gamma_t^{z,w,\delta} = \Gamma_{k\delta}^{z,w,\delta}$ for each $t\in [k\delta,(k+1)\delta]$. Otherwise, let $x_k^\delta$ be sampled uniformly from the $\sqrt{8/3}$-LQG length measure $\nu_h$ restricted to the boundary of the connected component of $\BB C\setminus \Gamma_t^{z,w,\delta}$ containing $w$. Let $\eta_k^\delta$ be a radial SLE$_6$ from $x_k^\delta$ to $w$ sampled conditionally independently of $h$ given $x_k^\delta$ and $\{ \Gamma_s^{z,w,\delta} \}_{s\leq t}$ and parameterized by quantum natural time as determined by $h$. For $t\in [k\delta,(k+1)\delta]$, let $\Gamma_t^{z,w,\delta} := \eta_k^\delta([0,\tau\wedge (t-k\delta)]) \cup \Gamma_{k\delta}^{z,w,\delta}$, where $\tau_k^\delta$ is the first time that $\eta_k^\delta$ hits $w$.

The above procedure defines for each $\delta > 0$ a growing family of sets $\{\Gamma^{z,w,\delta}_t\}_{t\geq 0}$ started from~$z$ and stopped when it hits~$w$. It is shown in~\cite{lqg-tbm1} that (along an appropriately chosen subsequence), one can take an a.s.\ limit (in an appropriate topology) as $\delta \rta 0$ to obtain a growing family of sets $\{\Gamma_t^{z,w}\}_{t\geq0 }$ from~$z$ to~$w$, which we call $\op{QLE}(8/3,0)$.  It is shown in \cite{lqg-tbm2} that the limiting process $\{\Gamma_t^{z,w}\}_{t\geq0 }$ is a.s.\ determined by~$h$, even though the approximations are not and that the limit does not depend on the choice of subsequence.

For $t \geq 0$, let $X_t^{z,w}$ be the $\nu_h$-length of the boundary of the connected component of $\BB C\setminus \Gamma_t^{z,w}$ containing~$w$. Let~$\sigma^{z,w}_{r}$ for $r\geq 0$ be defined by
\begin{equation}
\label{eqn-qle-time-change}
r = \int_0^{\sigma^{z,w}_r} \frac{1}{X_t^{z,w}} \, dt .
\end{equation}
Set $\wt\Gamma^{z,w}_r := \Gamma^{z,w}_{\sigma^{z,w}_r}$. The $\sqrt{8/3}$-LQG distance between $z$ and $w$ is defined by
\eqbn
\frk d_h(z,w) := \inf\left\{r \geq 0 \,:\, w \in \wt\Gamma_r^{z,w} \right\} .
\eqen
The time-change~\eqref{eqn-qle-time-change} is natural from the perspective of first passage percolation.  Indeed, quantum natural time is the continuum analog of parameterizing a percolation growth by the number of edges traversed, hence the time change~\eqref{eqn-qle-time-change} is the continuum analog of adding edges to the cluster at a rate proportional to boundary length.
It is shown in~\cite{lqg-tbm1} that this function defines a metric on the set $\mcl C$ (which is a.s.\ dense in $\BB C$). It is shown in~\cite{lqg-tbm2} that $\frk d_h$ in fact extends continuously to a metric on all of $\BB C$, which is mutually H\"older continuous with respect to the metric on $\BB C$ induced by the stereographic projection of the standard metric on the Euclidean sphere $\BB S^2$ and isometric to the Brownian map. In particular, $\frk d_h$ is a geodesic metric.

The re-parameterized $\op{QLE}(8/3,0)$ processes $\{\wt\Gamma^{z,w}_r\}_{r\geq 0}$ for $z,w\in\mcl C$ are related to metric balls for $\frk d_h$ as follows. For $z,w\in \mcl C$ and $r \geq 0$, the connected component of $\BB C\setminus \wt\Gamma_r^{z,w}$ containing $w$ is the same as the connected component of $\BB C\setminus \ol B_r(z;\frk d_h)$ containing $w$.  

Each of the re-parameterized QLE$(8/3,0)$ hulls $\wt\Gamma^{z,w}_r$ is a local set for $h$ in the sense of~\cite[Section~3.9]{ss-contour}, and furthermore is determined locally by $h$.\footnote{One can check that the same is true if $r$ is a stopping time for the filtration generated by $(\wt\Gamma_r^{z,w}  , h|_{\wt\Gamma_r^{z,w}})$ by the usual argument (approximate by stopping times which take only dyadic values and use that the local set property behaves well under taking limits using, e.g., the first characterization of local sets from~\cite[Lemma 3.9]{ss-contour}.)} Hence the definition of the metric $\frk d_h$ implies that if $U \subset \BB C$ is a deterministic connected open set, then the quantities $\{\frk d_h(z,w) \wedge \frk d_h(z,\bdy U) \,:\, z,w\in U\}$ are a.s.\ determined by $h|_U$. In particular, the internal metric of $\frk d_h$ on $U$ (Section~\ref{sec-metric-basic}) is a.s.\ determined by $h|_U$. 

The above metric construction also works with $h + R$ in place of $h$ for any $R\in\BB R$, in which case~\cite[Lemma~2.2]{lqg-tbm2} yields a scaling property for the metric $\frk d_h$:
\eqbn
\frk d_{h+R}(z,w) = e^{\sqrt{8/3}R/4} \frk d_h(z,w) .
\eqen  
 
It is shown in \cite{lqg-tbm3} that the $\sqrt{8/3}$-LQG surface associated with a given Brownian surface is almost surely determined by the metric measure space structure associated with the Brownian surface.  This in particular implies that if one is given an instance of the Brownian map, disk, half-plane, or plane, respectively, then there is a measurable way to embed the surface to obtain an instance of a $\sqrt{8/3}$-LQG sphere, disk, wedge, or cone, respectively.  As mentioned above, the construction of the $\sqrt{8/3}$-LQG metric also implies that the Brownian map, disk, half-plane, or plane structure is a measurable function of the corresponding $\sqrt{8/3}$-LQG structure.  In this way, Brownian and $\sqrt{8/3}$-LQG surfaces are one and the same.

\subsubsection{Metrics on general $\sqrt{8/3}$-LQG surfaces}
\label{sec-lqg-metric-general}

In this subsection we let $D\subset \BB C$ be a connected open set and we let $h$ be a random distribution on $D$ with the following property. For each bounded deterministic open set $U\subset D$ at positive Euclidean distance from $\bdy D$, the law of $h|_U$ is absolutely continuous with respect to the corresponding restriction of some embedding into~$\BB C$ of a quantum sphere (with possibly random area). For example, $h$ could be an embedding of a quantum disk, a thick ($\alpha \leq Q$) quantum wedge, a single bead of a thin ($\alpha \in (Q , Q +\sqrt{2/3} )$) quantum wedge, or a quantum cone (in this last case we take $D$ to be the complement of the two marked points for the cone). We will show how to obtain a $\sqrt{8/3}$-LQG metric $\frk d_h$ on $D$ from $h$. 

The discussion at the end of Section~\ref{sec-lqg-metric-sphere} together with local absolute continuity allows us to define a metric $\frk d_{h|_U}$ for any bounded open connected set $U\subset D$ at positive Euclidean distance from $\bdy D$. If we let $\{U_n\}_{n\in\BB N}$ be an increasing sequence of such open sets with $\bigcup_{n=1}^\infty U_n = D$, then the metrics $\frk d_{h_{U_n}}$ (extended to be identically equal to $\infty$ for points outside of $U_n$) are decreasing as $n\rta\infty$, so the limit
\eqbn
\frk d_h(z,w) := \lim_{n\rta\infty} \frk d_{h|_{U_n}}(z,w)
\eqen
exists for each $z,w\in D$ and defines a metric on $D$. It is easy to see that this metric does not depend on the choice of $\{U_n\}_{n\in\BB N}$. 

We now record some elementary properties of the metric $\frk d_h$. The first property is immediate from the above definition. 
 
\begin{lem} \label{prop-internal-equal}
Suppose we are in the setting described just above, so that~$\frk d_h$ is a well-defined metric on~$D$. For any deterministic open connected set~$U\subset \BB C$, the internal metric (Section~\ref{sec-metric-basic}) of~$\frk d_h$ on~$U$ is a.s.\ equal to~$\frk d_{h|_U}$. 
\end{lem}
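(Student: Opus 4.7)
The plan is first to reduce to the case that $U$ is bounded and at positive Euclidean distance from $\bdy D$, and then to use local absolute continuity to transfer the equality from the quantum sphere case, where it holds essentially by definition.

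For the reduction, choose an increasing sequence $\{U_k\}_{k\in\BB N}$ of open connected sets with $\ol{U_k}\subset U$ compact and $\bigcup_k U_k = U$.  Both the internal metric of $\frk d_h$ on $U_k$ and $\frk d_{h|_{U_k}}$ are monotone non-increasing in $k$: the former because enlarging the domain admits more competing paths, and the latter by the definition of $\frk d_{h|_U}$ as a decreasing limit over such exhaustions.  Moreover, their limits as $k\to\infty$ agree with the internal metric of $\frk d_h$ on $U$ and with $\frk d_{h|_U}$, respectively; for the internal metric this is because any rectifiable curve $\gamma\subset U$ has compact image and hence lies in some $U_k$, so the infimum over paths in $U_k$ converges down to the infimum over paths in $U$.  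Hence it suffices to prove the lemma when $U$ itself is bounded and at positive distance from $\bdy D$.

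For such $U$, fix a bounded open $U'$ with $\ol U\subset U'$ also at positive distance from $\bdy D$.  In the quantum sphere case, the end of Section~\ref{sec-lqg-metric-sphere} tells us that the internal metric of $\frk d_h$ on $U$ is a measurable function of $h|_U$, and $\frk d_{h|_U}$ is defined (via local absolute continuity from the sphere case) to be exactly this function; so the lemma holds tautologically for a sphere.  For the general $h$ of the present setup, the same cited discussion combined with local absolute continuity on $U'$ shows that $\{\frk d_h(z,w)\wedge \frk d_h(z,\bdy U') : z,w\in U'\}$ is a.s.\ a measurable function of $h|_{U'}$.  By continuity of $\frk d_h$, for any compact $K\subset U$ there is a random $\ep>0$ so that $\frk d_h(z,w)<\frk d_h(z,\bdy U')$ whenever $z,w\in K$ have Euclidean distance less than $\ep$; consequently $\frk d_h$-distances between nearby points of $U$, and hence $\frk d_h$-lengths of rectifiable curves in $U$ and the internal metric of $\frk d_h$ on $U$, are all measurable functions of $h|_{U'}$.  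The identity between the internal metric of $\frk d_h$ on $U$ and $\frk d_{h|_U}$ thus becomes an equality of two measurable functions of $h|_{U'}$, which is known to hold in the sphere case, and therefore transfers to general $h$ by absolute continuity of the law of $h|_{U'}$ with respect to its sphere counterpart.

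The main obstacle is the local measurability of the internal metric of $\frk d_h$ on $U$ as a function of $h|_{U'}$ in the general (non-sphere) case; once this is in hand the lemma is just a transfer of an a.s.\ identity across a Radon--Nikodym derivative.
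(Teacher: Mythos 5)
Your reduction to bounded $U$ with $\ol U$ at positive Euclidean distance from $\bdy D$ is fine (and the paper itself offers no written proof, declaring the lemma immediate from the definition of $\frk d_h$, so the real question is whether your unwinding of that definition is sound). The gap is in your second half. Absolute continuity transfers an a.s.\ identity between two \emph{fixed} measurable functionals of $h|_{U'}$ from the sphere to the general field; it does not apply unless you first know that the left-hand side -- the internal metric of $\frk d_h$ on $U$ -- is given by the \emph{same} functional of $h|_{U'}$ in the general case as in the sphere case. Mere measurability, which is what you identify as ``the main obstacle,'' is not sufficient, and you do not establish even that: you assert that ``the same cited discussion combined with local absolute continuity'' shows $\frk d_h(z,w)\wedge\frk d_h(z,\bdy U')$ is determined by $h|_{U'}$, but the cited discussion concerns the QLE-constructed sphere metric, whereas for a general $h$ the metric $\frk d_h$ is \emph{defined} only as the decreasing limit $\lim_n\frk d_{h|_{U_n}}$ along an exhaustion of $D$. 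Identifying the internal metric of this limit on $U$ with the a.c.-transferred sphere functional is precisely the content of the lemma, so your final transfer step is circular.

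The non-circular route works directly with the limit definition and splits into two inequalities. Since $\frk d_h\le\frk d_{h|_{U_n}}$ on $U_n$, the internal metric of $\frk d_h$ on $U$ is at most the internal metric of $\frk d_{h|_{U_n}}$ on $U$, and the latter equals $\frk d_{h|_U}$ -- this \emph{is} a legitimate absolute continuity transfer, because both sides are by construction the same functionals of $h|_{U_n}$ as in the sphere case, where the identity is just the consistency of nested internal metrics. For the reverse inequality one needs a local agreement statement: for $z,w\in U$ with $\ol U$ a compact subset of $U_n$ and $\frk d_h(z,w)<\frk d_h(z,\bdy U_n)$, one has $\frk d_h(z,w)=\frk d_{h|_{U_n}}(z,w)$. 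This follows by transferring the compatibility $\frk d_{h|_{U_m}}(z,w)\wedge\frk d_{h|_{U_m}}(z,\bdy U_n)=\frk d_{h|_{U_n}}(z,w)\wedge\frk d_{h|_{U_n}}(z,\bdy U_n)$ for $m\ge n$ from the sphere and then letting $m\to\infty$. Combined with a fine-partition computation of $\op{len}(\gamma;\frk d_h)$ for curves $\gamma\subset U$ (refinement only increases the partition sums, and $\frk d_h(\ol U,\bdy U_n)>0$ by compactness and the topology statement), this gives $\op{len}(\gamma;\frk d_h)=\op{len}(\gamma;\frk d_{h|_{U_n}})\ge\frk d_{h|_U}(z,w)$, which is the missing inequality. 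Your write-up never engages with the limit structure of $\frk d_h$, which is where all the content lies.
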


\begin{remark} \label{remark-internal-compare}
It follows from Lemma~\ref{prop-internal-equal} that if $A\subset \bdy U$ and  $\frk d_{h|_U}$ extends by continuity (with respect to the Euclidean topology) in the sense of Definition~\ref{def-cont} to a metric on $U\cup A$ (e.g., using the criterion of Lemma~\ref{prop-metric-bdy} just below) then for $x,y\in U\cup A$, we have $\frk d_{h|_U}(x,y) \geq \frk d_h(x,y)$. 
Indeed, the statement of the lemma immediately implies that this is the case whenever $x,y\in U$. For points in $A$, we take limits and use that both $\frk d_{h|_{U\cup A}}$ and $\frk d_h|_{U\cup A}$ (and the Euclidean metric) induce the same topology on $U$. We do \emph{not} prove in this paper that $\frk d_{h|_U}$ is the same as the internal metric of $\frk d_h$ on $U\cup A$. 
\end{remark}

Next we note that the LQG metric is coordinate invariant.

\begin{lem} \label{prop-metric-coord}
Suppose we are in the setting above. Let $\wt D$ be another domain and let $\phi : \wt D \rta  D$ be a conformal map. If we let $\wt h:= h\circ \phi + Q\log |\phi'|$ (with $Q$ as in~\eqref{eqn-lqg-coord}) then a.s.\ $\frk d_h(\phi(z) , \phi(w)) = \frk d_{\wt h}(z,w)$ for each $z,w\in \wt D$.
\end{lem}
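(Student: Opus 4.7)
The plan is a two-stage reduction. First, I establish coordinate invariance for the internal metric $\frk d_{h|_U}$ on any bounded open $U \subset D$ with $\ol U$ at positive Euclidean distance from $\bdy D$. Second, I exhaust $D$ by such sets and pass the coordinate change through the exhaustion definition of $\frk d_h$ from Section~\ref{sec-lqg-metric-general}. To set up the second stage, fix an exhaustion $U_n \uparrow D$ of the required type, and let $\wt U_n := \phi^{-1}(U_n)$. Since $\phi : \wt D \to D$ is a conformal homeomorphism, the sets $\wt U_n$ form an analogous exhaustion of $\wt D$: each $\wt U_n$ is open with $\ol{\wt U_n}$ compact in $\wt D$, and $\bigcup_n \wt U_n = \wt D$.

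The core of the argument is the first stage. The last paragraph of Section~\ref{sec-lqg-metric-sphere} tells us that the internal metric on $U$ is a.s.\ a deterministic measurable functional $\Phi_U$ of $h|_U$, and by local absolute continuity the same functional computes $\frk d_{h|_U}$ for the fields under consideration in Section~\ref{sec-lqg-metric-general}. The equivariance
\[
\Phi_{\wt U}\bigl(h \circ \phi + Q\log|\phi'|\bigr)(z, w) = \Phi_U(h)(\phi(z), \phi(w)), \quad z, w \in \wt U,
\]
is verified by tracking the QLE$(8/3,0)$ construction step by step: whole-plane and radial SLE$_6$ are conformally invariant, the quantum natural time used to stop each SLE$_6$ segment and the $\nu_h$-length used to sample the next tip are intrinsic to the quantum surface~\cite[Proposition~2.1]{shef-kpz}, and the time change~\eqref{eqn-qle-time-change} is expressed purely in boundary $\nu_h$-length. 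Restricting to QLE hulls contained in $U$ makes each step a functional of $h|_U$ alone, and each ingredient transforms in the expected way under~\eqref{eqn-lqg-coord}.

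With the first stage in hand, for each $n$ we have a.s.\ $\frk d_{h|_{U_n}}(\phi(z), \phi(w)) = \frk d_{\wt h|_{\wt U_n}}(z, w)$ for all $z, w \in \wt U_n$. Both sides are monotone decreasing in $n$ and converge to $\frk d_h(\phi(z), \phi(w))$ and $\frk d_{\wt h}(z, w)$ respectively (any $z, w \in \wt D$ lies in $\wt U_n$ for $n$ large), so the desired identity follows.

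The main obstacle is the equivariance step in the first stage. The sphere metric's coordinate invariance is recorded only under M\"obius self-maps of $\BB C \cup \{\infty\}$, whereas a generic $\phi : \wt U \to U$ need not extend globally. The resolution is that the QLE$(8/3,0)$ hulls are local sets in the sense of~\cite[Section~3.3]{ss-contour}, so the iterative construction (and its $\delta \to 0$ limit) can be carried out entirely inside $U$ using only $h|_U$; carefully verifying that each step of this ``internal'' construction transforms correctly under a general conformal change of coordinates is the technical heart of the argument.
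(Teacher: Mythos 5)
Your proposal is correct and follows essentially the same route as the paper, whose entire proof is the one-line observation that all quantities entering the QLE$(8/3,0)$ construction of the metric (SLE$_6$ curves, quantum natural time, $\nu_h$-lengths, and the time change~\eqref{eqn-qle-time-change}) are preserved under coordinate changes of the form~\eqref{eqn-lqg-coord}. Your version merely spells out the exhaustion step needed to pass from bounded sets compactly contained in $D$ to all of $D$, which the paper leaves implicit.
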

\begin{proof}
This follows since all of the quantities involved in the definition of the $\op{QLE}(8/3,0)$ processes $\wt\Gamma^{z,w}$ used to define the metric are preserved under coordinate changes as in the statement of the lemma.
\end{proof}
 
Finally, we give conditions under which $\frk d_h$ extends by continuity to a subset of $\bdy D$ (Definition~\ref{def-cont}). 
We note that the above definition a priori only defines $\frk d_h$ on the interior of $D$.

\begin{lem}
\label{prop-metric-bdy}
Suppose we are in the setting above and that $D$ is simply connected, with simple boundary. Let $A\subset \bdy D$ be a connected set and suppose that there is an open set $U\subset D$ such that $A\subset \bdy U$, $A$ lies at positive distance from $\bdy D\setminus \bdy U$, and the law of $h|_U$ is absolutely continuous with respect to the law of the corresponding restriction of an embedding into $D$ of a quantum disk with fixed or random boundary length and area. Then $\frk d_h$ extends by continuity (with respect to the Euclidean topology on $D\cup A$) to a metric on $D\cup A$ which induces the Euclidean topology on $D\cup A$.
\end{lem}
\begin{proof}
In the case when $h$ is in fact an embedding into $D$ of a quantum disk,~\cite[Corollary 1.5]{lqg-tbm2} implies that $(D , \frk d_h)$ is isometric to the Brownian disk. Since the Brownian disk has the topology of a \emph{closed} disk~\cite{bet-disk-tight,bet-mier-disk} the Brownian disk metric extends by continuity to the boundary (with respect to the disk topology), hence $\frk d_h$ extends by continuity to $\bdy D$. In general, the first assertion of the lemma shows that $\frk d_{h|_U}$ extends by continuity to a metric on $U \cup A$ which induces the Euclidean topology on $U\cup A$. By this and Lemma~\ref{prop-internal-equal}, the same is true of $\frk d_h$. Note that the condition that $A$ lies at positive distance from $\bdy D\setminus \bdy U$ is used to avoid worrying about the behavior of $\frk d_{h|_U}$ near $\bdy U$. 
\end{proof}

The following lemma shows that one can also extend the metric to a boundary point where the field has a log singularity, such as the first marked point of a quantum wedge.

\begin{lem}
\label{prop-metric-bdy-wedge}
Suppose $Q$ is as in~\eqref{eqn-lqg-coord}, $D\subset \BB C$ is simply connected with simple boundary, $x,y\in \bdy D$, and $h$ is either an embedding into $(D ,x,y)$ of an $\alpha$-quantum wedge for $\alpha \leq Q$ or an embedding into $(D,x,y)$ of a single bead of an $\alpha$-quantum wedge with area $\frk a >0$ and left/right boundary lengths $\frk l^- , \frk l^+ >0$ for $\alpha \in (Q , Q + \sqrt{2/3})$. 
Then a.s.\ $\frk d_h$ extends by continuity to $\bdy D \setminus \{y\}$ (in the first case) or $\bdy D$ (in the second case), where in each case we use the Euclidean topology on $\ol D$ in Definition~\ref{def-cont}.
\end{lem}

We note that in the case when the size of the log singularity is smaller than 2 (e.g., if $\alpha < 2$), one can give a short proof of Lemma~\ref{prop-metric-bdy-wedge},  which does not use Appendix~\ref{sec-quantum-diam}, using the fact that a GFF-type distribution has an $\alpha$-log singularity at a typical point sampled from its $\alpha$-LQG boundary length measure (see, e.g.,~\cite[Lemma A.7]{wedges}). We give a longer argument which works for all log singularities of size $\leq Q$. 

\begin{proof}[Proof of Lemma~\ref{prop-metric-bdy-wedge}]
Since the laws of the particular choices of $h$ discussed in the second statement are locally absolutely continuous with respect to the law of an embedding into $D$ of a quantum disk away from $x$ and $y$, it follows that the metric $\frk d_h$ in each of these cases extends by continuity to $\bdy D\setminus \{x,y\}$. In the case of a single bead of a thin wedge for $\alpha \in (Q,Q+\sqrt{2/3})$, the law of our given distribution $h$ is locally absolutely continuous near each of its marked points with respect to the law of an embedding of a $\beta$-quantum wedge into $(D,x,y)$ for $\beta < Q$. By this and Lemma~\ref{prop-internal-equal}, it therefore suffices to show that if $(D, h , x , y)$ is a $\beta$-quantum wedge for $\beta \leq Q$, then $\frk d_h$ (which we know is defined in $\ol D\setminus \{x,y\}$) extends by continuity to $\ol D\setminus \{y \}$. 

By making a conformal change of coordinates and applying Lemma~\ref{prop-metric-coord}, we can assume without loss of generality that $(D,x,y) = (\BB S , +\infty,-\infty)$ where $\BB S = \BB R \times (0,\pi)$ is the infinite horizontal strip.  We seek to extend $\frk d_h$ from $\ol{\BB S}$ to $\ol{\BB S} \cup \{+\infty\}$ in such a way that the topology on $\BB S\cup \{+\infty\}$ induced by the extension is the same as the topology obtained by conformally mapping $\BB S$ to $\BB H$ in such a way that $-\infty \mapsto \infty$.
To do this it suffices to show that $\op{diam}\left( [k,+\infty) \times [0,\pi] \right) \rta 0$ as $k \rta\infty$. Indeed, this together with the triangle inequality and Cauchy's convergence criterion shows that $\lim_{w\rta+\infty} \frk d_h(z,w)$ exists for each $z \in \ol{\BB S}$, and we can then define $\frk d_h(z,+\infty)$ to be this limit. 

The reason for parameterizing by $\BB S$ is that~\cite[Remark~4.6]{wedges} implies that (after possibly applying a translation) the distribution $h$ can be described as follows. If $\beta <Q$, let $X : \BB R\rta \BB R$ be the process such that for $s \geq 0$, $X_s = B_{2s} - (Q-\beta) s$, where $B$ is a standard linear Brownian motion with $B_0 = 0$; and for $s < 0$, $X_s = \wh B_{-2s} + (Q-\beta) s$, where $\wh B$ is an independent standard linear Brownian motion with $\wh B_0 = 0$ conditioned so that $\wh B_{2t} + (Q-\beta)t \geq 0$ for all $t\geq 0$. If $\beta = Q$, instead let $X : \BB R\rta \BB R$ be the process such that $\{X_s\}_{s\geq 0}$ is $-1$ times a 3-dimensional Bessel process started from $0$ and $\{X_{-s}\}_{s \leq 0}$ is an independent standard linear Brownian motion started from 0. 
Then $h = h^0 + h^\dagger$, where $h^0$ is the function on $\BB S$ such that $h^0(z) = X_s$ for $z\in \{s\} \times (0,\pi)$ for each $s\in\BB R$; and $h^\dagger$ is a random distribution independent from $h^0$ whose law is that of the projection of a free boundary GFF on $\BB S$ onto the space of functions on $\BB S$ whose average over every segment $\{s\} \times (0,\pi)$ is 0.  

For $k \in \BB Z$, let $S_k := [k , k+1]\times [0,\pi]$ and $S_k' := [k-1,k+2]\times [0,\pi]$.  
By the scaling property of the $\sqrt{8/3}$-LQG metric~\cite[Lemma 2.2]{lqg-tbm2} and since $h^\dagger =h - h^0$, if we set $S = [-1,2] \times [0,\pi]$ then 
\eqb \label{eqn-h^dagger-dist}
\op{diam} \left( S_0  ; \frk d_{h^\dagger|_{S_0'}} \right) \leq   \exp\left( \frac{1}{\sqrt 6} \sup_{t\in [-1,2]} |X_t| \right) \op{diam} \left( S_1  ; \frk d_{h |_{S_1'}} \right)  .
\eqe  
The law of $h^\dagger$ does not depend on $\alpha$ and by the above description of the law of $X$, the first factor on the right in~\eqref{eqn-h^dagger-dist} has finite moments of all positive orders. By Lemma~\ref{lem-circle-avg-ball-diam}, the second factor on the right in~\eqref{eqn-h^dagger-dist} has a finite moment of some positive order in the special case when $\gamma=\sqrt{8/3}$. By H\"older's inequality, we find that there is a universal constant $p \in (0,1]$ such that 
\eqb \label{eqn-h^dagger-mean}
c :=  \BB E\left[ \op{diam} \left( S_0  ; \frk d_{h^\dagger|_{S_0'}} \right)^p  \right]  < \infty .
\eqe  
By~\eqref{eqn-h^dagger-mean}, the translation invariance of the law of $h^\dagger$, the independence of $h^\dagger$ and $X$, and the scaling property of the $\sqrt{8/3}$-LQG metric~\cite[Lemma 2.2]{lqg-tbm2}, we infer that for each $k\in\BB N$, 
\eqb \label{eqn-h-rect-mean}
\BB E\left[ \op{diam} \left( S_k  ; \frk d_{h |_{S_k'}} \right)^p  \right]  \leq c \BB E\left[  \exp\left( \frac{p}{\sqrt 6} \sup_{s \in [k-1,k+2]} X_s \right) \right] .
\eqe 
By summing over all $j \geq k$ and using that $x\mapsto x^p$ is concave, hence subadditive, we get that 
\eqb \label{eqn-h-rect-sum}
\BB E\left[ \op{diam} \left( [k ,+\infty) \times [0,\pi] ; \frk d_h    \right)^p  \right]  \leq c \sum_{j=k}^\infty \BB E\left[  \exp\left( \frac{p}{\sqrt 6} \sup_{s \in [j-1,j+2]} X_s \right) \right]  .
\eqe 
Recalling that $X|_{[0,\infty)}$ is a Brownian motion with negative drift if $\beta < Q$ or $-1$ times a 3-dimensional Bessel process if $\beta = Q$, we find that the right side of~\eqref{eqn-h-rect-sum} a.s.\ tends to 0 as $k\rta+\infty$, as required. In the case of a 3-dimensional Bessel process, the finiteness of the sum can be seen by using the Gaussian tail bound to compare $\sup_{s \in [j-1,j+2]} X_s$ to $X_j$ and then using the explicit formula for the transition density of such a process given on~\cite[Page 446]{revuz-yor}.
\end{proof}

\section{The Brownian disk}
\label{sec-bd}

For the proofs of our main results, we will require several facts about the Brownian disk, which was originally introduced in~\cite{bet-mier-disk}. We collect these facts in this section.

\subsection{Brownian disk definition}
\label{sec-brownian-disk}

Fix $a , \ell > 0$. Here we will give the definition of the Brownian disk with area $a$ and perimeter $\ell$, following~\cite{bet-mier-disk}.  Let $X : [0,a] \rta [0,\infty)$ be a standard Brownian motion started from $\ell$ and conditioned to hit $0$ for the first time at time $a$ (such a Brownian motion is defined rigorously in, e.g.,~\cite[Section~2.1]{bet-mier-disk}). 
For $s,t\in [0,a]$, set
\eqb \label{eqn-d_X}
d_X(s,t) := X_s + X_t - 2\inf_{u \in [s\wedge t , s\vee t]} X_u .
\eqe  
As explained in~\cite[Section~2.1]{bet-mier-disk}, $d_X$ defines a pseudometric on $[0,a]$ and the quotient metric space $[0,a] / \{d_X = 0\}$ is a forest of continuum random trees, indexed by the excursions of $X$ away from its running infimum.

Conditioned on $X$, let $Z^0$ be the centered Gaussian process with
\eqb \label{eqn-Z^0-def}
\op{Cov}(Z_s^0 ,Z_t^0 ) = \inf_{u\in [s\wedge t , s\vee t]} \left( X_u - \inf_{v \in [0,u]} X_v \right) , \quad s,t\in [0,a] .
\eqe 
One can readily check using the Kolmogorov continuity criterion that $Z^0$ a.s.\ admits a continuous modification which is $\alpha$-H\"older continuous for each $\alpha <1/4$. For this modification we have $Z_s^0 = Z_t^0$ whenever $d_X(s,t) = 0$, so $Z^0$ defines a function on the continuum random forest $[0,a] / \{d_X = 0\}$. 
 
Let $\frk b$ be $\sqrt 3$ times a Brownian bridge from $0$ to $0$ independent from $(X,Z)$ with time duration $\ell$. 
For $r \in [0,\ell]$, let $T_r := \inf\left\{t \geq 0 \,:\, X_t = \ell-r \right\}$ and for $t \in [0,a]$, let $T^{-1}(t) := \sup\left\{r \in [0,\ell] \,:\, T_r \leq t \right\}$. Set
\eqb \label{eqn-bd-label-def}
Z_t := Z_t^0 +  \frk b_{T^{-1}(t)} .
\eqe 
We view $[0,a]$ as a circle by identifying~$0$ with~$a$ and for $s,t\in [0,a]$ we define $\ul Z_{s,t}$ to be the minimal value of~$Z$ on the counterclockwise arc of $[0,a]$ from $s$ to $t$. 
For $s,t \in [0,a]$, define
\eqb \label{eqn-d_Z}
d_Z \left(s,t \right) = Z_s + Z_t - 2\left( \ul Z_{s,t} \vee \ul Z_{t,s} \right) .
\eqe 
Then $d_Z$ is not a pseudometric on $[0,a]$, so we define
\eqb \label{eqn-dist0-def}
d_{a,\ell}^0(s,t) = \inf \sum_{i=1}^k d_Z(s_i , t_i)
\eqe 
where the infimum is over all $k\in\BB N$ and all $2k+2$-tuples $( t_0, s_1 , t_1 , \dots , s_{k } , t_{k } , s_{k+1}) \in [0,a]^{2k+2}$ with $t_0 = s$, $s_{k+1} = t$, and $d_X(t_{i-1} , s_i) = 0$ for each $i\in [1,k+1]_{\BB Z}$. In other words, $d_{a,\ell}^0$ is the largest pseudometric on $[0,a]$ which is at most $d_Z$ and is zero whenever $d_X$ is  $0$. 

The \emph{Brownian disk} with area $a$ and perimeter $\ell$ is the quotient space $ \op{BD}_{a,\ell}  = [0,a]/\{d_{a,\ell}^0 = 0\}$ equipped with the quotient metric, which we call $d_{a,\ell}$. It is shown in~\cite{bet-mier-disk} that $(\op{BD}_{a,\ell} , d_{a,\ell}) $ is a.s.\ homeomorphic to the closed disk. 

We write $\BB p : [0,a] \rta \op{BD}_{a,\ell}$ for the quotient map. The pushforward $\mu_{a,\ell}$ of Lebesgue measure on $[0,a]$ under $\BB p$ is a measure on $\op{BD}_{a,\ell}$ with total mass $a$, which we call the \emph{area measure} of $\op{BD}_{a,\ell}$. 
The \emph{boundary} of $\op{BD}_{a,\ell}$ is the set $\bdy\op{BD}_{a,\ell} = \BB p \left(\{T_r \,:\, r \in [0,\ell] \} \right)$. We note that $\op{BD}_{a,\ell}$ has a natural orientation, obtained by declaring that the path $t\mapsto \BB p(t)$ traces $\bdy \op{BD}_{a,\ell}$ in the counterclockwise direction. 
The pushforward $\nu_{a,\ell}$ of Lebesgue measure on $[0,\ell]$ under the composition $r\mapsto \BB p(T_r)$ is called the \emph{boundary measure} of $\op{BD}_{a,\ell}$.  

By~\cite[Corollary 1.5]{lqg-tbm2}, the law of the metric measure space $(\op{BD}_{a,\ell} , d_{a,\ell} , \mu_{a,\ell}, \nu_{a,\ell})$ is the same as that of the $\sqrt{8/3}$-LQG disk with area $a$ and boundary length $\ell$, equipped with its $\sqrt{8/3}$-LQG area measure and boundary length measure. 
  
\begin{defn} \label{def-length-disk}
Let $\ell > 0$ and let $T_\ell$ be a random variable with the law of the first time a standard linear Brownian motion hits $-\ell$. We write $(\op{BD}_{*,\ell} , d_{*,\ell}) := (\op{BD}_{T_\ell ,\ell} , d_{T_\ell ,\ell})$, so that $\op{BD}_{*,\ell}$ is a Brownian disk with random area. Note that in this case the corresponding function $X$ defined above has the law of a standard linear Brownian motion started from $\ell$ and stopped at the first time it hits $0$.
\end{defn}

It is often convenient to work with a random-area Brownian disk rather than a fixed-area Brownian disk since the encoding function $X$ has a simpler description in this case. 

\subsection{Area, length, and distance estimates for the Brownian disk}
\label{sec-disk-estimate}

In this subsection we will prove some basic estimates relating distances, areas, and boundary lengths for the Brownian disk which are needed for the proofs of our main results. These estimates serve to quantify the intuition that for Brownian surfaces we have
\eqbn
\op{Area} \approx \op{Length}^{1/2} \approx \op{Distance}^{1/4} .
\eqen
In other words, a subset of the Brownian disk with area $\delta$ typically has boundary length approximately $\delta^{1/2}$ and diameter approximately $\delta^{1/4}$.  

Throughout this section, for $x,y\in \bdy \op{BD}_{a,\ell}$ we write $[x,y]_{\bdy\op{BD}_{a,\ell}}$ for the counterclockwise arc of $\bdy \op{BD}_{a,\ell}$ from $x$ to $y$ (i.e., in the notation of Section~\ref{sec-brownian-disk}, the arc traced by the boundary path $r\mapsto \BB p(T_r)$ between the times when it hits $x$ and $y$).  Our first estimate tells us that distances along the boundary are almost $1/2$-H\"older continuous with respect to boundary length (in the same way that a standard Brownian motion on a compact time interval is almost $1/2 $-H\"older continuous).

\begin{lem} \label{prop-disk-bdy-holder}
Let $a > 0$ and $\ell > 0$ and let $(\op{BD}_{a,\ell} , d_{a,\ell})$ be a Brownian disk with area $a$ and perimeter $\ell$. For each $\zeta>0$, there a.s.\ exists $C>0$ such that for each $x,y\in \bdy \op{BD}_{a,\ell}$, we have 
\eqb \label{eqn-disk-bdy-holder}
d_{a,\ell} \left(x,y\right) \leq C \nu_{a,\ell} \left( [x,y]_{\bdy \op{BD}_{a,\ell}} \right)^{1/2}  \left( \left| \log \nu_{a,\ell} \left( [x,y]_{\bdy \op{BD}_{a,\ell}} \right) \right| + 1 \right)^{\frac74 + \zeta} .
\eqe 
The same holds for a random-area disk with fixed boundary length as in Definition~\ref{def-length-disk}. In this latter case, if we let $C$ be the smallest constant for which~\eqref{eqn-disk-bdy-holder} is satisfied, then for $A >1$, $\BB P[C > A]$ decays faster than any negative power of $A$.
\end{lem}

The idea of the proof of Lemma~\ref{prop-disk-bdy-holder} is as follows. Recall that $\bdy \op{BD}_{a,\ell}$ is the image under the quotient map $[0,a] \rta \op{BD}_{a,\ell}$ of the set of times when the encoding function $X$ attains a running infimum relative to time $0$ (i.e.\ the image of $r\mapsto T_r$). The pair $(X , Z^0)$ restricted to each such excursion evolves as a Brownian excursion together with the head of the Brownian snake driven by this excursion, so we can use tail bounds for the Brownian snake~\cite[Proposition~14]{serlet-snake} to bound the maximum of the restriction of $Z^0$ to each such excursion in terms of its time length. On the other hand, the excursions with unusually long time length are distributed according to a Poisson point process, so there cannot be too many such excursions in $[T_{r_1} , T_{r_2}]$ for any fixed $0 < r_1<r_2<\ell$. We use~\eqref{eqn-dist0-def} to construct a path in $\op{BD}_{a,\ell}$ between the boundary points corresponding to $T_{r_1}$ and $T_{r_2}$ which skips all of the long excursions. See Figure~\ref{fig-bdy-holder} for an illustration. A similar argument is used in~\cite[Section 7.4]{bet-disk-tight} to prove that the Hausdorff dimension of $\bdy \op{BD}_{a,\ell}$ is at most 2, but we need a somewhat more precise estimate so we will give a self-contained proof.

We will use the elementary estimate
\eqb \label{eqn-poisson-tail}
\BB P \left[X \geq x \right] \leq \frac{e^{-\lambda} (e \lambda)^x}{x^x} ,\quad \forall x > \lambda \quad \text{for $X \sim \op{Poisson}(\lambda)$}. 
\eqe

\begin{figure}[ht!]
 \begin{center}
\includegraphics[scale=.8]{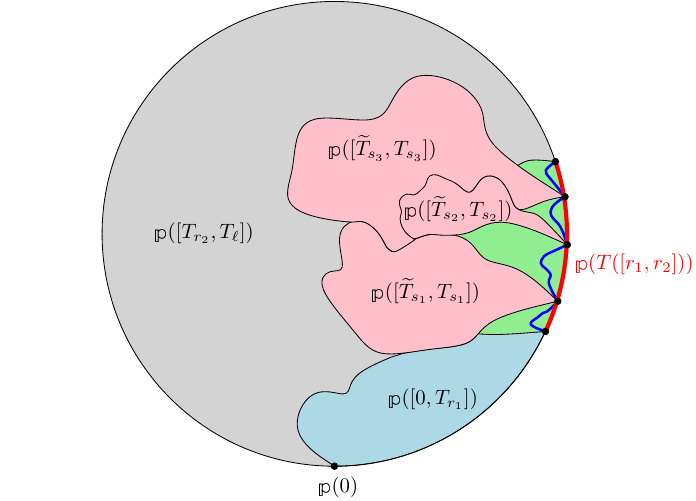}
\vspace{-0.01\textheight}
\caption[Illustration of the proof of Lemma~\ref{prop-disk-bdy-holder}]{Illustration of the proof of Lemma~\ref{prop-disk-bdy-holder}. Shown is the Brownian disk $\op{BD}_{*,\ell}$ and an arc $\BB p(T([r_1,r_2])) \subset \bdy \op{BD}_{*,\ell}$ (red) whose diameter we are trying to bound. To construct a path from $\BB p(T_{r_1})$ to $\BB p(T_{r_2})$, we consider a path (shown in blue) consisting of a concatenation of segments which avoid the large excursions of $t\mapsto \BB p(t)$ away from $\BB p(T([r_1,r_2]))$  (pink). The number of such excursions is bounded using~\eqref{eqn-few-big-jump} and the lengths of each of the segments between excursions is bounded using~\eqref{eqn-d_Z} and~\eqref{eqn-snake-sup}.}\label{fig-bdy-holder}
\end{center}
\end{figure}

\begin{proof}[Proof of Lemma~\ref{prop-disk-bdy-holder}]
First fix $\ell\in\BB N$ and let~$\op{BD}_{*,\ell}$ be a random-area Brownian disk with boundary length~$\ell$.  We use the notation introduced at the beginning of this subsection with $a = T_\ell$ as in Definition~\ref{def-length-disk}. Recall in particular that $T_r$ for $r  \in [0,\ell]$ denotes the first time $X$ hits $-r$, and that $T([0,\ell])$ is the pre-image of $\bdy \op{BD}_{a,\ell}$ under the quotient map. 
\medskip
 
\noindent\textit{Step 1: bounds for the number of big excursions.} 
We will first bound the number of large time intervals which do not contain a point mapped to $\bdy \op{BD}_{a,\ell}$ by the quotient map, equivalently, one of the times $T_r$. For $r>0$, let $\wt T_r := \sup_{s < r} T_s$. Note that $d_X(\wt T_r , T_r) = 0$, with $d_X$ as in~\eqref{eqn-d_X}. Then the intervals $[\wt T_r , T_r]$ for $r \in [0,\ell]$ with $T_r > \wt T_r$ are precisely the excursion intervals for $X$ away from its running infimum. 

The time lengths of the excursions of $X$ away from its running infimum, parameterized by minus the running infimum of $X$, have the law of a Poisson point process on $\BB R$ with It\^o measure $\pi^{-1/2} t^{-3/2} \, dt$ (see, e.g.~\cite[Theorem~2.4 and Proposition~2.8, Section~XII]{revuz-yor}). Hence for $0\leq r_1 \leq r_2 \leq \ell$ and $A>0$, the law of $\# \left\{r \in [r_1,r_2] \,:\, T_r - \wt T_r >  A \right\}$ is Poisson with mean $2 \pi^{-1/2} (r_2-r_1) A^{-1/2} $. 
 By~\eqref{eqn-poisson-tail} and the union bound, for each $k\in \BB N$ it holds except on an event of probability decaying faster than any exponential function of $k$ that 
\eqb \label{eqn-few-big-jump}
 \#\left\{ r\in [(j-1) 2^{-k } ,  j 2^{-k}] \,:\, T_r - \wt T_r > \ell^2 2^{-2k} \right\} \leq k ,\quad \forall j \in [1,2^k]_{\BB Z}. 
\eqe  

For $n\in\BB N$, let $\mcl R_n$ be the set of $r\in [0, \ell]$ for which $T_r - \wt T_r \in [(n+1)^{-1} \ell^2  , n^{-1} \ell^2  ]$. Then $\# \mcl R_n$ is Poisson with mean $2\pi^{-1/2}   ( (n+1)^{1/2} - n^{1/2}) \asymp n^{-1/2}$. Hence, by~\eqref{eqn-poisson-tail}, except on an event of probability decaying faster than any power of $n$, we have
\eqb \label{eqn-total-jump}
\# \mcl R_m \leq  \log(m+1)  ,\quad \forall m \geq n .
\eqe
\medskip
 
\noindent\textit{Step 2: variation of $Z$ over the complement of the large excursion intervals.}
We now consider how much the label process $Z$ can vary if we ignore the big excursions of $X$ away from its running infimum. In particular, we will show that 
except on an event of probability decaying faster than any power of $n$,  
\eqb \label{eqn-snake-sup}
\sup\left\{ |Z_s - Z_t|  \,:\, s,t\in    [T_{r_1}  ,T_{r_2}] \setminus  \bigcup_{m=1}^{n-1} \bigcup_{r\in \mcl R_m} [\wt T_r , T_r] \right\} \leq 4 \ell^{1/2}  n^{-1/4} (\log n)^{\frac34 + \zeta}  
\eqe 
simultaneously for each $0 \leq r_1 < r_2 \leq \ell$ with $r_2 - r_1 \leq \ell n^{-1/2}$. 
Recall from~\eqref{eqn-bd-label-def} that $  Z  = Z^0 + \frk b_{T^{-1}(\cdot)}$, where $Z^0$ is as in~\eqref{eqn-Z^0-def} and~$\frk b$ is equal to~$\sqrt 3$ times a Brownian bridge from~$0$ to~$0$ in time~$\ell$. We will bound $Z^0$ and $\frk b$ separately. 
  
If we condition on $T|_{[0,\ell]}$ then the conditional law of the processes\\
$\left\{(X - X_{\wt T_r} , Z^0)|_{[\wt T_r , T_r]} \,:\, r \in [0,\ell] ,\: T_r > \wt T_r \right\}$ is described as follows. These processes for different choices of $r$ are conditionally independent given $T|_{[0,\ell]}$; the conditional law of each $(X - X_{\wt T_r})|_{[\wt T_r , T_r]}$ is that of a Brownian excursion with time length $T_r - \wt T_r$; and each $Z^0|_{[\wt T_r , T_r]}$ is the head of the Brownian snake driven by $(X - X_{\wt T_r})|_{[\wt T_r , T_r]}$. By the large deviation estimate for the head of a Brownian snake driven by a standard Brownian excursion~\cite[Proposition~14]{serlet-snake} and two applications of Brownian scaling, we find that for each $r\in [0,\ell]$ and each $A>1$,  
\eqb \label{eqn-snake-tail}
\BB P \left[ \sup_{t\in [\wt T_r , T_r]} |Z^0_s - Z^0_t| > A (T_r - \wt T_r)^{1/4} \,|\,  T|_{[0,\ell]}  \right] \leq \exp\left( - \frac{3}{2} (1+o_A(1))  A^{4/3}  \right)
\eqe
at a universal rate as $A \to \infty$. 

If the event in~\eqref{eqn-total-jump} occurs for some $n\in\BB N$ (which we emphasize is determined by $T|_{[0,\ell]}$), then~\eqref{eqn-snake-tail} implies that with
\begin{align}  \label{eqn-snake-event}
E_n :=  \left\{ \exists r\in  \bigcup_{m=n}^\infty \mcl R_m \:\text{with}\: \sup_{t\in [\wt T_r , T_r]} |  Z^0_t| >    \ell^{1/2} n^{-1/4} (\log n)^{\frac{3}{4} +\zeta}   \right\}
\end{align}
we have
\begin{align} \label{eqn-snake-tail'}
\BB P\left[ E_n \,|\, T|_{[0,\ell]}  \right] 
&\leq \sum_{m=n}^\infty  \log(m+1) \exp\left( - \frac{3}{2}  (1+o_n(1))(\log n)^{1 + \frac{4}{3}\zeta}  (m/n)^{1/3}      \right)  \notag\\
&= o_n(n^{-p}) ,\quad \forall p > 0 .
\end{align}

Since $\frk b$ is equal to $\sqrt 3$ times a Brownian bridge, by a straightforward Gaussian estimate it holds except on an event of probability decaying faster than any power of~$n$ that whenever $0 \leq r_1\leq r_2 \leq \ell$ with $r_2-r_1 \leq \ell n^{-1/2}$, 
\eqb \label{eqn-bridge-tail}
\sup_{\rho_1,\rho_2 \in [r_1,r_2]} |\frk b_{\rho_1} - \frk b_{\rho_2}| \leq  \ell^{1/2} n^{-1/4} (\log n)^{3/4}.
\eqe
(In fact, we could replace the power $3/4$ of $\log n$ above with any power strictly larger than $1/2$.)  If the event $E_n$ in~\eqref{eqn-snake-event} does not occur, then for any such $r_1 , r_2 \in [0,\ell]$ and any $t \in [T_{r_1}, T_{r_2}]$ for which $t \notin \bigcup_{m=1}^{n-1} \bigcup_{r \in \mcl R_m} [\wt T_r , T_r]$, we have either $Z_t^0 = 0$ or $t\in [\wt T_r , T_r]$ for some $r\in \bigcup_{m=n}^\infty \mcl R_m$. 
In either case $|Z_t^0| \leq \ell^{1/2} n^{-1/4} (\log n)^{3/4 + \zeta}$.
Hence~\eqref{eqn-snake-tail'}, \eqref{eqn-bridge-tail}, and the triangle inequality together imply~\eqref{eqn-snake-sup}. 
\medskip

\noindent\textit{Step 3: conclusion.} 
Suppose now that $k\in\BB N$,~\eqref{eqn-few-big-jump} occurs and~\eqref{eqn-snake-sup} occurs with $n =2^{2k}$.  Let $\BB p: [0,T_\ell] \rta \op{BD}_{*,\ell}$ be the quotient map.  Note that $\BB p(T([(j-1)2^{-k} \ell , j2^{-k} \ell ]))$ corresponds to the counterclockwise segment of the boundary of the disk of length $2^{-k} \ell$ which connects $\BB p(T_{(j-1)2^{-k} \ell})$ to $\BB p(T_{j 2^{-k} \ell})$.  We will use~\eqref{eqn-dist0-def} to bound the diameter of $\BB p(T([(j-1)2^{-k} \ell , j2^{-k} \ell ]))$. 

Fix $j\in[1,2^k]_{\BB Z}$ and $r_1 , r_2 \in [ (j-1) 2^{-k} \ell  ,  j 2^{-k} \ell ]$ with $r_1 < r_2$. 
By~\eqref{eqn-few-big-jump}, there are at most $k$ values of $r$ for which $[\wt T_r , T_r] \subset [T_{r_1}, T_{r_2}]$ and $T_r - \wt T_r \geq \ell^2 2^{-2k}$. 
Let $t_0  = s_1  = T_{r_1}$, $t_k = s_{k+1}  =T_{r_2}$, and for $i \in [2,k ]_{\BB Z}$ let $[t_{i-1} , s_i]$ be the $i$th interval $[\wt T_r , T_r] \subset [T_{r_1} , T_{r_2}]$ with $T_r - \wt T_r \geq \ell^2 2^{-2k}$, counted from left to right; or $t_{i-1} = s_i = T_{r_2}$ if there fewer than $i$ such intervals. Then $d_X(t_{i-1} , s_i) = 0$ for each $i\in [1,k+1]_{\BB Z}$ and (by~\eqref{eqn-few-big-jump}) each $r\in [r_1,r_2]\cap \bigcup_{m = 1 }^{2^{2k}} \mcl R_m $ satisfies $[\wt T_r , T_r] = [t_{i-1}, s_i]$ for some $i\in [2,k]_{\BB Z}$. Hence each of the intervals $(s_i , t_i)$ for $i\in [1,k]_{\BB Z}$ is disjoint from $\bigcup_{m=1 }^{2^{2k}} \bigcup_{r\in \mcl R_m} [\wt T_r , T_r] $.
   
By~\eqref{eqn-dist0-def} and~\eqref{eqn-snake-sup},  
\eqbn 
 d_{*,\ell}^0 \left(  T_{r_1} , T_{r_2} \right) \leq 2 \sum_{i=1}^k \sup_{s,t \in [s_i , t_i]} |Z_s - Z_t| \preceq \ell^{1/2} 2^{-k/2}  k^{\frac74 + \zeta}    , 
\eqen
with universal implicit constant, where in the first inequality we recall the definition~\eqref{eqn-d_Z} of $d_Z$. Since our choice of $r_1 , r_2 \in  [ (j-1) 2^{-k}  ,  j 2^{-k}]$ was arbitrary,
\eqb \label{eqn-disk-interval-dist}
\op{diam} \left( \BB p \left(T\left(\left[(j-1)2^{-k} \ell , j2^{-k} \ell\right]\right)\right) ; d_{*,\ell} \right) \preceq \ell^{1/2} 2^{-k/2}  k^{\frac74 + \zeta}     
\eqe 
where here we recall the construction of $d_{*,\ell}$ from $d_{*,\ell}^0$. 

By the Borel-Cantelli lemma, there a.s.\ exists $k_0 \in \BB N$ such that for $k\geq k_0$,~\eqref{eqn-few-big-jump} occurs and~\eqref{eqn-snake-sup} occurs with $n =2^{2k}$. In fact, if we take $k_0$ to be the smallest integer for which this is the case, then $\BB P[2^{k_0} > A]$ decays faster than any negative power of $A$. Suppose given $k\geq k_0$ and $x,y \in \bdy \op{BD}_{*,\ell}$ with $2^{-k-1} \leq \nu_{*,\ell}([x,y]_{\bdy \op{BD}_{*,\ell}}) \leq 2^{-k }$. By~\eqref{eqn-disk-interval-dist} and the triangle inequality,   
\eqbn
 d_{*,\ell} \left(x,y\right) \preceq \nu_{*,\ell} \left( [x,y]_{\bdy \op{BD}_{*,\ell}} \right)^{1/2}  \left( \left| \log \nu_{*,\ell} \left( [x,y]_{\bdy \op{BD}_{*,\ell}} \right)  \right| + 1 \right)^{\frac74 + \zeta}
 \eqen
with the implicit constant depending only on $\ell$. Taking $C$ to be equal to $2^{k_0}$, say, times this implicit constant (to deal with the case when $\nu_{*,\ell}([x,y]_{\bdy \op{BD}_{*,\ell}}) >2^{-k_0}$), we get the desired continuity estimate in the case of a random-area disk.

The fixed area case follows since for any $a>0$, the law of $(\op{BD}_{a,\ell} , d_{a,\ell})$ is locally absolutely continuous with respect to the law of  $(\op{BD}_{*, 2\ell} , d_{*, 2\ell})$ conditioned on the positive probability event that $T_\ell > 2a$ (c.f.~\cite[Section~2.1]{bet-mier-disk}). 
\end{proof}

Next we need an estimate for the areas of metric balls, which is a straightforward consequence of the H\"older continuity of $Z$ and the upper bound for areas of metric balls in the Brownian map~\cite[Corollary~6.2]{legall-geodesics}. 

\begin{lem} \label{prop-ball-size}
Let $a ,\ell > 0$ and let $(\op{BD}_{a,\ell} , d_{a,\ell})$ be a Brownian disk with area $a$ and perimeter $\ell$. For each $u \in (0,1)$, there a.s.\ exists $C> 1$ such that the following is true. For each $\delta \in (0,1)$ and each $z\in \op{BD}_{a,\ell}$, we have
\eqb \label{eqn-ball-size-lower}
  \mu_{a,\ell} \left( B_\delta(z ; d_{a,\ell} ) \right) \geq C^{-1} \delta^{4+u}
\eqe 
and for each $z\in \op{BD}_{a,\ell}$ with $B_\delta(z ; d_{a,\ell} ) \cap \bdy \op{BD}_{a,\ell} = \emptyset$, we have
\eqb \label{eqn-ball-size-upper}
  \mu_{a,\ell} \left( B_\delta(z ; d_{a,\ell} ) \right) \leq C \delta^{4-u} .
\eqe 
\end{lem}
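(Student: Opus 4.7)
The plan is to treat the two bounds separately, with the lower bound following from regularity of the encoding function $Z$ and the upper bound obtained by transferring the known estimate for the Brownian map via local absolute continuity of the relevant quantum surface fields.

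For the lower bound~\eqref{eqn-ball-size-lower}, I would exploit the fact, noted just after the definition of $Z^0$, that $Z$ admits an $\alpha$-H\"older continuous modification for every $\alpha < 1/4$; by Kolmogorov's criterion this gives a.s.\ a random constant $K$ with $|Z_s - Z_t| \leq K |s-t|^\alpha$ for all $s,t$. Given $z = \BB p(t_0)$ and a small $\epsilon > 0$, for any $s$ with $|s-t_0|<\epsilon$ the shorter of the two arcs on $[0,a]$ (viewed as a circle) between $s$ and $t_0$ has length at most $\epsilon$ and $Z$ varies by at most $K\epsilon^\alpha$ on it. Hence $\max(\ul Z_{s,t_0},\ul Z_{t_0,s}) \geq Z_{t_0} - K\epsilon^\alpha$, and then by the definition~\eqref{eqn-d_Z} of $d_Z$ we get $d_Z(s,t_0) \leq 4K\epsilon^\alpha$. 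Since $d_{a,\ell}(\BB p(s),z) \leq d_{a,\ell}^0(s,t_0) \leq d_Z(s,t_0)$, the preimage $\BB p^{-1}(B_\delta(z;d_{a,\ell}))$ contains an interval of length at least $2(\delta/4K)^{1/\alpha}$. Taking Lebesgue measure yields $\mu_{a,\ell}(B_\delta(z;d_{a,\ell})) \geq C^{-1}\delta^{1/\alpha}$; choosing $\alpha = 1/(4+u)$ gives the claim, with the constant $C$ depending only on $K$ and hence a.s.\ finite.

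For the upper bound~\eqref{eqn-ball-size-upper}, I would invoke~\cite[Corollary~6.2]{legall-geodesics}, which gives the analogous bound $\mu(B_\delta(z)) \leq C\delta^{4-u}$ uniformly on the Brownian map, together with the equivalence of the Brownian disk with the $\sqrt{8/3}$-LQG disk from~\cite[Corollary~1.5]{lqg-tbm2}. At any interior point of a quantum disk the embedded field is locally absolutely continuous with respect to the corresponding restriction of the field of a quantum sphere (whose metric measure space is a Brownian map), so the Brownian map bound transfers. Concretely, I would fix a countable cover of the interior of $\op{BD}_{a,\ell}$ by regions $U_n$ each isometric (via the LQG embedding) to a deterministic Euclidean domain bounded away from $\bdy$, on which the law of the field is absolutely continuous with respect to the sphere field. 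On each $U_n$, the Le Gall bound applies to all balls contained in $U_n$, and the assumption $B_\delta(z;d_{a,\ell})\cap \bdy\op{BD}_{a,\ell} = \emptyset$ forces any such ball to be contained in finitely many of the $U_n$ by compactness of $\{d(\cdot,\bdy)\geq\delta\}$, giving a uniform constant $C$.

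The main obstacle is precisely the quantifier switch needed for the upper bound: we need a single random $C$ that works simultaneously for \emph{all} admissible $(z,\delta)$, not merely for each fixed ball. Naive local absolute continuity gives a Radon--Nikodym derivative whose tails one must control to upgrade an almost sure pointwise bound on the Brownian map to an almost sure uniform bound on the Brownian disk. I expect this can be handled either by using the scaling/locality properties of the $\sqrt{8/3}$-LQG metric recorded in Section~\ref{sec-lqg-metric-sphere} (which give uniform bounds on restricted regions) together with a union bound over a dyadic exhaustion of the interior by sets where Radon--Nikodym derivatives are bounded, or by a direct covering argument using the fact that by Lemma~\ref{prop-disk-bdy-holder} points within $d_{a,\ell}$-distance $\delta$ of $\bdy\op{BD}_{a,\ell}$ form a set whose size we can control.
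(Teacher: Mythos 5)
Your proof of the lower bound~\eqref{eqn-ball-size-lower} is essentially the paper's own argument: H\"older continuity of $Z$ with exponent $(4+u)^{-1}$ gives $d_{a,\ell}(\BB p(s),\BB p(t)) \leq C|s-t|^{(4+u)^{-1}}$, hence the preimage of $B_\delta(z;d_{a,\ell})$ under $\BB p$ contains an interval of Lebesgue measure $\succeq \delta^{4+u}$, and $\mu_{a,\ell}$ is the pushforward of Lebesgue measure. That part is correct and complete.

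The upper bound~\eqref{eqn-ball-size-upper} is where the genuine gap lies, and you have correctly located it but not closed it. Absolute continuity of laws does transfer the a.s.\ finiteness of $\sup_\delta\sup_z \mu(B_\delta(z))/\delta^{4-u}$ over balls contained in any \emph{fixed} region $U_n$, but your covering argument then yields a random constant $C_n$ for each $U_n$ with no control as $n\to\infty$, while the balls you must handle have centers arbitrarily close to $\bdy\op{BD}_{a,\ell}$ (the hypothesis only forces $\delta \leq d_{a,\ell}(z,\bdy\op{BD}_{a,\ell})$, and both quantities can be arbitrarily small simultaneously). Compactness of $\{d_{a,\ell}(\cdot,\bdy\op{BD}_{a,\ell})\geq\delta\}$ only produces a constant depending on $\delta$, which is exactly what the lemma forbids. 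Your two proposed fixes would require quantitative tail bounds on the Radon--Nikodym derivatives and on the random constant in Le Gall's estimate, neither of which you supply. The paper sidesteps all of this with a different transfer mechanism: by~\cite[Proposition~4.4]{tbm-characterization}, the complementary component of a metric ball in the Brownian map containing a second marked point, equipped with its internal metric, is a Brownian disk conditioned on its (random) area and perimeter. Every internal-metric ball of that disk is contained in the ambient Brownian map ball with the same center and radius, so the uniform bound of~\cite[Corollary~6.2]{legall-geodesics} transfers to \emph{all} disk balls at once with the very same constant --- no field-level absolute continuity and no covering is needed. The only remaining issue, that the embedded disk has random $(a,\ell)$ rather than the fixed values in the statement, is handled by absolute continuity in the parameters alone: one assumes for contradiction that the bad set of pairs $(a,\ell)$ has positive Lebesgue measure and shows it must then be charged by the law of the area and perimeter of the complementary component, contradicting the Brownian map bound.
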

\begin{proof}
It is easy to see from the Kolmogorov continuity criterion that the process $Z$ used in the definition of the Brownian disk is a.s.\ H\"older continuous with exponent $(4+u)^{-1}$. By this and~\eqref{eqn-d_Z}, there a.s.\ exists a random $C > 0$ such that 
\eqb \label{eqn-disk-holder}
d_{a,\ell} \left(\BB p(s) , \BB p(t) \right) \leq C  |t-s|^{(4+u)^{-1}}  ,\quad \forall s,t\in [0,a].
\eqe 
Since the quotient map $\BB p$ pushes forward Lebesgue measure on $[0,a]$ to $\mu_{a,\ell}$, we infer that for each $t\in [0,\ell]$ and each sufficiently small $\delta \in (0,1)$ (how small depends on $C$, $u$, and $a$),
\eqbn
\mu_{a,\ell} \left( B_\delta(\BB p(t) ; d_{a,\ell} ) \right) \geq C^{-1} \delta^{4+u} .
\eqen
Upon shrinking $C$, this implies~\eqref{eqn-ball-size-lower}.

We now prove~\eqref{eqn-ball-size-upper}.
By local absolute continuity of the process $(Z , X,\frk b)$ for different choices of $a$ and $\ell$, it suffices to prove that for Lebesgue a.e.\ pair $(a,\ell) \in (0,\infty)^2$, there a.s.\ exists $C>0$ so that~\eqref{eqn-ball-size-upper} holds. Suppose to the contrary that this is not the case. Then there is a positive Lebesgue measure set $\mcl A \subset  (0,\infty)^2$ such that for each $(a,\ell) \in \mcl A$, it holds with positive probability that  
\eqb \label{eqn-bad-disk-balls}
\sup_{\delta >0 } \sup_{ z\in \op{BD}_{a,\ell} } \frac{\mu_{a,\ell}(B_\delta(z;d_{a,\ell}) )}{\delta^{4-u}} = \infty .
\eqe 

Choose $A > 0$ such that the projection of $\mcl A$ onto its first coordinate intersects $[0,A]$ in a set of positive Lebesgue measure and let $(\op{BM}_A,d_A)$ be a Brownian map with area $A$ and let $\mu_A$ be its area measure. By~\cite[Corollary~6.2]{legall-geodesics}, a.s.\ 
\eqb \label{eqn-tbm-balls}
\sup_{\delta >0 } \sup_{z\in \op{BM}_A} \frac{\mu_A(B_\delta(z;d_A) )}{\delta^{4-u}} < \infty .
\eqe 
Now fix $r>0$ and let $z_0 ,z_1 \in \op{BM}_A$ be sampled uniformly from $\mu_A$. By~\cite[Proposition~4.4]{tbm-characterization} (c.f.~\cite[Theorem 3]{legall-disk-snake}), the complementary connected component $D$ of $B_r(z_0 ; d_A)$ containing $z_1$, equipped with the internal metric $d_{A,D}$ induced by $d_A$, has the law of a Brownian disk if we condition on its area and boundary length. With positive probability, the area and boundary length of $D$ belong to $\mcl A$. Since each $d_{A,D}$-metric ball is contained in a $d_{A }$-metric ball with the same radius, we see that~\eqref{eqn-bad-disk-balls} contradicts~\eqref{eqn-tbm-balls}.
\end{proof}

We will next prove a lower bound for the amount of area near a boundary arc of given length. 

\begin{lem} \label{lem-interval-nghbd}
Let $a ,\ell > 0$ and let $(\op{BD}_{a,\ell} , d_{a,\ell})$ be a Brownian disk with area $a$ and perimeter $\ell$. For each $u > 0$, there a.s.\ exists $c > 0$ such that for each $\delta \in (0,1)$ and each $x,y\in \bdy \op{BD}_{a,\ell}$, we have
\eqb \label{eqn-interval-nghbd}
\mu_{a,\ell} \left( B_\delta \left( [x,y]_{\bdy\op{BD}_{a,\ell}} ; d_{a,\ell} \right) \right) \geq c \delta^{2 + u}  \nu_{a,\ell} \left(  [x,y]_{\bdy\op{BD}_{a,\ell}} \right) .
\eqe 
\end{lem}

The idea of the proof of Lemma~\ref{lem-interval-nghbd} is to prove a lower bound for the number of time intervals of the form $[(k-1) 2^{-2n} , k 2^{-2n}]$ for $k\in\BB Z$ whose images under the quotient map $\BB p$ intersect $ [x,y]_{\bdy\op{BD}_{a,\ell}}$, where $n$ is chosen so that $2^{-n} = \delta^{2 + o_\delta(1) }$. The images of these intervals are disjoint, and each such interval has $\mu_{a,\ell}$-mass $2^{-2n}$ and is contained in $B_\delta \left( [x,y]_{\bdy\op{BD}_{a,\ell}} ; d_{a,\ell} \right) $ by~\eqref{eqn-disk-holder}. The right side of~\eqref{eqn-interval-nghbd} will turn out to be $2^{-2n}$ times the number of such intervals.
Our lower bound for the number of time intervals will follow from the following elementary estimate for Brownian motion. 
 
\begin{lem} \label{lem-inf-count-lower}
Let $B$ be a standard linear Brownian motion started from $0$ and for $r>0$, let $T_r := \inf\left\{t > 0 \,:\, B_t = -r\right\}$. For $\delta>0$, let $N_\delta$ be the number of intervals of the form $[(k-1)\delta , k\delta]$ for $k\in\BB N$ which intersect $\{T_r \,:\, r\in [0,1]\}$. There is a universal constant $c_0 > 0$ such that for each $\delta  \in (0,1)$ and each $\zeta \in (0, (2\pi)^{-1/2})$ we have
\eqbn
\BB P \left[ N_\delta < \zeta \delta^{-1/2} \right] \leq  \exp\left( - \frac{c_0 }{ \zeta \delta^{1/2}} \right) .
\eqen
\end{lem}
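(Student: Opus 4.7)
The plan is to realize $(T_r)_{r \geq 0}$ as the $1/2$-stable subordinator with L\'evy measure $\nu(dx) = (2\pi x^3)^{-1/2}\,dx$ on $(0,\infty)$, then reduce the lower bound on $N_\delta$ to a Chernoff estimate for the total length contributed by the small jumps of $T$. Specifically, decomposing the jumps of $T$ in $r \in [0,1]$ into those of size $\leq \delta$ and those of size $>\delta$, I set $L_\delta := \sum_{r \in [0,1],\, \Delta T_r \leq \delta} \Delta T_r$ and $S_\delta := T_1 - L_\delta$. The first step is the deterministic inequality $N_\delta \geq L_\delta/\delta$. The range $R := \{T_r : r \in [0,1]\}$ is a closed set whose complement in $[0,T_1]$ is the disjoint union of the open gaps $(T_{r-}, T_r)$ corresponding to jumps of $T$, so a $\delta$-interval $I_k = [(k-1)\delta, k\delta]$ meets $R$ unless it is strictly contained in one of these gaps; this requires $\Delta T_r > \delta$, and a gap of length $\Delta T_r$ contains at most $\Delta T_r/\delta$ such intervals strictly. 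Since the number of $I_k$ meeting $[0,T_1]$ is at least $T_1/\delta$, subtracting yields $N_\delta \geq T_1/\delta - S_\delta/\delta = L_\delta/\delta$.

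Next I would bound $\mathbb{P}[L_\delta < \zeta\sqrt\delta]$ via Chernoff. Writing the Laplace exponent of the infinitely divisible variable $L_\delta$ as $\phi(\lambda) = \int_0^\delta (1 - e^{-\lambda x})\,\nu(dx)$, Markov's inequality applied to $e^{-\lambda L_\delta}$ gives
\[
\mathbb{P}[L_\delta < \zeta\sqrt\delta] \leq \inf_{\lambda > 0} \exp\!\bigl(\lambda\zeta\sqrt\delta - \phi(\lambda)\bigr).
\]
The key ingredient is the rigorous lower bound $\phi(\lambda) \geq \sqrt{2\lambda} - \sqrt{2/(\pi\delta)}$, which follows from the identity $\int_0^\infty (1 - e^{-\lambda x})\,\nu(dx) = \sqrt{2\lambda}$ (the Laplace exponent of the full $T_1$) together with the trivial truncation estimate $\int_\delta^\infty (1 - e^{-\lambda x})\,\nu(dx) \leq \nu([\delta,\infty)) = \sqrt{2/(\pi\delta)}$. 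The choice $\lambda = 1/(2\zeta^2\delta)$ minimizes $\lambda\zeta\sqrt\delta - \sqrt{2\lambda}$ at the value $-1/(2\zeta\sqrt\delta)$, leading to
\[
\mathbb{P}[L_\delta < \zeta\sqrt\delta] \leq \exp\!\left(-\frac{1}{2\zeta\sqrt\delta} + \sqrt{\frac{2}{\pi\delta}}\right).
\]

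The final step is to check that this exponent is at most $-c_0/(\zeta\sqrt\delta)$ for some universal $c_0 > 0$. Multiplying through by $\zeta\sqrt\delta$, the inequality $-1/(2\zeta\sqrt\delta) + \sqrt{2/(\pi\delta)} \leq -c_0/(\zeta\sqrt\delta)$ reduces to $c_0 \leq 1/2 - \zeta\sqrt{2/\pi}$, and the hypothesis $\zeta < (2\pi)^{-1/2}$ gives $\zeta\sqrt{2/\pi} < 1/\pi$, so $c_0 := 1/2 - 1/\pi > 0$ suffices. The degenerate case $\zeta/\sqrt\delta < 1$ is covered automatically since $T_0 = 0 \in I_1 \cap R$ forces $N_\delta \geq 1$ almost surely. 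The delicate point in this strategy is that the truncation bound on $\int_\delta^\infty (1 - e^{-\lambda x})\,\nu(dx)$ is essentially sharp at the optimizing $\lambda$, and this tightness is precisely what forces both the hypothesis $\zeta < (2\pi)^{-1/2}$ and the specific admissible constant $c_0$; enlarging the allowed range of $\zeta$ would require a more precise evaluation of the truncation correction rather than just the trivial estimate by $\nu([\delta,\infty))$.
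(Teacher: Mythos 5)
Your proof is correct, but it follows a genuinely different route from the one in the paper. The paper's argument is a renewal-type argument: it defines $\rho_0=0$ and inductively lets $\rho_j$ be the first $r>\rho_{j-1}$ with $T_r-T_{\rho_{j-1}}\ge\delta$, observes that the points $T_{\rho_j}$ land in distinct grid intervals so that $N_\delta\ge J$ (the number of renewals with $\rho_j\le 1$), notes that the increments $\rho_j-\rho_{j-1}$ are i.i.d.\ copies of $-\inf_{[0,\delta]}B \overset{d}{=} |N(0,\delta)|$, and concludes with Hoeffding's inequality for sums of sub-Gaussian variables. You instead use the L\'evy--It\^o structure of the $1/2$-stable subordinator: the deterministic inequality $N_\delta\ge L_\delta/\delta$ (obtained by subtracting, from the $\ge T_1/\delta$ grid intervals meeting $[0,T_1]$, the at most $S_\delta/\delta$ intervals swallowed by gaps longer than $\delta$) reduces everything to a lower-tail Chernoff bound for the small-jump part $L_\delta$, which you carry out exactly using the Laplace exponent $\sqrt{2\lambda}$ and the crude truncation bound $\int_\delta^\infty(1-e^{-\lambda x})\,\nu(dx)\le\nu([\delta,\infty))=\sqrt{2/(\pi\delta)}$. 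All the computations check out (in particular the optimization at $\lambda=1/(2\zeta^2\delta)$ and the verification that $c_0=1/2-1/\pi$ works), and the hypothesis $\zeta<(2\pi)^{-1/2}$ enters your argument for the same structural reason it enters the paper's (the target must sit a definite margin below the mean: $\BB E[\rho_1]=\sqrt{2\delta/\pi}$ there, $\BB E[L_\delta]=\sqrt{2\delta/\pi}$ here). What each approach buys: the paper's is more elementary, needing only the reflection principle and an off-the-shelf concentration inequality, and in particular does not require identifying the L\'evy measure; yours yields fully explicit constants and makes transparent exactly why the threshold on $\zeta$ appears. One cosmetic point: the set $\{T_r: r\in[0,1]\}$ is not quite closed (it omits one endpoint of each gap), but since a grid interval avoiding the range is still contained in a half-open gap of the same length as the corresponding jump, your count of excluded intervals is unaffected.
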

\begin{proof}
Let $\rho_0 = 0$ and for $j \in\BB N$ inductively let $\rho_j$ be the smallest $r > \rho_{j-1}$ for which $T_r - T_{\rho_{j-1}} \geq \delta$.
Let $J$ be the largest $j \in\BB N$ for which $\rho_j \leq 1$. 
Each of the times $T_{\rho_j}$ for $j\in [1,J]_{\BB Z}$ lies in a distinct interval of the form $[(k-1)\delta , k\delta]$ for $k\in \BB N$. 
Hence $N_\delta \geq J$. 

By the strong Markov property for standard Brownian motion, the random variables $\rho_j -  \rho_{j-1} $ for $j\in\BB N$ are i.i.d.  Observe that $\rho_1 - \rho_0 = - \inf_{t\in [0,\delta]} B_t$ because if $r < -\inf_{t \in [0,\delta]} B_t$ then $T_r < \delta$ and if $r > -\inf_{t \in [0,\delta]} B_t$ then $T_r > \delta$.  Consequently, it follows that each of these random variables has the law of the absolute value of a Gaussian random variable with mean $0$ and variance $\delta $ (which has mean $(2/\pi)^{1/2} \delta^{1/2}$). 
By Hoeffding's inequality for sums of independent random variables with sub-Gaussian tails (see, e.g.,~\cite[Proposition~5.10]{vershynin-intro}), for $m \in \BB N$ and $R>0$,
\eqbn
\BB P \left[ \rho_m > R +  (2/\pi)^{1/2} \delta^{1/2} m  \right] 
\leq   \exp\left( - \frac{ c_1   R^2 }{ \delta  m  }  \right) 
\eqen 
with $c_1 > 0$ a universal constant. Therefore, for $\zeta \in (0, (2\pi)^{-1/2})$ we have
\eqbn
\BB P \left[ N_\delta <  \zeta \delta^{-1/2} \right] 
\leq \BB P \left[ J <    \zeta \delta^{-1/2} \right] 
= \BB P \left[ \rho_{\lfloor \zeta \delta^{-1/2}\rfloor}  > 1 \right] 
\leq  \exp\left( - \frac{c_0 }{ \zeta \delta^{1/2}} \right)  
\eqen
with $c_0 > 0$ as in the statement of the lemma.
\end{proof}

\begin{proof}[Proof of Lemma~\ref{lem-interval-nghbd}]
In light of Lemma~\ref{prop-ball-size} (applied with  $u/2$ in place of $u$), we can restrict attention to arcs $[x,y]_{\bdy \op{BD}_{a,\ell}}$ satisfying
\eqbn
 \nu_{a,\ell} \left(  [x,y]_{\bdy\op{BD}_{a,\ell}} \right) \geq \delta^{ 2-u/2 }.
 \eqen
 
We start by working with a random-area Brownian disk $(\op{BD}_{*,\ell} , d_{*,\ell})$ as in Definition~\ref{def-length-disk}. 
Fix a small parameter $v \in (0,u)$ to be chosen later, depending only on $u$. 
For $n,m\in\BB N$ with $n \geq  m$ and $j\in [1,2^m]_{\BB Z}$, let $N_{m,j}^n$ be the number of intervals of the form $[(k-1)2^{-2n} , k 2^{-2n}]$ for $k\in\BB N$ which intersect $\{T_r \,:\, r\in [ (j-1) 2^{-m} \ell , j 2^{-m} \ell ]\}$.   
Let $E_m $ be the event that $N_{m,j}^n \geq \lfloor \ell 2^{ (1-v) n  -m} \rfloor $ for each $n  \geq  m$ and all $j\in [1,2^m]_{\BB Z}$.  
By Lemma~\ref{lem-inf-count-lower} (applied with $\delta = 2^{2m-2n} \ell^{-2}$ and $\zeta = 2^{-v n}$) and scale and translation invariance,
\eqbn
\BB P \left[ N_{m,j}^n <  \lfloor \ell  2^{(1-v) n  -m} \rfloor   \right] \leq  \exp\left( - c_0 \ell 2^{(1+v) n-m}   \right) .
\eqen
By the union bound,
\eqbn
\BB P \left[ E_m^c \right] \leq  2^m \sum_{n= m}^\infty \exp\left( - c_0 \ell  2^{(1+v)n-m}    \right) \preceq 2^m \exp\left( -  c_0 \ell 2^{v m}  \right)
\eqen
with implicit constant depending only on $\ell$. By the Borel-Cantelli Lemma, a.s.\ there exists $m_0 \in \BB N$ such that $E_m$ occurs for each $m\geq m_0$. 

Let $C  >0$ be a random constant chosen so that~\eqref{eqn-disk-holder} holds for $\op{BD}_{*,\ell}$, with $v/100$ in place of $u$.
Suppose we are given $\delta > 0$ with $\delta^2 \in (0, 2^{- (1+2v)m_0  }]$ and $x,y\in \bdy \op{BD}_{a,\ell}$ with 
\eqb \label{eqn-interval-length-bdy}
\delta^{ \frac{2}{1+2v}} \leq \nu_{a,\ell} \left(  [x,y]_{\bdy \op{BD}_{a,\ell}} \right) \leq 2^{-m_0-1} .
\eqe 
Then we can choose
$n \geq m \geq   m_0  $ with 
\begin{align} \label{eqn-interval-nm}
&2^{-m +1  } \leq \nu_{*,\ell} \left(  [x,y]_{\bdy\op{BD}_{*,\ell}} \right) \leq 2^{-m +2} \notag\\
&\quad \op{and} \quad  2^{- (1-v) n} \leq C^{-100} \delta^2 \leq 2^{- (1-v)(n-1)} .
\end{align}

For some $j\in [1,2^m]_{\BB Z}$, the boundary arc $[x,y]_{\bdy \op{BD}_{*,\ell}}$ contains the image under the quotient map $\BB p$ of the set $\{T_r \,:\, r\in [ (j-1) 2^{-m} \ell , j 2^{-m} \ell ]\}$.  By definition of $E_m $, there are at least $\lfloor \ell  2^{ (1-v) n  -m} \rfloor$ intervals of the form  $[(k-1)2^{-2n} , k 2^{-2n}]$ for $k\in [1, \ell 2^{2n}]_{\BB Z}$ whose images under $\BB p$ intersect $[x,y]_{\bdy \op{BD}_{*,\ell}}$. By~\eqref{eqn-disk-holder} and our choice of $n$, the image of each of these intervals under $\BB p$ has $d_{*,\ell}$-diameter at most 
\eqbn
C 2^{-2(4+v/100)^{-1} n} \leq C 2^{-(1-v) n/2} \leq \delta ,
\eqen
 where the latter inequality is by~\eqref{eqn-interval-nm}. Hence each such image lies in $B_\delta\left( [x,y]_{\bdy\op{BD}_{*,\ell}} ; d_{*,\ell} \right)$. By combining this with~\eqref{eqn-interval-nm}, we find that for an appropriate random choice of $  c>0$,
\begin{align}
\mu_{*,\ell} \left( B_\delta \left( [x,y]_{\bdy\op{BD}_{*,\ell}} ; d_{*,\ell} \right) \right)
& \geq  \lfloor \ell  2^{ (1-v) n  -m} \rfloor \times 2^{-2n}   \notag\\
&\geq (\ell/2) 2^{-(1+v)n-m} 
\geq  c \delta^{\tfrac{2(1+v)}{1-v}}   \nu_{*,\ell} \left(  [x,y]_{\bdy\op{BD}_{*,\ell}} \right)  .
\end{align} 
This holds simultaneously for each boundary arc $[x,y]_{\bdy \op{BD}_{*,\ell}}$ satisfying~\eqref{eqn-interval-length-bdy}. 
By choosing $v$ sufficiently small that $2(1+v)/(1-v) \leq 2+u$ and shrinking $c$ in a manner depending on $m_0$, we obtain the statement of the lemma for a random-area Brownian disk. The statement for a fixed area Brownian disk follows by local absolute continuity.
\end{proof}

\section{Metric gluing}
\label{sec-metric-gluing}

In this section we will complete the proofs of the theorems stated in Section~\ref{sec-results}.  In Section~\ref{sec-geodesic-hit}, we will show using~\cite[Lemma 18]{bet-mier-disk} that $\sqrt{8/3}$-LQG geodesics between quantum typical points a.s.\ do not hit the boundary of the domain, which is one of the main inputs in the proofs of our main results.  Next, in Section~\ref{sec-wedge-gluing}, we will prove Theorem~\ref{thm-wedge-gluing}, noting that the proof of Theorem~\ref{thm-cone-gluing} is essentially identical.  Finally, in Section~\ref{sec-general-gluing}, we will deduce Theorems~\ref{thm-peanosphere-gluing} and~\ref{thm-peanosphere-gluing-finite}. 
 
Throughout this section, we define for $\rho > 1$
\eqb  \label{eqn-box-def}
\BB V_\rho := \left\{ z\in B_\rho(0) \,:\,  \im z > \rho^{-1} \right\}  \enskip \op{and}  \enskip \BB V_\rho' := \left\{ z\in B_\rho(0) \,:\, |z| > \rho^{-1} \right\}  .
\eqe

\subsection{LQG geodesics cannot hit the boundary}
\label{sec-geodesic-hit}
 
To prove our main results, we want to apply the estimates for the Brownian disk obtained in Section~\ref{sec-disk-estimate}. 
In order to apply these estimates, we need to ensure that we can restrict attention to finitely many quantum surfaces that locally look like quantum disks (equivalently, Brownian disks). 

If we are in the setting of Theorem~\ref{thm-wedge-gluing} with either $\frk w^- < 4/3$ or $\frk w^+ < 4/3$, the SLE$_{8/3}(\frk w^- -2;\frk w^+ -2)$ curve $\eta$ will intersect $\BB R$ in a fractal set (see \cite{miller-wu-dim} for a computation of the dimension of this set), so there will be infinitely many elements of $\mcl U^- \cup \mcl U^+$ (i.e., connected components of $\BB H\setminus \eta$) contained in small neighborhoods of certain points of $\BB R$. However, since $\eta$ does not intersect itself and is a.s.\ continuous and transient, there are only finitely many connected components of $\BB H\setminus \eta$ which intersect each of the sets $\BB V_\rho$ of~\eqref{eqn-box-def}. 
Hence one way to avoid dealing with infinitely many elements of $\mcl U^- \cup \mcl U^+$ is to work in a bounded set at positive distance from $\BB R$. The following lemma will allow us to do so. For the statement, we recall the parameter $Q$ from~\eqref{eqn-lqg-coord}.

\begin{lem} \label{prop-geodesic-bdy}
Suppose that $h$ is either a free boundary GFF on $\BB H$ plus $-\alpha\log|\cdot|$ for $\alpha \leq Q$, an embedding into $(\BB H , 0, \infty)$ of an $\alpha$-quantum wedge for $\alpha \leq Q$, or an embedding into $(\BB H , 0, \infty)$ of a single bead of an $\alpha$-quantum wedge for $\alpha \in (Q , Q + \sqrt{2/3})$ with area $\frk a >0$ and left/right boundary lengths $\frk l^- , \frk l^+ >0$. 
Let $R>1$ and let $z_1,z_2$ be sampled uniformly from $\mu_h|_{\BB V_R}$, normalized to be a probability measure, where $\BB V_R$ is as in~\eqref{eqn-box-def}. Almost surely, there is a unique $\frk d_h$-geodesic $\gamma_{z_1,z_2}$ from $z_1$ to $z_2$, and a.s.\ $\gamma_{z_1, z_2}$ does not intersect $\BB R \cup \{\infty\}$. 
\end{lem}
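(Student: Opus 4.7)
My plan is to reduce, via local absolute continuity, to the case where $h$ is an embedding of a quantum disk, and then use the isometry between the quantum disk and the Brownian disk together with known geodesic properties of the Brownian map. First, since $\BB V_R$ is at positive Euclidean distance from $\BB R \cup \{0, \infty\}$, in each of the three settings of the lemma the joint law of $(z_1, z_2)$ together with $h|_U$, for any fixed bounded open $U \subset \BB H$ at positive Euclidean distance from $\{0, \infty\}$, is mutually absolutely continuous with respect to the corresponding object obtained from an embedding $\wt h$ of a quantum disk into $(\BB H, 0, \infty)$. By Lemma~\ref{prop-internal-equal} the internal metric of $\frk d_h$ on $U$ equals $\frk d_{h|_U}$, and by Lemma~\ref{prop-metric-bdy} the metric $\frk d_h$ extends continuously to $\BB H$ together with those parts of $\BB R$ at positive Euclidean distance from the marked points, so every $\frk d_h$-ball of finite radius is relatively compact in this extension. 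Exhausting $\BB H$ by such bounded sets $U$ and using the compactness of the closure of $B_L(z_1;\frk d_h)$ for $L = \frk d_h(z_1, z_2)$, the statement reduces to the case where $h$ embeds a quantum disk, in which case $(\BB H, \frk d_h)$ is isometric to a Brownian disk by \cite[Corollary~1.5]{lqg-tbm2}.

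For uniqueness, I would first argue that any $\frk d_h$-geodesic between $z_1$ and $z_2$ which stays in $\BB H$ is a.s.\ unique: by Le Gall's theorem \cite{legall-geodesics}, between two points sampled uniformly from the area measure of the Brownian map there is a.s.\ a unique geodesic, and by local absolute continuity of the Brownian disk with the Brownian map on compact interior subsets, this uniqueness carries over to interior geodesics in the Brownian disk. For the non-intersection with $\{\infty\}$, in the wedge cases $\frk d_h(z_i, \infty) = \infty$ (infinite volume), while in the thin-wedge bead case $\infty$ is a single marked boundary point which is a.s.\ not on $\gamma_{z_1, z_2}$ because for area-typical $(z_1, z_2)$ the event that the geodesic passes through any fixed point has probability zero.

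The main content is the non-intersection with $\BB R$. Suppose for contradiction that with positive probability some geodesic hits $\BB R$ at a point $w$. Then by the concatenation identity $\frk d_h(z_1, z_2) = \frk d_h(z_1, w) + \frk d_h(w, z_2)$, the point $z_2$ is the endpoint of a geodesic from $z_1$ through $w$. The plan is to show that, for fixed $z_1$, the $\mu_h$-measure of the set of $z_2 \in \BB V_R$ for which this occurs is zero, by disintegrating over $w \in \BB R$ against the boundary measure $\nu_h$ and estimating the image of the geodesic-extension map sending $w$ to the tip of the length-$\frk d_h(z_1, z_2)$ geodesic from $z_1$ through $w$: each such image is a one-dimensional curve, and the boundary $\BB R$ is one-dimensional under $\nu_h$, so a Fubini-style argument should yield the desired null statement.

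The main obstacle is making this last step rigorous. While the heuristic dimension count is compelling, a careful proof requires quantitative estimates on the Brownian disk's geodesic-tree structure rooted at a typical interior point and on the interaction of these geodesics with the boundary. These would likely be obtained via the Brownian snake encoding $(X, Z)$ of the Brownian disk and arguments analogous to those of Le Gall in \cite{legall-geodesics}, adapted to the presence of a boundary.
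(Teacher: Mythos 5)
Your overall strategy (reduce by absolute continuity to the quantum disk, then use the Brownian disk) matches the paper's, but there are two genuine gaps. First, your reduction silently fails at the marked point $0$. The field carries a $-\alpha\log|\cdot|$ singularity there, so the law of $h|_U$ is \emph{not} absolutely continuous with respect to the restriction of a quantum disk embedding on any neighborhood of $0$ (the log term has infinite Dirichlet energy near $0$); absolute continuity holds only for $U$ at positive distance from $0$ and $\infty$. You restrict to such $U$ but never explain why the geodesic avoids $0$ — and it cannot be dismissed as a measure-zero boundary point handled by the Brownian-disk argument, precisely because that argument is unavailable near $0$. Note that $0$ is at \emph{finite} $\frk d_h$-distance from the interior (Lemma~\ref{prop-metric-bdy}), so compactness of metric balls does not exclude it. The paper devotes Lemmas~\ref{lem-diam-dist-pos} and~\ref{prop-free-gff-hit0} to exactly this point: one shows by a Cameron--Martin shift that a ``shielding'' event (the diameter of a circle $\bdy B_{r_{j-1}}(0)\cap\BB H$ is smaller than its distance to the neighboring circles) has positive probability, and the other iterates this at a sequence of scales with a Borel--Cantelli argument to conclude that a.s.\ no geodesic reaches $0$. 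Some such argument is needed and is entirely absent from your proposal.

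Second, your treatment of the main step — that a geodesic between two area-typical interior points of a Brownian disk avoids the boundary — is only a heuristic dimension count, and you concede you cannot make it rigorous. This fact is already in the literature: it is \cite[Lemma~18]{bet-mier-disk}, which also gives the uniqueness of the geodesic, so no separate appeal to Le Gall's theorem for the Brownian map is needed. With that citation your reduction (modulo the issue at $0$ and the analogous point at $\infty$, which the paper handles in the bead case by the symmetry $(\BB H , h , 0,\infty) \eqD (\BB H , h, \infty , 0)$ rather than by a ``fixed point has probability zero'' claim) would close. As written, however, the proposal leaves both the hardest analytic input and the singular-point issue unresolved.
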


We will eventually deduce Lemma~\ref{prop-geodesic-bdy} from~\cite[Lemma~18]{bet-mier-disk} and local absolute continuity of $h$ with respect to an embedding of a quantum disk, which is isometric to a Brownian disk. (It is also possible to give a slightly longer proof using only SLE/LQG theory.) However, due to the presence of the log singularity at $0$ this absolute continuity only holds away from $0$ and $\infty$ so we first need to rule out the possibility that $\frk d_h$-geodesics hit $0$ or $\infty$ with positive probability. 
By an absolute continuity argument, it suffices to prove this in the case when $h$ is a free boundary GFF with a log singularity at  $0$. 
This is the purpose of the next two lemmas. 

The proof of the following lemma illustrates a general technique which can be used to show that various events associated with the $\sqrt{8/3}$-LQG metric induced by some variant of the GFF occur with positive probability. 

\begin{lem} \label{lem-diam-dist-pos}
Let $h$ be a free-boundary GFF on a simply connected domain $D\subset \BB C$. Let $A_1,A_2\subset \ol{D}$ be deterministic disjoint compact sets and let $c   > 0$. Then with positive probability, the $\sqrt{8/3}$-LQG metric $\frk d_{h}$ satisfies  
\eqb \label{eqn-diam-dist-pos}
\op{diam} \left( A_1 ; \frk d_{h}\right) \leq c \frk d_h \left( A_1, A_2 \right).
\eqe 
\end{lem}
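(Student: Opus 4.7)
The plan is to combine a Cameron--Martin shift of $h$ with the locality and scaling properties of the $\sqrt{8/3}$-LQG metric. For the free-boundary GFF, any smooth bounded function $\phi$ on $\ol D$ with finite Dirichlet energy lies in the Cameron--Martin space, so the laws of $h$ and $h+\phi$ are mutually absolutely continuous. It therefore suffices to exhibit such a $\phi$ for which the event
\[
\op{diam}_{\frk d_{h+\phi}}(A_1) \leq c\, \frk d_{h+\phi}(A_1,A_2)
\]
has positive (in fact, probability close to $1$) probability, and then transfer this to positive probability for the unshifted field.

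To construct $\phi$, I would choose bounded open sets $V_1, V_3 \subset \ol D$ with $A_1 \subset V_1$, $\ol V_1 \cap \ol V_3 = \emptyset$, $\ol V_3 \cap A_2 = \emptyset$, and such that $V_3$ separates $V_1$ from $A_2$ in $\ol D$ (which is possible since $A_1, A_2$ are disjoint compact subsets of $\ol D$ and $D$ is simply connected). For a parameter $M>0$ to be chosen, take $\phi$ smooth and bounded with $\phi \equiv -M$ on a neighborhood of $A_1$ contained in $V_1$, $\phi \equiv +M$ on a sub-annulus of $V_3$ that still separates $V_1$ from $A_2$, interpolated smoothly elsewhere and with compact support in $\ol D$.

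The key estimates come from the local scaling relation $\frk d_{h+R} = e^{\sqrt{8/3}R/4}\frk d_h$ applied to internal metrics: since $(h+\phi)|_{V_1} = h|_{V_1} - M$ and $(h+\phi)|_{V_3} = h|_{V_3} + M$, one has $\frk d_{(h+\phi)|_{V_1}} = e^{-\sqrt{8/3}M/4}\frk d_{h|_{V_1}}$ and $\frk d_{(h+\phi)|_{V_3}} = e^{\sqrt{8/3}M/4}\frk d_{h|_{V_3}}$. Bounding $\op{diam}_{\frk d_{h+\phi}}(A_1)$ by the (larger) internal diameter in $V_1$ yields
\[
\op{diam}_{\frk d_{h+\phi}}(A_1) \leq e^{-\sqrt{8/3}M/4}\op{diam}_{\frk d_{h|_{V_1}}}(A_1).
\]
For the lower bound, any path in $\ol D$ from $A_1$ to $A_2$ must traverse $V_3$ from one boundary component to the other, and since $\frk d_{h+\phi}$ is a length metric the $\frk d_{h+\phi}$-length of the sub-path in $V_3$ equals its $\frk d_{(h+\phi)|_{V_3}}$-length, which is at least $e^{\sqrt{8/3}M/4} L_3$, where $L_3$ is the $\frk d_{h|_{V_3}}$-distance between the two boundary components of the inner sub-annulus. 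Hence the ratio of the two sides of the displayed inequality is at most $e^{-\sqrt{8/3}M/2}\op{diam}_{\frk d_{h|_{V_1}}}(A_1)/L_3$.

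The last step is to observe that $\op{diam}_{\frk d_{h|_{V_1}}}(A_1)<\infty$ and $L_3>0$ almost surely, which follows from local absolute continuity of the free-boundary GFF with respect to an embedding of a quantum disk, combined with Lemma~\ref{prop-metric-bdy} to handle the portions of $A_1$ and $\bdy V_3$ that may touch $\bdy D$. Once this is in hand, taking $M$ large enough forces the ratio to be at most $c$ with probability arbitrarily close to $1$ under the shifted law, and Cameron--Martin transfers this to positive probability under the law of $h$. The main obstacle I anticipate is in the verification of the length/internal-metric identifications across $V_3$ and the continuous extension of the internal metrics to the parts of $A_1$ and $\bdy V_3$ lying on $\bdy D$; these are the steps that rely crucially on Lemmas~\ref{prop-internal-equal} and~\ref{prop-metric-bdy} and on the length-metric nature of $\frk d_h$.
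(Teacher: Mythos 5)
Your proposal is correct and follows essentially the same route as the paper: a smooth $\pm M$ shift of the field on two disjoint regions, the scaling relation $\frk d_{h+R}=e^{\sqrt{8/3}R/4}\frk d_h$ applied to the relevant internal metrics, and absolute continuity of $h+\phi$ with respect to $h$ to transfer positive probability back to the unshifted field. The only (immaterial) difference is that the paper places the $+M$ bump on a neighborhood $U_2$ of $A_2$ and lower-bounds $\frk d_{h+\phi}(A_1,A_2)$ by $\frk d_{h+\phi}(A_2,\bdy U_2)$ rather than by the crossing distance of a separating annulus; one small point to tidy is that you should take $\phi\equiv -M$ on the entire set whose internal metric you rescale (not merely on a sub-neighborhood of $A_1$ inside $V_1$), or else apply the diameter bound to the internal metric of that sub-neighborhood.
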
 
\begin{proof}
Let $U_1 , U_2  \subset \ol D$ be bounded connected relatively open sets such that $A_1 \subset U_1  $, $A_2\subset U_2  $, and $\ol U_1  \cap \ol U_2  = \emptyset$.  
We note that $h|_{U_1}$ has the law of a GFF on $U_1$ plus a random harmonic function. Hence the $\sqrt{8/3}$-LQG metric $\frk d_{h|_{U_1}}$ is well-defined, finite on compact subsets of $U_1$, and determines the same topology on $U_1$ as the Euclidean metric. We furthermore have $\frk d_{h|_{U_1}} \geq \frk d_h$ on $U_1$. 
Since $\frk d_h$ determines the same topology as the Euclidean metric, there exists $C >0$ such that  
\eqb  \label{eqn-good-metric-event}
\BB P \left[ \op{diam}   \left( A_1  ; \frk d_{h|_{U_1} } \right) \leq C \frk d_h \left( A_2, \bdy U_2 \right) \right] \geq \frac12   .
\eqe 
Let $R >0$ be a constant to be chosen later and let $\phi$ be a smooth function on $\ol D$ which is identically equal to $-R$ on $U_1$, identically equal to $R$ on $U_2$, and identically equal to $0$ outside a compact subset of $\ol D$. Let $\wh h := h + \phi$. By the scaling property of the $\sqrt{8/3}$-LQG metric~\cite[Lemma~2.2]{lqg-tbm2},
\eqbn
\op{diam}  \left( A_1  ; \frk d_{\wh h|_{U_1} } \right) = e^{- \frac{\sqrt{8/3}}{4} R  }  \op{diam}  \left( A_1  ; \frk d_{ h|_{U_1} } \right) \enskip \op{and}  \enskip \frk d_{\wh h} \left( A_2, \bdy U_2 \right) = e^{\frac{\sqrt{8/3}}{4} R } \frk d_{  h} \left( A_2, \bdy U_2 \right)  .
\eqen
Hence if we choose $R >0$ such that $ C e^{ -   \sqrt{8/3} R /4}   \leq c $, then~\eqref{eqn-good-metric-event} implies that with probability at least $1/2$,  
\eqbn
  \op{diam}   \left( A_1  ; \frk d_{ \wh h|_{U_1} } \right) \leq c  \frk d_{\wh h} \left( A_2, \bdy U_2 \right) ,
\eqen
in which case
\eqb \label{eqn-diam-dist-compare}
\op{diam}   \left( A_1 ; \frk d_{\wh h} \right) \leq c    \frk d_{\wh h}(A_1,A_2 ). 
\eqe 
On the other hand, since $\wh h - h$ is a smooth compactly supported function, the law of $\wh h$ is absolutely continuous with respect to the law of $h$, so with positive probability~\eqref{eqn-diam-dist-compare} holds with $h$ in place of $\wh h$. By scaling, this implies that~\eqref{eqn-diam-dist-compare} holds.
\end{proof}

Now we can rule out the possibility that geodesics for the $\sqrt{8/3}$-LQG metric induced by a GFF with a log singularity at $0$ hit $0$.

\begin{lem} \label{prop-free-gff-hit0}
Let $\wt h$ be a free boundary GFF on $\BB H$ and let $\alpha\in\BB R$. Let $h := \wt h -\alpha\log |\cdot|$ and let $\frk d_h$ be the $\sqrt{8/3}$-LQG metric induced by $h$. Almost surely, no $\frk d_h$-geodesic between points in $\BB H$ hits $0$.
\end{lem}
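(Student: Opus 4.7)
For $\rho>0$ set $S_\rho := \{z\in\ol{\BB H} : |z|=\rho\}$ (a semicircle) and define the shortcut event $E_\rho := \{\op{diam}(S_\rho;\frk d_h) < 2\,\frk d_h(\{0\},S_\rho)\}$. The plan is to prove that a.s.\ $E_\rho$ holds for arbitrarily small $\rho>0$; this suffices for the lemma. Indeed, if $\gamma$ is a $\frk d_h$-geodesic between two points $p_1,p_2\in\BB H$ hitting $0$ and $\rho<\min(|p_1|,|p_2|)$ is a scale on which $E_\rho$ holds, then by continuity of $\gamma$ in the Euclidean topology (which agrees with the $\frk d_h$-topology on $\ol{\BB H}\setminus\{\infty\}$, cf.\ Lemma~\ref{prop-metric-bdy}) the curve enters $B_\rho(0)\cap\ol{\BB H}$ at some $q_1\in S_\rho$ and exits at some $q_2\in S_\rho$; the subarc from $q_1$ to $q_2$ has $\frk d_h$-length at least $\frk d_h(q_1,0)+\frk d_h(0,q_2)\geq 2\,\frk d_h(\{0\},S_\rho)$, which on $E_\rho$ strictly exceeds $\op{diam}(S_\rho;\frk d_h)\geq \frk d_h(q_1,q_2)$, contradicting minimality of $\gamma$.

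I would first use scale invariance to show $\BB P[E_\rho]=\BB P[E_1]$ for every $\rho>0$. Combining Lemma~\ref{prop-metric-coord} applied to $\phi_r(z)=rz$ with the scale invariance of the free-boundary GFF modulo additive constant, a direct computation using $h=\wt h-\alpha\log|\cdot|$ yields $h'(z):=h(rz)+Q\log r \overset{d}{=} h+C_r$ for a random constant $C_r$; since $E_\rho$ is a ratio of $\frk d_h$-distances (hence unchanged by shifting $h$ by a constant) and $E_\rho$ under $h$ corresponds to $E_1$ under $h'$, this gives $\BB P[E_\rho]=\BB P[E_1]$. To see $\BB P[E_1]>0$ I would adapt the proof of Lemma~\ref{lem-diam-dist-pos}: take $A_1 = S_1\cap\ol{\BB H}$, $A_2=\{0\}$, and open neighborhoods $U_1,U_2\subset\ol{\BB H}$ of $A_1,A_2$ with $\ol{U_1}\cap\ol{U_2}=\emptyset$ and $0\notin\ol{U_1}$. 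Since $-\alpha\log|\cdot|$ is smooth on $U_1$, $h|_{U_1}$ has the structural form required in that lemma and $\frk d_{h|_{U_1}}$ induces the Euclidean topology; for $\alpha\leq Q$, Lemma~\ref{prop-metric-bdy} gives $\frk d_h(\{0\},\bdy U_2)>0$, while for $\alpha>Q$ either $0$ lies at infinite $\frk d_h$-distance (so no geodesic can hit $0$ and the lemma is trivial) or a monotone coupling reduces to the $\alpha\leq Q$ case. Perturbing $h$ by a smooth compactly supported bump equal to $-R$ on $U_1$ and $R$ on $U_2$ for $R$ large enough makes the ratio $\op{diam}(A_1;\frk d_{\wh h|_{U_1}})/\frk d_{\wh h}(A_2,\bdy U_2)$ arbitrarily small on an event of probability at least $1/2$, and Girsanov transfers this back to $h$.

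To promote ``positive probability at each scale'' to ``a.s.\ at arbitrarily small scales,'' I would invoke a zero-one law. By the locality of the $\sqrt{8/3}$-LQG metric noted at the end of Section~\ref{sec-lqg-metric-sphere}, the event $E_\rho$ is determined modulo additive constant by $h|_{B_{2\rho}(0)\cap\BB H}$. Hence for any fixed $r\in(0,1/10)$, the event $F := \bigcap_{N}\bigcup_{k\geq N}E_{r^k}$ is measurable with respect to the tail $\sigma$-algebra $\mcl T := \bigcap_{\epsilon>0}\sigma(h|_{B_\epsilon(0)\cap\BB H})$ modulo additive constant. Using the radial/lateral decomposition of the free-boundary GFF about $0$ (analogous to the construction of quantum wedges in~\cite[Section~4]{wedges}), the radial coordinate in the log-scale is a two-sided Brownian motion with drift and the lateral fluctuations are independent across dyadic scales, so $\mcl T$ is trivial by Kolmogorov's zero-one law. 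Fatou's lemma and the scale invariance of the previous paragraph give $\BB P[F]\geq\limsup_k\BB P[E_{r^k}]=\BB P[E_1]>0$, hence $\BB P[F]=1$, completing the proof. The main obstacle is justifying the triviality of $\mcl T$ carefully: one has to verify that the $\frk d_h$-distance quantities entering $E_\rho$ are genuinely measurable with respect to the restriction of $h$ to a shrinking neighborhood of $0$, and that the radial/lateral decomposition of the free-boundary GFF indeed has the Markovian/independent-increment structure claimed at all sufficiently small scales.
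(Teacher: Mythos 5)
Your reduction to a sequence of ``shortcut'' events at geometrically shrinking scales, and the proof that each such event has scale-independent positive probability via Lemma~\ref{lem-diam-dist-pos}, conformal/scale invariance of the field modulo additive constant, and Lemma~\ref{prop-metric-coord}, is essentially the paper's argument. The gap is in the final upgrade from positive probability to almost-sure occurrence at infinitely many scales. You appeal to a Kolmogorov zero-one law for the germ $\sigma$-algebra $\mcl T=\bigcap_{\ep>0}\sigma(h|_{B_\ep(0)\cap\BB H})$, asserting that ``the lateral fluctuations are independent across dyadic scales.'' They are not: the free-boundary GFF does not decompose into independent pieces supported on disjoint annuli, so Kolmogorov's theorem does not apply, and the triviality of $\mcl T$ (modulo additive constant) is precisely the nontrivial decorrelation statement that must be proved. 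The paper proves exactly this in hands-on form: it chooses a sequence of stopping times $j_k$ for the filtration generated by $h|_{\BB H\setminus B_{r_j}(0)}$ along which the conditional mean has small Dirichlet energy on the inner ball, invokes~\cite[Lemma~5.4]{lqg-tbm2} to bound the Radon--Nikodym derivative of the conditional law of the field near $0$ between $1/2$ and $2$, and concludes via the conditional Borel--Cantelli lemma that $E_{j_k}$ occurs for some $k$ a.s. Flagging this as ``the main obstacle'' does not fill it; without that decorrelation input the proof is incomplete.

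A secondary issue is the choice of event. The paper's event compares distances between three concentric semicircles inside an annulus bounded away from $0$, which makes it (i) locally determined by the field on that annulus (this is what lets it sit correctly in the filtration used for the Borel--Cantelli step) and (ii) insensitive to whether $0$ is at finite $\frk d_h$-distance. Your event $E_\rho$ involves $\frk d_h(\{0\},S_\rho)$, which depends on the field all the way down to $0$, and $\op{diam}(S_\rho;\frk d_h)$, which is \emph{not} determined by $h|_{B_{2\rho}(0)\cap\BB H}$ since the diameter may be realized by paths exiting $B_{2\rho}(0)$ (the locality statement only controls $\frk d_h(z,w)\wedge\frk d_h(z,\bdy U)$). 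So the claimed measurability of $E_\rho$, and hence the claim that your limsup event lies in $\mcl T$ at all, also needs repair; and the case $\alpha\geq Q$, where $0$ may be at infinite distance, is handled only by an unargued ``monotone coupling.'' Replacing $E_\rho$ with an annulus event as in the paper removes all of these difficulties at once.
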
  
\begin{proof}
For each $j \in \BB N$, let $r_j = e^{-j}$ and let $\mcl F_j$ be the $\sigma$-algebra generated by $h|_{\BB H\setminus B_{r_j}(0)}$. Let $\frk h_j$ be the conditional mean of $h|_{B_{r_j}(0)\cap \BB H}$ given $h|_{\BB H\setminus B_{r_j}(0)}$.
Let $E_j$ be the event that
\eqbn
\op{diam} \left( \bdy B_{r_{j-1}}(0) \cap \BB H ; \frk d_h  \right) <  \frk d_h \left( \bdy B_{ r_{j-1}}(0) \cap \BB H , \left( \bdy B_{r_j}(0) \cup \bdy B_{r_{j-2}}(0) \right) \cap \BB H\right).
\eqen
Then if $E_j$ occurs for some $j \in \BB N$, no geodesic between points in $\BB H\setminus B_{r_j}(0)$ can hit $0$. Hence we just need to show that for each $j_0 \in \BB N$,  
\eqbn
\BB P \left[ \bigcup_{j=j_0}^\infty E_j \right] = 1. 
\eqen

The event $E_j$ is the same as the event that the following is true: if we grow the $\frk d_h$-metric balls centered at any point of $\bdy B_{r_{j-1}}(0)$, then we cover $\bdy B_{r_{j-1}}(0)$ before reaching $\bdy B_{ r_j}(0)$ or $\bdy B_{r_{j-2}}(0)$. Since metric balls are locally determined by $h$ (this is immediate from the construction of $\frk d_h$ using QLE$(8/3,0)$ in~\cite{lqg-tbm1,lqg-tbm2,lqg-tbm3}), it follows that $E_j$ is determined by the restriction of $h$ to $B_{r_{j-2}}(0)\setminus B_{r_j}(0)$, and in particular $E_j \in \mcl F_j$. By the scaling property of $\frk d_h$~\cite[Lemma 2.2]{lqg-tbm2}, the events $E_j$ do not depend on the choice of additive constant for $h$. By the conformal invariance of the law of $h$, modulo additive constant, and Lemma~\ref{lem-diam-dist-pos}, we can find $p > 0$ such that $\BB P[E_j] \geq p$ for each $j \in \BB N$. 
  
Now fix $j_0  \in \BB N$ and $\ep >0$. Inductively, suppose $k\in\BB N$ and we have defined a $j_{k-1} \geq j_0$ which is a stopping time for the filtration $\{\mcl F_j\}_{j \in \BB N}$. The function $\frk h_{j_{k-1}}$ is a.s.\ harmonic, hence smooth, on $ \ol{B_{r_{j_{k-1}+1}}(0) \cap \BB H } $ so we can find $j_k \geq j_{k-1} + 2 $ such that 
\eqbn
\int_{B_{r_{j_k-2} }(0) \cap \BB H} | \nabla \frk h_{j_{k-1}}( w) |^2 \, dw   \leq \ep .
\eqen
Since $\frk h_{j_{k-1}}$ is $\mcl F_{j_{k-1}}$-measurable, it follows by induction that $j_k$ is a stopping time for $\{\mcl F_j\}_{j \in \BB N}$. By~\cite[Lemma~5.4]{lqg-tbm2}, if we choose $\ep >0$ sufficiently small, depending on $p$, then it holds with conditional probability at least $1-p/2$ given $\mcl F_{j_{k-1}}$ that the Radon-Nikodym derivative of the conditional law of $h|_{B_{r_{j_k-2}}}(0) \cap \BB H$ with respect to the unconditional law of a free-boundary GFF on $\BB H$ plus $-\alpha\log|\cdot|$ restricted to $B_{r_{j_k-2}}(0) \cap \BB H$, viewed as distributions defined modulo additive constant, is between $1/2$ and $2$. Hence
\eqbn
\BB P \left[ E_{j_k} \,|\, \mcl F_{j_{k-1}} \right] \geq \frac{p}{4} .
\eqen
By the Borel-Cantelli lemma, it follows that a.s.\ $E_{j_k}$ occurs for some $k\in\BB N$. 
\end{proof}

\begin{proof}[Proof of Lemma~\ref{prop-geodesic-bdy}]
For each sufficiently small $r>0$, the law of $h |_{B_r(0)\cap \BB H}$ for an appropriate choice of embedding $h$ is absolutely continuous with respect to the law of the corresponding restriction of a free-boundary GFF plus a $\log$ singularity at $0$ with appropriate choice of additive constant. By Lemma~\ref{prop-free-gff-hit0}, a.s.\ no geodesic of $\frk d_h$ hits $0$. In the case of a free-boundary GFF or a thick quantum wedge, it is clear that a.s.\ no geodesic of $\frk d_h$ hits $\infty$. In the case of a single bead of a thin wedge, this follows since $(\BB H , h , 0,\infty) \eqD (\BB H , h, \infty , 0)$. It follows that we can find $\rho = \rho(p) > 1$ such that with probability at least $1-p$, every geodesic from $z_1$ to $z_2$ is contained in $\BB V_\rho'$. Let $E$ be the event that this is the case.

By Lemma~\ref{prop-internal-equal}, on $E$ the set of $\frk d_h$-geodesics from $z_1$ to $z_2$ coincides with the set of $\frk d_{h|_{\BB V_\rho'}}$-geodesics from $z_1$ to $z_2$. 
The law of the field $h|_{\BB V_{\rho}'}$ is absolutely continuous with respect to the law of the corresponding restriction of an appropriate embedding into $(\BB H , 0 ,\infty)$ of a quantum disk, which by~\cite[Corollary~1.5]{lqg-tbm2} is isometric to a Brownian disk.  
By~\cite[Lemma~18]{bet-mier-disk}, we infer that a.s.\ there is only one $\frk d_{h|_{\BB V_\rho'}}$-geodesic from $z_1$ to $z_2$ and that this geodesic a.s.\ does not hit $\bdy \BB D$. Since $\BB P[E]$ can be made arbitrarily close to 1 by choosing $\rho$ sufficiently large, we obtain the statement of the lemma.  
\end{proof}

In the setting of Theorem~\ref{thm-cone-gluing}, there is no boundary to worry about but we still need to ensure that geodesics stay away from the origin, since, due to the presence of a log singularity, the restriction of the field~$h$ to a neighborhood of the origin is not absolutely continuous with respect to a quantum disk (or sphere).  The following lemma addresses this issue. We do not give a proof since the argument is essentially the same as the proof of Lemma~\ref{prop-geodesic-bdy}.

\begin{lem} \label{prop-geodesic-plane}
Suppose that $h$ is either a whole-plane GFF plus $-\alpha\log|\cdot|$ for $\alpha < Q$, an embedding into $(\BB  C , 0, \infty)$ of an $\alpha$-quantum cone for $\alpha <Q$, or an embedding into $(\BB C , 0 ,\infty)$ of a quantum sphere.
Let $R>1$ and let $z_1,z_2$ be sampled uniformly from $\mu_h|_{B_R(0) \setminus B_{1/R}(0) }$, normalized to be a probability measure. Almost surely, there is a unique $\frk d_h$-geodesic $\gamma_{z_1,z_2}$ from $z_1$ to $z_2$, and a.s.\ $\gamma_{z_1, z_2}$ does not hit $0$ or $\infty$. 
\end{lem}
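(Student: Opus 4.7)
The plan is to follow the two-stage strategy used to prove Lemma~\ref{prop-geodesic-bdy}. First, I would rule out the possibility that any $\frk d_h$-geodesic between quantum-typical points passes through the marked points at $0$ or $\infty$. Then, on the event that every such geodesic lies in a compact annulus bounded away from $\{0,\infty\}$, I would use Lemma~\ref{prop-internal-equal} together with local absolute continuity with respect to an embedding of a quantum disk (isometric to a Brownian disk by~\cite[Corollary~1.5]{lqg-tbm2}) to apply~\cite[Lemma~18]{bet-mier-disk}, which yields both the uniqueness of the geodesic and that it avoids the boundary of the annulus.

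The main new step is the whole-plane analog of Lemma~\ref{prop-free-gff-hit0}: if $\wt h$ is a whole-plane GFF modulo additive constant and $h = \wt h - \alpha \log|\cdot|$ for $\alpha < Q$, then a.s.\ no $\frk d_h$-geodesic between two points of $\BB C\setminus\{0\}$ passes through $0$. I would run the identical iterative scale argument. With $r_j = e^{-j}$ and $\mcl F_j := \sigma(h|_{\BB C\setminus B_{r_j}(0)})$, let $E_j$ be the event that
\[
\op{diam}\!\left(\bdy B_{r_{j-1}}(0);\frk d_h\right) < \frk d_h\!\left(\bdy B_{r_{j-1}}(0),\, \bdy B_{r_j}(0)\cup \bdy B_{r_{j-2}}(0)\right).
\]
The detour argument from the proof of Lemma~\ref{prop-free-gff-hit0} shows that on $E_j$ no $\frk d_h$-geodesic between two points of $\BB C\setminus B_{r_j}(0)$ can visit $0$. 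Locality of the QLE$(8/3,0)$-metric balls gives $E_j\in\mcl F_j$; scale invariance of $\wt h$ modulo additive constant (together with the fact that $E_j$ is unaffected by additive constants) shows that $\BB P[E_j]$ does not depend on $j$; and Lemma~\ref{lem-diam-dist-pos}, applied on a simply connected neighborhood of the annulus $B_{r_{j-2}}(0)\setminus B_{r_j}(0)$ (on which the law of $h$ is absolutely continuous with respect to a free-boundary GFF plus a smooth function), gives a uniform lower bound $\BB P[E_j]\geq p>0$. Combined with the conditional-mean perturbation argument from~\cite[Lemma~5.4]{lqg-tbm2}, carried out along a subsequence of stopping times $j_k$ for which $\frk h_{j_{k-1}}$ has small Dirichlet energy on $B_{r_{j_k-2}}(0)$, and Borel-Cantelli, this yields that a.s.\ some $E_{j_k}$ occurs.

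Ruling out geodesics hitting $\infty$ is handled by applying the above to the coordinate change $z\mapsto 1/z$ via Lemma~\ref{prop-metric-coord}: the transformed field is again a whole-plane GFF plus a log singularity at $0$. The conclusion transfers to the cases of an $\alpha$-quantum cone or a quantum sphere by local absolute continuity of the restriction of $h$ to a small neighborhood of $0$ (resp.\ of $\infty$) with respect to the restriction of a whole-plane GFF with the appropriate log singularity. Consequently, for any $p>0$ we may choose $\rho=\rho(p)>1$ such that with probability at least $1-p$ every $\frk d_h$-geodesic from $z_1$ to $z_2$ lies in $B_\rho(0)\setminus B_{1/\rho}(0)$. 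On this event Lemma~\ref{prop-internal-equal} identifies the $\frk d_h$-geodesics with the $\frk d_{h|_U}$-geodesics for $U$ a slightly larger open annulus, and the Brownian-disk uniqueness statement from~\cite[Lemma~18]{bet-mier-disk} gives the two claims of the lemma. The main point to verify is that Lemma~\ref{lem-diam-dist-pos} transfers to the whole-plane setting; this is immediate from its proof, which only uses local absolute continuity under deterministic smooth compactly-supported perturbations.
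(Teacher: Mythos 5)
Your proposal is correct and follows exactly the route the paper intends: the paper omits the proof of this lemma, stating that the argument is essentially the same as that of Lemma~\ref{prop-geodesic-bdy}, and your write-up is precisely that adaptation (the whole-plane analog of Lemma~\ref{prop-free-gff-hit0} via the events $E_j$, the inversion $z\mapsto 1/z$ to handle $\infty$, local absolute continuity for the cone and sphere cases, and then Lemma~\ref{prop-internal-equal} plus the Brownian-disk geodesic uniqueness). Your observation that Lemma~\ref{lem-diam-dist-pos} transfers to the whole-plane GFF because its proof only uses locality, scaling, and absolute continuity under smooth compactly supported perturbations is exactly the right point to check.
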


\subsection{Metric gluing of two quantum wedges}
\label{sec-wedge-gluing}

In this subsection we will prove Theorem~\ref{thm-wedge-gluing}. 
The proof of Theorem~\ref{thm-cone-gluing} is similar to that of Theorem~\ref{thm-wedge-gluing}, but slightly simpler because we only need to consider a single complementary connected component of $\eta$ (note that the proof uses Lemma~\ref{prop-geodesic-plane} in place of Lemma~\ref{prop-geodesic-bdy}). So, we will only give a detailed proof of Theorem~\ref{thm-wedge-gluing}.

Throughout, we assume we are in the setting of Theorem~\ref{thm-wedge-gluing}.  Recall in particular that $(\BB H , h , 0, \infty)$ is a weight-$\frk w$ quantum wedge (so $\mu_h(\BB H) =\infty$) if $\frk w \geq 4/3$, or a single bead of a thin wedge with quantum area $\frk a$ and left/right quantum boundary lengths $\frk l^-$ and $\frk l^+$ (if $\frk w < 4/3$).  We assume that the embedding of $h$ is chosen so that the quantum mass of $\BB D\cap \BB H$ is $1$ (if $\frk w \geq 4/3$) or so that the quantum mass of $\BB D\cap \BB H$ is $\frk a/2$ (if $\frk w < 4/3$).  This is merely for convenience as the statement of the theorem is independent of the choice of embedding. We also assume that the SLE$_{8/3}(\frk w^--2;\frk w^+-2)$ curve $\eta$ is parameterized by quantum length with respect to $h$. 
 
For a connected component $U\in \mcl U^-\cup\mcl U^+$, let $x_U$ (resp.\ $y_U$) be the first (resp.\ last) point of $\bdy U$ hit by $\eta$. Then the quantum surfaces $(U , h|_U  , x_U , y_U)$ for $U\in\mcl U^\pm$ are the beads of $\mcl W^\pm$ if $\frk w^\pm < 4/3$, or all of $\mcl W^\pm$ if $\frk w^\pm \geq 4/3$. 

We now give an outline of the proof of Theorem~\ref{thm-wedge-gluing}. Recall from Section~\ref{sec-metric-basic} that distances with respect to the quotient metric are given by the infimum of the lengths of paths which cross $\eta$ only finitely many times, so to prove that $\frk d_h$ coincides with the quotient metric we need to show that $\frk d_h$-distances are well-approximated by the lengths of paths which cross $\eta$ only finitely many times.

For most of the proof, we will truncate on a global regularity event $G_C = G_C(u,\rho)$, which we define in Lemma~\ref{lem-metric-reg} just below. After establishing that $G_C$ holds with probability close to $1$ when the parameters are chosen appropriately (which is accomplished using the estimates of Section~\ref{sec-disk-estimate}), our main aim will be to prove the following. There is an $\alpha>0$ such that if $G_C$ occurs, then for small $\delta>0$ the $\frk d_h$-geodesic $\gamma_{z_1,z_2}$ between two points $z_1,z_2 \in B_\rho(0)$ sampled from $\mu_h|_{B_\rho(0)}$ hits at most $ \delta^{-1+\alpha + o_\delta(1)} $ SLE segments of the form $\eta([(k-1)\delta^2 , k\delta^2])$ for $k\in\BB N$.  In light of Lemma~\ref{prop-disk-bdy-holder}, this will allow us to construct a path from $z_1$ to $z_2$ of length approximately $\frk d_h(z_1,z_2)$ which consists of a concatenation of finitely many segments of $\gamma_{z_1,z_2}$ which are each contained in some $U\in \mcl U^-\cup \mcl U^+$; and $ \delta^{-1+\alpha + o_\delta(1)} $ paths of length at most $\delta^{1+o_\delta(1)}$ which do not cross $\eta$, each of which connects two points of $\eta([(k-1)\delta^2 , k\delta^2])$ for some $k\in\BB N$. Such a path crosses $\eta$ only finitely many times and has length at most $\op{len}(\gamma_{z_1,z_2} ; \frk d_h) + \delta^{\alpha +o_\delta(1)}$. By the definition of the quotient metric (Section~\ref{sec-metric-basic}), the existence of this path shows that $\frk d_h(z_1,z_2)$ differs from the quotient metric distance between $z_1$ and $z_2$ by at most $\delta^{\alpha +o_\delta(1)}$. Sending $\delta \rta 0$ then concludes the proof. 

The proof of our estimate for the number of $\delta^2$-length SLE segments hit by a $\frk d_h$-geodesic will be accomplished in two main steps. First we will show in Lemma~\ref{prop-sle-ball-count} that on $G_C$, a $\frk d_h$-metric ball of radius approximately $\delta$ cannot intersect too many $\delta^2$-length SLE segments. Then we will obtain the desired estimate in Lemma~\ref{prop-sle-intersect-mean} by bounding the probability that $\eta$ gets within $\frk d_h$-distance~$\delta$ of a given segment of the geodesic~$\gamma_{z_1,z_2}$. 

We are now ready to define our regularity event $G_C$ and show that it occurs with high probability. For the statement of the following lemma, we recall the sets $\BB V_\rho$ and $\BB V_\rho'$ from~\eqref{eqn-box-def}. 

\begin{lem} \label{lem-metric-reg}
There exists $\beta >0$ such that the following is true. 
For $u\in (0,1)$, $\rho > 2$, and $C> 1$, let $G_C = G_C(u,\rho)$ be the event that the following hold.
\begin{enumerate}
\item \label{item-area-upper} For each $z\in \BB V_\rho$ and each $\delta\in (0,1]$ with $B_\delta \left(z ; \frk d_h\right) \cap\BB R = \emptyset$, we have $\mu_h \left( B_\delta \left(z ; \frk d_h \right) \right) \leq C \delta^{4-u}$.
\item \label{item-area-lower} For each $U\in \mcl U^- \cup \mcl U^+$ with $ U \cap \BB V_\rho\not=\emptyset$, each $z \in \ol U \cap \BB V_\rho$, and each $\delta \in (0,1]$, we have $\mu_h \left( B_\delta \left(z ;  \frk d_{h|_U} \right)\right) \geq C^{-1} \delta^{4+u}$.  
\item \label{item-length-upper} For each $U\in \mcl U^- \cup \mcl U^+$ with $ U \cap \BB V_\rho\not=\emptyset$ and each $x,y\in \bdy U \cap \BB V_\rho$,  
\begin{equation}
\label{eqn-length-upper}
\frk d_{h|_U}(x,y) \leq C     \nu_h \left([x,y]_{\bdy U} \right)^{1/2}  \left(  \left| \log \nu_{a,\ell} \left( [x,y]_{\bdy \op{BD}_{a,\ell}} \right) \right| + 1 \right)^2. 
\end{equation}
\item \label{item-length-lower} For each $U\in \mcl U^- \cup \mcl U^+$ with $U \cap \BB V_\rho\not=\emptyset$, each $x,y\in \bdy U \cap \BB V_\rho$, and each $\delta \in (0,1)$,  
\eqbn
\mu_h  \left( B_\delta \left( [x,y]_{\bdy U} ; \frk d_{h|_{U}} \right) \right) \geq C^{-1} \delta^{2 + u}  \nu_h \left(  [x,y]_{\bdy U} \right).
\eqen
\item \label{item-euclidean-holder} For each $z\in \BB V_\rho'$ and each $\delta\in (0,1]$, we have $ B_\delta(z ; \frk d_h)   \subset B_{ C \delta^\beta}(z)$. 
\item \label{item-sle-cont} For each $t > s > 0$ such that $\eta(s) \in \BB V_{\rho/2}$ and $|t-s| \leq C^{-1}$, we have $\eta(t) \in \BB V_\rho$. 
\end{enumerate}
For each $u > 0$, $\rho > 2$, and $p\in (0,1)$ there exists $C> \rho$ such that $\BB P[G_C] \geq 1-p$.
\end{lem}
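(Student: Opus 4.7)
My plan is to verify each of the six items by exhibiting, for each $i$, an a.s.\ finite random quantity $K_i$ such that item $i$ holds whenever $K_i \leq C$, then to choose $C$ so that each $K_i \leq C$ fails with probability at most $p/6$, and to conclude by a union bound. Items~\ref{item-euclidean-holder} and~\ref{item-sle-cont} can be handled directly. The Euclidean--LQG H\"older bound of item~\ref{item-euclidean-holder} follows from the H\"older continuity of the $\sqrt{8/3}$-LQG metric with respect to the Euclidean metric proved in~\cite{lqg-tbm2}: on a deterministic neighborhood $N'$ containing $\overline{\BB V_\rho'}$ and sitting at positive distance from the marked points, $h|_{N'}$ is locally absolutely continuous with a quantum sphere embedding, and the H\"older exponent $\beta$ is inherited. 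Item~\ref{item-sle-cont} reduces to the a.s.\ uniform continuity of $\eta$ on the a.s.\ bounded set $\eta^{-1}(\overline{\BB V_{\rho/2}})$, boundedness coming from the transience of $\eta$.

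The remaining four items all involve the components $U \in \mcl U^-\cup\mcl U^+$, and the key preliminary observation is that only finitely many such $U$ can meet the compact set $\overline{\BB V_\rho}$, which sits at positive distance from $\BB R$. Indeed, an infinite sequence of distinct such components would yield an accumulation point in $\overline{\BB V_\rho}$ that $\eta$ must approach along infinitely many disjoint excursions, contradicting its a.s.\ continuity together with transience. For each such $U$, the quantum surface $(U, h|_U, x_U, y_U)$ is (a bead of) a weight-$\frk w^\pm$ quantum wedge, whose restriction to any open subset at positive Euclidean distance from the marked points is locally absolutely continuous with an embedding of a quantum disk and therefore, by~\cite[Corollary~1.5]{lqg-tbm2}, isometric to a Brownian disk. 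Items~\ref{item-area-lower},~\ref{item-length-upper}, and~\ref{item-length-lower} then follow from Lemmas~\ref{prop-ball-size},~\ref{prop-disk-bdy-holder} (taking $\zeta = 1/4$ therein), and~\ref{lem-interval-nghbd} respectively, applied to each of the finitely many relevant components and with $C$ chosen to dominate the a.s.\ finite constants these lemmas provide.

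Item~\ref{item-area-upper} is slightly more delicate because the ball is with respect to the full metric $\frk d_h$ rather than an internal metric. I fix a deterministic neighborhood $N = \BB V_{\rho+1}$; then $h|_N$ is locally absolutely continuous with a quantum sphere embedding, so the Brownian map ball-area upper bound of~\cite[Corollary~6.2]{legall-geodesics}, which underlies the upper bound in Lemma~\ref{prop-ball-size}, transfers to give an a.s.\ constant $C_1$ with $\mu_h(B_\delta(z;\frk d_{h|_N})) \leq C_1 \delta^{4-u}$ for all $z \in \BB V_\rho$ and $\delta \in (0,1]$. The Euclidean H\"older estimate from item~\ref{item-euclidean-holder} will force $B_{2\delta}(z;\frk d_h) \subseteq N$ whenever $B_\delta(z;\frk d_h)\cap\BB R = \emptyset$ and $\delta$ is sufficiently small, and then a short geodesic-containment argument combined with Lemma~\ref{prop-internal-equal} yields $B_\delta(z;\frk d_h) = B_\delta(z;\frk d_{h|_N})$, completing item~\ref{item-area-upper}. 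The main subtlety in the whole argument is the reduction to finitely many components $U$, which is what allows the a.s.\ finite constants provided by the estimates of Section~\ref{sec-disk-estimate} to be combined into a single finite $C$; the rest is bookkeeping combining local absolute continuity with those estimates.
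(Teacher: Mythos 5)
Your proposal is correct and follows essentially the same route as the paper: items (5) and (6) are handled via the Euclidean H\"older continuity of the metric and the continuity/transience of $\eta$, and items (1)--(4) by reducing to finitely many components, invoking local absolute continuity with a quantum disk embedding (hence a Brownian disk, away from marked points), and applying Lemmas~\ref{prop-ball-size},~\ref{prop-disk-bdy-holder}, and~\ref{lem-interval-nghbd}. The only organizational difference is that the paper carries out the finiteness reduction via an $\mcl F$-measurable truncation $\mcl U_\ep^\pm$ of the bead sequence by quantum area (so that the conditional law of the retained surfaces given $\mcl F$ is explicit) together with a high-probability containment of the relevant random sets in deterministic domains, whereas you take a maximum of a.s.\ finite constants over the a.s.\ finite random collection of components meeting $\ol{\BB V_\rho}$; both reductions rest on the same continuity-and-transience fact about $\eta$, and yours is also valid since an a.s.\ statement for each member of a countable family survives random selection of a finite subfamily.
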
  
We remark that in~\eqref{eqn-length-upper} it is possible to improve the exponent of the $\log$ term to $7/4+\zeta$ for any $\zeta > 0$ (see Lemma~\ref{prop-disk-bdy-holder}), but the particular exponent does not matter for our purposes so for simplicity we just take it to be 2.
\begin{proof}[Proof of Lemma~\ref{lem-metric-reg}]
Fix $u \in (0,1)$, $\rho > 2$, and $p\in (0,1)$. 
We will deduce the statement of the lemma from the results of Section~\ref{sec-disk-estimate} and local absolute continuity (recall that the Brownian disk is equivalent to the quantum disk). 
First we need to reduce ourselves to considering only finitely many quantum surfaces, rather than the surfaces parameterized by all of the complementary connected components of $\eta$ (which is an infinite set of one of $\frk w^\pm$ is smaller than $4/3$).

Let $\mcl F$ be the $\sigma$-algebra generated by the $\mu_h$-areas of the ordered sequences of beads of $\mcl W^-$ and $\mcl W^+$ (so that $\mcl F$ is trivial if $\frk w^- \wedge \frk w^+ \geq 4/3$). For $\ep > 0$, let $\mcl U_\ep^\pm$ be the set of $U\in \mcl U^\pm$ such that the following is true: we have $\mu_h(U) \geq \ep$, and the sum of the quantum areas of the beads of $\mcl W^\pm$ which come before~$U$ is at most~$\ep^{-1}$. Then each $\mcl U_\ep^\pm$ is a finite set.  Furthermore, $\mcl F$ determines which elements of $\mcl U^\pm$ belong to $\mcl U_\ep^\pm$ so the conditional law of the surfaces $(U , h|_U , x_U , y_U)$ for $U\in \mcl U_\ep^\pm$ given $\mcl F$ is that of a collection of independent beads of a weight-$\frk w^\pm$ quantum wedge with given areas (or a single weight-$\frk w^\pm$ quantum wedge if $\frk w^\pm \geq 4/3$). 

The boundary of each $U\in \mcl U^\pm$ intersects $\BB R$ (since $\eta$ is simple) and there are only finitely many $U\in\mcl U^\pm$ which intersect $B_\rho(0)$ and have diameter larger than $\rho^{-1}$ (since $\eta$ is continuous and transient \cite{ig1}). Consequently, we can find $\ep  = \ep(\rho) > 0$ such that with probability at least $1-p/5$, it holds that each $U\in \mcl U^- \cup \mcl U^+$ which intersects $\BB V_\rho$ belongs to $\mcl U_\ep^-\cup \mcl U_\ep^+$. Henceforth fix such an $\ep$. 
 
For $U\in \mcl U^\pm$, let $\phi_U : U \rta \BB H$ be the unique conformal map which takes $x_U$ to $0$ and $y_U$ to $\infty$, normalized so that the quantum measure induced by the field $h_U := h\circ \phi_U^{-1} + Q\log |(\phi_U^{-1})'|$ assigns mass $1$ to $\BB D\cap\BB H$ (if $\mu_h(U) = \infty$) or mass $\mu_h(U)/2$ to $\BB D\cap \BB H$ (if $\mu_h(U) < \infty$).  

Since each of the marked points $x_U , y_U$ for $U\in\mcl U^\pm$ lies in $\BB R \cup \{\infty\}$, we infer that each $\phi_U$ for $U\in \mcl U^\pm$ a.s.\ maps $U\cap \BB V_{2\rho}$ to a bounded subset of $\BB H$ lying at positive distance from  $0$. 
Since each $\mcl U_\ep^\pm$ is a.s.\ finite, we can find $\wt\rho > 1$ such that with probability at least $1-p/5$,  
 \eqbn
 \phi_U(U \cap \BB V_{2\rho}' ) \subset \BB V_{\wt\rho}' ,\quad \forall U \in\mcl U_\ep^- \cup \mcl U_\ep^+ .
\eqen
Again using the finiteness of $\mcl U_\ep^\pm$, we can find $a > 1\wedge\frk a$ and $\ell > 0$ such that with probability at least $1-p/5$,  
\alb
& \mu_h \left( \BB V_{2\rho}' \right) \leq a/2 , \quad  
 \nu_h \left( \bdy \BB V_{2\rho}' \cap \BB R\right) \leq \ell/2   ,\\
&\mu_{h_U}  \left( \BB V_{2\wt\rho}' \right) \leq a/2,\quad \op{and}\quad 
\nu_{h_U} \left(  \bdy \BB V_{2\wt\rho}' \cap\BB R\right) \leq  \ell/2  ,
  \quad \forall U\in \mcl U_\ep^- \cup \mcl U_\ep^+ .
\ale

Let $(\BB H , \wt h , 0 , \infty)$ be a doubly marked quantum disk with area $a$ and left/right boundary lengths each equal to $\ell$, with the embedding chosen so that $\mu_{\wt h}(\BB D\cap\BB H) = 1$ (if $\frk w \geq 4/3$) or $\mu_{\wt h}(\BB D\cap\BB H) = \frk a/2$ (if $\frk w < 4/3$).  
It is easy to see from the definitions given in~\cite[Section~4]{wedges} that on the event $\{\mu_h \left( \BB V_{2\rho}' \right) \leq a/2 , \,\nu_h \left( \bdy \BB V_{2\rho}' \cap \BB R\right) \leq \ell/2  \}$ the law of $h|_{\BB V_\rho'}$ is absolutely continuous with respect to the law of the corresponding restriction of $\wt h$.  
Furthermore, if we condition on $\mcl F$ then on the event 
\[ \left\{\mu_{h_U}  \left( \BB V_{2\wt\rho}'   \right) \leq a/2 ,\, \nu_{h_U} \left(  \bdy \BB V_{2\wt\rho}' \cap\BB R\right) \leq  \ell/2  \right\},\]
the conditional law of $h_U|_{\BB V_{\wt\rho}'}$ is absolutely continuous with respect to the law of the corresponding restriction of~$\wt h$.  

By~\cite[Corollary~1.5]{sphere-constructions}, the space $\BB H$ equipped with the $\sqrt{8/3}$-LQG metric induced by~$\wt h$ is isometric to the Brownian disk with area $a$ and boundary length~$2\ell$. Since $\bdy \BB V_r$ a.s.\ lies at positive quantum distance from~$\bdy \BB V_{2r}$ for each $r> 1$ and similarly for $\BB V_r'$ and $\BB V_{2r}'$, it follows from Lemmas~\ref{prop-disk-bdy-holder},~\ref{prop-ball-size}, and~\ref{lem-interval-nghbd} that we can find $C> 1$ such that conditions~\ref{item-area-upper} through~\ref{item-length-lower} in the statement of the lemma hold with probability at least $1 - p/2$. 

By~\cite[Theorem~1.2]{lqg-tbm2} and local absolute continuity, for an appropriate choice of universal $\beta>0$ and a large enough choice of $C$, condition~\ref{item-euclidean-holder} holds with probability at least $1-p/4$. By the continuity and transience of $\eta$~\cite[Theorem 1.3]{ig1}, for large enough $C > 1$ (depending on $\rho$) condition~\ref{item-sle-cont} holds with probability at least $1-p/4$.  
\end{proof}

Our next lemma shows that $\eta$ satisfies a reverse H\"older continuity condition on the event $G_C$ (here we recall that $\eta$ is parameterized by quantum length with respect to $h$). 

\begin{lem} \label{prop-sle-diam-lower}
For each $v \in (0,1)$, there exists $u_0 = u_0(v) \in (0,1)$ such that for $u\in (0,u_0]$ the following is true. Let $\rho>2$ and $C>1$ and let $G_C = G_C(u , \rho)$ be the event of Lemma~\ref{lem-metric-reg}. There exists $\ep_0 = \ep_0(C,u,v)$ such that the following is true a.s.\ on $G_C$. If $0 < a < b < \infty$ such that $b-a \leq \ep_0$ and $\eta([a,b]) \cap \BB V_{\rho/2} \not=\emptyset$, then
\eqbn
\op{diam}  \left( \eta([a,b]) ; \frk d_h \right) \geq 7 (b-a)^{ (1+v)/2} .
\eqen
\end{lem}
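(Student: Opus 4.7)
The plan is to argue by contradiction. Fix $v \in (0,1)$ and pick $u_0 = u_0(v) \in (0,1)$ with $u_0(1+v) < v$ (for example $u_0 := v/(2(1+v))$). For $u \leq u_0$, suppose to the contrary that $\op{diam}(\eta([a,b]); \frk d_h) < 7r$, where $r := (b-a)^{(1+v)/2}$, and pick $z_0 \in \eta([a,b]) \cap \BB V_{\rho/2}$. Taking $\ep_0 \leq C^{-1}$, property \ref{item-sle-cont} forces $\eta([a,b]) \subset \BB V_\rho$. Since $z_0 \in \BB V_{\rho/2} \subset \BB V_\rho'$ has Euclidean distance at least $2/\rho$ from $\BB R$, property \ref{item-euclidean-holder} allows one to shrink $\ep_0$ so that $B_{8r}(z_0; \frk d_h) \subset B_{C(8r)^\beta}(z_0)$ is disjoint from $\BB R$; property \ref{item-area-upper} then yields $\mu_h(B_{8r}(z_0; \frk d_h)) \leq C(8r)^{4-u}$.

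The key combinatorial input is that $\eta$ is simple (since $\kappa = 8/3 \leq 4$) and parameterized by $\nu_h$: each $U \in \mcl U^- \cup \mcl U^+$ meeting $\eta([a,b])$ does so along a single sub-arc $[x_U, y_U]_{\bdy U} = \eta([a_U, b_U])$ whose quantum length equals $b_U - a_U$. Summing first over $U \in \mcl U^-$ and then over $U \in \mcl U^+$ yields
$$\sum_{U \in \mcl U^- \cup \mcl U^+} \nu_h\!\left( [x_U, y_U]_{\bdy U} \right) = 2(b-a).$$
The endpoints $x_U, y_U$ lie in $\bdy U \cap \BB V_\rho$ and $U \cap \BB V_\rho \neq \emptyset$, so property \ref{item-length-lower} applies to each such $U$ with $\delta = r$.

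The final step is to combine these estimates. Using $\frk d_{h|_U} \geq \frk d_h$ on $U$ (Remark \ref{remark-internal-compare}) together with the assumption $\eta([a,b]) \subset B_{7r}(z_0; \frk d_h)$, each $B_r([x_U, y_U]_{\bdy U}; \frk d_{h|_U})$ is contained in $B_{8r}(z_0; \frk d_h)$. Moreover, the interiors of these sets across different components $U$ are $\mu_h$-disjoint, since they sit in distinct connected components of $\BB H \setminus \eta$ (and $\mu_h$ assigns zero mass to $\eta \cup \BB R$). Summing the component-wise lower bounds gives
$$2 C^{-1} r^{2+u}(b-a) \leq \mu_h\!\left( B_{8r}(z_0; \frk d_h) \right) \leq C \cdot 8^{4-u} r^{4-u},$$
so after substituting $r = (b-a)^{(1+v)/2}$ one obtains $(b-a)^{u(1+v)-v} \leq \tfrac{1}{2} C^2 \cdot 8^{4-u}$. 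Since $u(1+v) - v < 0$ by the choice of $u_0$, the left-hand side diverges as $b-a \to 0$, which is the required contradiction once $\ep_0 = \ep_0(C,u,v)$ is small enough.

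The main obstacle is the thin-wedge case, in which $\eta$ hits $\BB R$ on a fractal set and $\mcl U^\pm$ contains countably many beads meeting any small neighborhood of a point of $\eta$. The bookkeeping hinges on simplicity of $\eta$ to guarantee that each $\bdy U \cap \eta([a,b])$ is a single sub-arc so that the total quantum length adds up to exactly $2(b-a)$, and on the component-wise disjointness of the sets $B_r([x_U, y_U]_{\bdy U}; \frk d_{h|_U})$, which is what allows the countable collection of local lower bounds from property \ref{item-length-lower} to be turned into a single upper bound against $\mu_h(B_{8r}(z_0; \frk d_h))$.
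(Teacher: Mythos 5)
Your proof is correct and follows essentially the same route as the paper's: an area comparison on $G_C$ between the lower bound of condition~\ref{item-length-lower} applied to the arc $\eta([a,b])$ inside a complementary component and the upper bound of condition~\ref{item-area-upper} applied to $B_{8r}(z_0;\frk d_h)$, with the same exponent bookkeeping forcing $u_0 < v/(1+v)$. Two small remarks. First, condition~\ref{item-sle-cont} as stated only controls $\eta(t)$ for times $t>s$ \emph{after} a time $s$ with $\eta(s)\in\BB V_{\rho/2}$, so it does not by itself give $\eta([a,b])\subset\BB V_\rho$ for the portion of the arc before $z_0$; but this containment follows anyway from the contradiction hypothesis together with condition~\ref{item-euclidean-holder} (which is what the paper uses), since $\eta([a,b])\subset B_{7r}(z_0;\frk d_h)\subset B_{C(8r)^\beta}(z_0)\subset\BB V_\rho$ once $\ep_0$ is small. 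Second, the countable decomposition over beads is unnecessary: since $B_{8r}(z_0;\frk d_h)$ is disjoint from $\BB R$, the connected arc $\eta([a,b])$ avoids $\BB R$ and hence lies on the boundary of exactly one component of each of $W^-$ and $W^+$ --- this is precisely the simplification the paper exploits (it uses a single $U\in\mcl U^-$). Your identity $\sum_U\nu_h([x_U,y_U]_{\bdy U})=2(b-a)$ is then just two terms; in a genuinely countable situation (arc meeting $\BB R$) it would additionally require knowing that $\eta$ spends zero quantum length on $\BB R$, which you do not justify, but that case does not arise here.
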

\begin{proof}
Throughout the proof we assume that $G_C$ occurs.
We first observe that condition~\ref{item-euclidean-holder} in Lemma~\ref{lem-metric-reg} implies that there is an $\ep_0 = \ep_0(C,\rho)$ such that for $\ep \in (0,\ep_0]$ and $z\in \BB V_{\rho/2}$,  
\eqb \label{eqn-sle-ball-contain}
B_{8 \ep^{(1+v)/2}}(z; \frk d_h) \subset  \BB V_{\rho } .
\eqe  
Suppose now that $0 < a < b < \infty$ such that $b-a \leq \ep_0$ and $\eta([a,b]) \cap \BB V_{\rho/2} \not=\emptyset$. Let $z\in \eta([a,b]) \cap \BB V_\rho$ and write $\delta := (b-a)^{(1+v)/2}$. It follows from~\eqref{eqn-sle-ball-contain} that $B_{8 \delta}(z; \frk d_h)$ does not intersect $\BB R$, and this combined with~\eqref{eqn-sle-ball-contain} implies there exists $U\in \mcl U^-$ such that $\eta([a,b]) \subset \bdy U \cap \BB V_\rho$.  
By condition~\ref{item-length-lower} in Lemma~\ref{lem-metric-reg} and since $\frk d_h(w_1,w_2) \leq \frk d_{h|_{U}}(w_1,w_2)$ for each $w_1,w_2 \in \ol U$  (by Lemma~\ref{prop-internal-equal} and Remark~\ref{remark-internal-compare}),  
\eqbn
\mu_h \left( B_{\delta } \left( \eta([a,b]) ; \frk d_h \right) \right) \geq \mu_h \left( B_{\delta } \left( \eta([a,b]) ; \frk d_{h|_U} \right) \right) \geq C^{-1} (b-a)^{ (2+u)(1+v)/2  +1} .
\eqen
By condition~\ref{item-area-upper}, 
\eqbn
\mu_h \left( B_{8 \delta }(z ; \frk d_h) \right) \leq 8^{4-u} C (b-a)^{(4-u)(1+v)/2} . 
\eqen
If $u$ is chosen sufficiently small relative to $v$, then after possibly shrinking $\ep_0$ we can arrange that $8^{4-u} C (b-a)^{(4-u)(1+v)/2}  <  C^{-1} (b-a)^{ (2+u)(1+v)/2  +1} $ whenever $b-a \leq \ep_0$, so $B_{\delta } \left( \eta([a,b]) ; \frk d_h \right)$ cannot be contained in $B_{8 \delta }(z ; \frk d_h)$. Therefore $\eta([a,b]) $ cannot be contained in $B_{7\delta }(z ; \frk d_h)$. 
\end{proof}
 
We next bound the number of $\delta^2$-length segments of $\eta$ which can intersect a $\frk d_h$-metric ball on the event $G_C$. Here we recall that $\eta$ is parameterized by $\nu_h$-length. 

\begin{lem} \label{prop-sle-ball-count}
For each $v \in (0,1)$, there exists $u_0 = u_0(v) \in (0,1)$ such that for $u\in (0,u_0]$, the following is true. Let $\rho>2$ and $C> 1$ and let $G_C = G_C(u , \rho)$ be the event of Lemma~\ref{lem-metric-reg}. There exists $\delta_0 = \delta_0(C,u,v) > 0$ such that the following is true almost surely on $G_C$. Let $z\in \BB V_{ \rho/2}$ and let $\delta \in (0,\delta_0]$. Then the number of $k\in \BB N$ for which $\eta([(k-1)\delta^2 , k\delta^2]) \cap B_{\delta^{1+v}}(z; \frk d_h) \not=\emptyset$ is at most $\delta^{-v}$.
\end{lem}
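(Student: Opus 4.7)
The plan is to combine Lemma~\ref{prop-sle-diam-lower} with the area and boundary-length bounds bundled into $G_C$. First I would choose $u_0(v)$ small enough that Lemma~\ref{prop-sle-diam-lower} applies with exponent $v$; then for every $k\in\mcl K$ and sufficiently small $\delta$, the segment $\eta_k:=\eta([(k-1)\delta^2,k\delta^2])$ has $\frk d_h$-diameter at least $7\delta^{1+v}$ while meeting $B_{\delta^{1+v}}(z;\frk d_h)$. Continuity of $\eta$ combined with the intermediate value theorem applied to $s\mapsto\frk d_h(z,\eta(s))$ then yields a sub-arc $\tilde\eta_k\subset\eta_k$ contained in $\ol{B_{3\delta^{1+v}}(z;\frk d_h)}$ whose $\frk d_h$-diameter is at least $\delta^{1+v}$. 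Conditions~\ref{item-euclidean-holder} and~\ref{item-sle-cont} ensure that, for $\delta$ sufficiently small, $\eta_k\subset\BB V_\rho$ and $B_{4\delta^{1+v}}(z;\frk d_h)\cap\BB R=\emptyset$, so that the estimates in $G_C$ may be applied freely.

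Each $\tilde\eta_k$ lies on $\bdy U_k$ for one of the two complementary components $U_k\in\{U_k^-,U_k^+\}$ of $\eta$ adjacent to $\eta_k$. Since $\frk d_{h|_{U_k}}\geq\frk d_h$, inverting condition~\ref{item-length-upper} of Lemma~\ref{lem-metric-reg} (applied in either orientation around $\bdy U_k$, so as to control the shorter arc, whose $\nu_h$-length is at most $\nu_h(\tilde\eta_k)$) gives $\nu_h(\tilde\eta_k)\geq c\,\delta^{2+2v+v'}$ for any chosen $v'>0$, once $\delta$ is small enough to absorb the logarithmic factor. Condition~\ref{item-length-lower} at scale $r_\delta:=\delta^{1+v}/10$ then yields
\[
\mu_h\!\left(B_{r_\delta}(\tilde\eta_k;\frk d_{h|_{U_k}})\right)\geq c'\,\delta^{(1+v)(2+u)+2+2v+v'},
\]
and each such neighborhood sits inside $U_k\cap B_{4\delta^{1+v}}(z;\frk d_h)$ because $\tilde\eta_k\subset B_{3\delta^{1+v}}(z;\frk d_h)$ and $\frk d_h\leq\frk d_{h|_{U_k}}$.

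Summing over $k\in\mcl K$ and comparing with $\mu_h(B_{4\delta^{1+v}}(z;\frk d_h))\leq C(4\delta^{1+v})^{4-u}$ from condition~\ref{item-area-upper}, while using that neighborhoods lying in distinct components $U_k$ are automatically disjoint, gives an inequality of the form
\[
|\mcl K|\leq M\cdot C'''\,\delta^{-2u(1+v)-v'},
\]
where $M$ bounds the maximal multiplicity with which a single point of $\BB H$ lies in the family $\{B_{r_\delta}(\tilde\eta_k;\frk d_{h|_U})\}_k$ restricted to a fixed $U\in\mcl U^-\cup\mcl U^+$. To bound $M$, suppose $M$ such neighborhoods share a common point $w\in U$; for each such $k$ pick a witness $q_k\in\tilde\eta_k$ with $\frk d_{h|_U}(w,q_k)\leq r_\delta$, so that the small balls $B_{r_\delta/2}(q_k;\frk d_{h|_U})$ all sit inside $B_{2r_\delta}(w;\frk d_{h|_U})\subset B_{5\delta^{1+v}}(z;\frk d_h)$. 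A greedy disjointification of these balls, combined with the interior lower bound~\ref{item-area-lower} on the selected sub-family and the global upper bound~\ref{item-area-upper}, yields $M\leq C\,\delta^{-2u(1+v)}$. Substituting back and choosing $u_0$ and $v'$ small enough in terms of $v$ so that $4u(1+v)+v'<v$, then shrinking $\delta_0$ to absorb the residual constant, gives $|\mcl K|\leq\delta^{-v}$.

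The main obstacle is precisely this multiplicity step: the $\sqrt{8/3}$-LQG metric has no a priori doubling property, so the greedy-disjointification bound on $M$ has to be squeezed from the two-sided area bounds~\ref{item-area-upper}--\ref{item-area-lower} by hand, and it is this trade-off (between the loss from the $\delta^{\pm u}$ factors in the area bounds and the gain from the boundary-length estimate) that fixes the quantitative dependence of $u_0$ on $v$.
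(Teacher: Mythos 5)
Your overall architecture matches the paper's up to a point: both proofs start from Lemma~\ref{prop-sle-diam-lower} to force each segment $\eta([(k-1)\delta^2,k\delta^2])$ meeting $B_{\delta^{1+v}}(z;\frk d_h)$ to exit a slightly larger ball, and both finish by packing internal-metric balls of mass $\succeq C^{-1}\delta^{(4+u)(1+v)}$ (condition~\ref{item-area-lower}) into a region of mass $\preceq C\delta^{(4-u)(1+v)}$ (condition~\ref{item-area-upper}). The steps up through the lower bound $\nu_h(\tilde\eta_k)\geq c\delta^{2+2v+v'}$ via condition~\ref{item-length-upper} and the application of condition~\ref{item-length-lower} are fine. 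But there is a genuine gap at the multiplicity step, and it is not a technicality: the ``greedy disjointification'' does not yield $M\leq C\delta^{-2u(1+v)}$. A maximal pairwise-disjoint subfamily of the balls $B_{r_\delta/2}(q_k;\frk d_{h|_U})$ bounds only the cardinality of the \emph{selected} subfamily; it says nothing about how many of the original $M$ balls were discarded, since arbitrarily many of the witness points $q_k$ can cluster at a single point of $\ol U$ (distinct arcs of a simple curve can pass within any prescribed $\ep>0$ of one another, in which case the corresponding balls essentially coincide and no area estimate distinguishes them). Recovering a covering-multiplicity bound from a maximal disjoint subfamily is exactly where a doubling property would be needed, and the two-sided bounds~\ref{item-area-upper}--\ref{item-area-lower} do not substitute for it here because they constrain masses of balls, not the number of distinct curve segments passing through a ball. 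So the inequality $|\mcl K|\leq M\cdot C'''\delta^{-2u(1+v)-v'}$ is left with an uncontrolled factor $M$.

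What is missing is a topological input, and this is precisely how the paper's proof differs from yours. Instead of summing masses of boundary-arc neighborhoods, the paper first converts the count of $\delta^2$-segments into a count of \emph{crossings} of $\eta$ between $B_{\delta^{1+v}}(z;\frk d_h)$ and $\BB H\setminus B_{2\delta^{1+v}}(z;\frk d_h)$, and then observes that each such crossing of the simple curve $\eta$ creates a new connected component $O$ of $V\setminus\eta$ (for $V$ a component of $\BB H\setminus B_{\delta^{1+v}}(z;\frk d_h)$) reaching $V\setminus B_{2\delta^{1+v}}(z;\frk d_h)$; this is the inequality~\eqref{eqn-component-excursion}. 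One then places a single internal-metric ball $B_O$ in each such component $O$, and the disjointness of the balls is automatic because they lie in pairwise distinct complementary components of $\eta$ --- no multiplicity bound is ever needed. If you want to rescue your argument, you should replace the multiplicity step by this component-counting argument (or an equivalent separation statement showing that distinct segments contributing to $\mcl K$ give rise to disjoint regions of definite mass); as written, the proof does not go through.
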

\begin{proof}
See Figure~\ref{fig-sle-ball} for an illustration of the argument.
Throughout the proof we assume that $G_C$ occurs. As in the proof of Lemma~\ref{prop-sle-diam-lower}, if $z$ is as in the statement of the lemma then
\eqb \label{eqn-sle-ball-contain'}
B_{3\delta^{1+v}}(z; \frk d_h) \subset  \BB V_{\rho }
\eqe
for small enough $\delta$ (depending only on $C$ and $\rho$).  By Lemma~\ref{prop-sle-diam-lower}, if $u\in(0,1)$ is chosen sufficiently small relative to $v$, then for small enough $\delta>0$ (depending only on $C$, $u$, and $v$) there cannot exist $k\in\BB N$ for which $\eta([(k-1)\delta^2 ,k\delta^2]) \subset B_{2\delta^{1+v}}(z ; \frk d_h)$.  Henceforth assume that $\delta$ and $u$ are chosen so that the above conditions are satisfied.

Let~$K$ be the set of $k\in \BB N$ for which $\eta([(k-1)\delta^2 , k\delta^2]) \cap B_{\delta^{1+v}}(z;\frk d_h) \not=\emptyset$. By our choices of~$\delta$ and~$u$, each curve segment $\eta([(k-1)\delta^2 , k\delta^2])$ for $k\in K$ also intersects $\BB H\setminus  B_{2 \delta^{1+v}}(z ; \frk d_h)$. 

To complete the proof, we will first use a topological argument to show that the number of connected components of $\BB H \setminus (\eta \cup B_{\delta^{1+v}}(z;\frk d_h))$ which intersect $\BB H\setminus B_{2\delta^{1+v}}(z;\frk d_h)$ is at least $(\# K-2)/2$. We will then use the definition of $G_C$ to argue that each such connected component $O$ contains a one-sided metric ball $B_O \subset O\cap B_{2\delta^{1+v}}(z;\frk d_h)$ which has $\mu_h$-mass at least a constant times $\delta^{(4+u)(1+v)}$. Hence the number of such components $O$ is at most a constant times $\delta^{-(4+u)(1+v)} \mu_h(B_{2\delta^{1+v}}(z;\frk d_h))$, which can be bounded above by $\delta^{-v}$ by the definition of $G_C$. 
\medskip

\noindent\textit{Step 1: counting complementary connected components.}
Let $\mcl V$ be the set of connected components of $\BB H\setminus B_{\delta^{1+v}}(z;\frk d_h)$. For $V\in \mcl V$, let $\mcl O_V$ be the set of connected components $O$ of $V\setminus \eta$ such that $O$ intersects $V\setminus B_{2\delta^{1+v}}(z;\frk d_h)$. Note that $\bigcup_{V\in\mcl V} \mcl O_V$ is the set of connected components of $\BB H \setminus (\eta \cup B_{\delta^{1+v}}(z;\frk d_h))$ which intersect $\BB H\setminus B_{2\delta^{1+v}}(z;\frk d_h)$. We first argue that
\eqb \label{eqn-component-excursion}
\# K \leq  2 \sum_{V\in \mcl V} \# \mcl O_V + 2.
\eqe 
To see this, fix $V\in \mcl V$. Let $\{ [\ul\tau_{V,j} , \ol\tau_{V,j} ] \}_{j \in [1,J_V]_{\BB Z} }$ be the ordered sequence of time intervals for $\eta$ with the property that $\eta((\ul\tau_{V,j} , \ol\tau_{V,j})) \subset V$, $\eta(\ul\tau_{V,j} )  , \eta(\ol\tau_{V,j}) \in \bdy V$, and $\eta((\ul\tau_{V,j} , \ol\tau_{V,j})) $ intersects $V\setminus B_{2\delta^{1+v}}(z ; \frk d_h)$. In other words, the segments $\eta([\ul\tau_{V,j} , \ol\tau_{V,j}])$ are the excursions of $\eta$ into $V$ which hit $\BB H\setminus B_{2\delta^{1+v}}(z ; \frk d_h)$. 
Note that the number $J_V$ of such time intervals is finite by continuity and transience of $\eta$ and since $V\setminus B_{2\delta^{1+v}}(z ; \frk d_h)$ lies at positive Euclidean distance from $\bdy V$. 

The set $V$ has the topology of the disk (or the complement of a disk in $\BB H$, in the case of the unbounded component). Since $\eta$ does not hit itself each curve segment $\eta((\ul\tau_{V,j} , \ol\tau_{V,j}))$ is contained in a single connected component of $V\setminus \eta([0,\ul\tau_{V,j}])$, which also has the topology of a disk (or the complement of a disk in $\BB H$ if $V$ is the unbounded component). Since $\eta(\ul\tau_{V,j} )  , \eta(\ol\tau_{V,j}) \in \bdy V$, the segment $\eta([\ul\tau_{V,j} , \ol\tau_{V,j}])$ divides this connected component into at least two further components. Since $\eta((\ul\tau_{V,j} , \ol\tau_{V,j})) $ intersects $V\setminus B_{2\delta^{1+v}}(z ; \frk d_h)$, each such component also intersects $V\setminus B_{2\delta^{1+v}}(z ; \frk d_h)$. It follows that $J_V \leq \#\mcl O_V $. 

Since each segment  $\eta([(k-1)\delta^2 , k\delta^2])$ for $k\in K$ intersects both $ B_{\delta^{1+v}}(z;\frk d_h)$ and $\BB H\setminus B_{2\delta^{1+v}}(z;\frk d_h)$, each interval $ [(k-1)\delta^2 , k\delta^2] $ for $k\in K$ intersects $[\ul\tau_{V,j} , \ol\tau_{V,j} ]$ for some $V\in \mcl V$ and some $j\in [1,J_V]_{\BB Z}$, except possibly the first and last such intervals. Furthermore, since $\eta([\ul\tau_{V,j} , \ol\tau_{V,j} ])$ intersects $ \bdy B_{\delta^{1+v}}(z;\frk d_h)$ only at its endpoints, each such interval $[\ul\tau_{V,j} , \ol\tau_{V,j} ]$  intersects at most two intervals of the form $[(k-1)\delta^2 , k\delta^2]$ for $k\in K$. Therefore, $\# K \leq 2 \sum_{V\in\mcl V} J_V+2 $, whence~\eqref{eqn-component-excursion} holds. 
\medskip

\noindent\textit{Step 2: metric balls in connected components.}
Now we will prove an upper bound for the right side of~\eqref{eqn-component-excursion}. 
Let $V\in \mcl V$ and $O\in \mcl O_V$. 
We can choose $w_O \in \bdy O \cap \eta$ such that $\frk d_h \left(w_O , \bdy B_{\delta^{1+v}}(z) \right) = \frk d_h \left(w_O , \bdy B_{2\delta^{1+v}}(z) \right) = \frac14 \delta^{1+v}$.  We have $O\subset  U_O$ for some connected component $U_O \in \mcl U^- \cup \mcl U^+$ of $\BB H\setminus \eta$. 
Let $B_O$ be the open $\frk d_{h|_{U_O}}$-ball of radius $\frac14 \delta^{1+v}$ centered at $w_O$. Since $\eta$ does not cross $B_O$ and $\frk d_{h|_{U_O}} \geq \frk d_h$, 
\eqbn 
B_O \subset B_{\frac14\delta^{1+v}}(w_O ; \frk d_h) \subset B_{2 \delta^{1+v}}(z ; \frk d_h) \setminus B_{ \delta^{1+v}}(z ; \frk d_h) .
\eqen
Therefore, $B_O \subset O$ so in particular $B_O\cap B_{O'}  = \emptyset$ for distinct $O,O' \in \bigcup_{V\in \mcl V} \mcl O_V$. 

By~\eqref{eqn-sle-ball-contain'} and condition~\ref{item-area-lower} in Lemma~\ref{lem-metric-reg}, for each $O \in \bigcup_{V\in \mcl V} \mcl O_V$ we have $\mu_h(B_O) \succeq C^{-1} \delta^{(4+u)(1+v)}$, with universal implicit constant. Therefore, condition~\ref{item-area-upper} in Lemma~\ref{lem-metric-reg} implies that 
\eqbn
 C^{-1} \delta^{(4+u)(1+v)}  \sum_{V\in \mcl V} \# \mcl O_V   \preceq  \mu_h \left(B_{2 \delta^{1+v}}(z ; \frk d_h) \right) \preceq C\delta^{(4-u)(1+v)}  
\eqen
with universal implicit constants. By combining this with~\eqref{eqn-component-excursion} we get
\eqbn
\# K \preceq C^2 \delta^{-2u(1+v)} .
\eqen
After possibly shrinking $u$ and $\delta$, we obtain the statement of the lemma.
\end{proof}
 
\begin{figure}[ht!]
 \begin{center}
\includegraphics{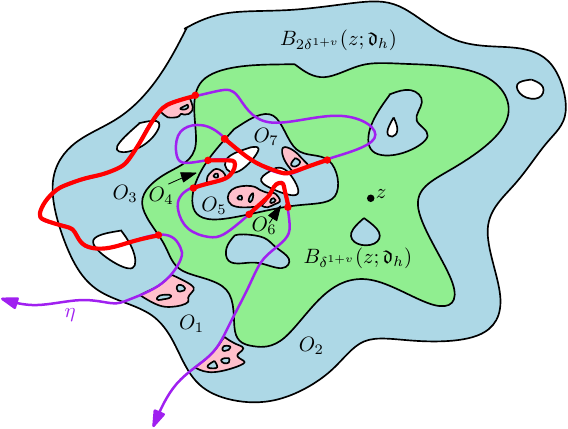} 
\caption[Illustration of the proof of Lemma~\ref{prop-sle-ball-count}]{Illustration of the proof of Lemma~\ref{prop-sle-ball-count}. The balls $B_{\delta^{1+v}}(z ; \frk d_h)$ and $B_{2\delta^{1+v}}(z ; \frk d_h)$ are shown in light green and light blue, respectively. We first use the upper bound for the area of $B_{3\delta^{1+v}}(z;\frk d_h)$ to argue that on $G_C$, no curve segment of the form $\eta([(k-1)\delta^2 , k\delta^2])$ can be completely contained in $B_{2\delta^{1+v}}(z;\frk d_h)$. This means that the number of such segments can be bounded above by twice the number of excursions of $\eta$  into a connected component $V$ of $\BB H\setminus B_{\delta^{1+v}}(z;\frk d_h)$ which intersects $\BB H\setminus B_{2\delta^{1+v}}(z;\frk d_h)$ (each such excursion is shown in red). 
Topological considerations imply that each such excursion $\eta([\ul\tau_{V,j} , \ol\tau_{V,j}])$ gives rise to at least one connected component $O$ of $\BB H\setminus (\eta \cup B_{\delta^{1+v}}(z ; \frk d_h))$ which intersects $\BB H\setminus B_{2\delta^{1+v}}(z;\frk d_h)$ (here there are 7 such components). 
To estimate the number of such components $O$, we consider for each $O$ an appropriate $\frk d_{h|_{W^\pm}}$-ball $B_O$ (shown in pink) contained in $O$. The balls $B_O$ are disjoint and each is contained in $B_{2\delta^{1+v}}(z ; \frk d_h) $. By definition of $G_C$, we have a lower bound for the quantum mass of each $B_O$ and an upper bound for the quantum mass of $B_{2\delta^{1+v}}(z ; \frk d_h)$. By comparing these bounds, we obtain an upper bound on the number of sets $B_O$ and hence the number of components $O$. }\label{fig-sle-ball}
\end{center}
\end{figure}

Next we bound the number of segments of $\eta$ with quantum length $\delta^2$ which are hit by a $\frk d_h$-geodesic between two typical points. 

\begin{lem}  \label{prop-sle-intersect-mean}
Let $v\in (0,1)$ and let $u_0 = u_0(v)$ be chosen as in Lemma~\ref{prop-sle-ball-count}. Let $u\in (0,u_0]$, $\rho > 2$, and $C> 1$ and let $G_C = G_C(u , \rho)$ be as in Lemma~\ref{lem-metric-reg}.  Let $z_1,z_2 \in \BB V_\rho$ and let $\gamma_{z_1,z_2}$ be a $\frk d_h$-geodesic from $z_1$ to $z_2$ which is contained in $\BB V_{\rho/2}$, all chosen in a manner which is independent from $\eta$ (viewed as a curve modulo parameterization). For $\delta \in (0,1)$, let $\mcl K_{z_1,z_2}^\delta$ be the set of $k\in\BB N$ for which $\gamma_{z_1,z_2} \cap \eta([(k-1)\delta^2 , k\delta^2]) \not=\emptyset$. There is an exponent $\alpha > 0$, depending only on $\frk w^-,\frk w^+$, and the exponent $\beta$ from Lemma~\ref{lem-metric-reg}, such that for $\delta \in (0,1)$, 
\eqb \label{eqn-sle-intersect-mean}
\BB E \left[\#\mcl K^\delta_{z_1,z_2} \BB 1_{G_C} \,|\, h, \gamma_{z_1,z_2} \right] \preceq   \delta^{-1-2v+ \alpha (1+v)}  \frk d_h(z_1,z_2)
\eqe
where the implicit constant in $\preceq$ is deterministic and depends only on $u, v,C,$ and $\rho$.
\end{lem}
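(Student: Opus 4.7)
The plan is to cover the geodesic by $\frk d_h$-balls of radius $\delta^{1+v}$, use Lemma~\ref{prop-sle-ball-count} to bound the number of $\delta^2$-length SLE segments intersecting each ball, and then exploit the independence of $\eta$ from $(h,\gamma_{z_1,z_2})$ together with item~\ref{item-euclidean-holder} of $G_C$ and a standard one-point SLE hitting estimate to show that only a $\delta^{\alpha(1+v)}$-fraction of these balls are actually met by $\eta$. The naive bound from just the first two ingredients gives $\#\mcl K^\delta_{z_1,z_2}\BB 1_{G_C} \preceq \delta^{-1-2v}\frk d_h(z_1,z_2)$; the gain of $\delta^{\alpha(1+v)}$ comes from the hitting estimate.

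Concretely, set $L := \frk d_h(z_1,z_2)$, parameterize $\gamma_{z_1,z_2}$ by $\frk d_h$-arclength, and define $y_j := \gamma_{z_1,z_2}((j-1)\delta^{1+v}) \in \BB V_{\rho/2}$ for $j = 1,\dots,N$ with $N := \lceil L\delta^{-(1+v)}\rceil$. The segment of $\gamma_{z_1,z_2}$ over the $j$th dyadic interval lies in $B_{\delta^{1+v}}(y_j;\frk d_h)$, so by Lemma~\ref{prop-sle-ball-count} (taking $u$ small enough relative to $v$, and restricting to $\delta \leq \delta_0$), on $G_C$ at most $\delta^{-v}$ values of $k$ satisfy $\eta([(k-1)\delta^2,k\delta^2]) \cap B_{\delta^{1+v}}(y_j;\frk d_h) \neq \emptyset$. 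Hence
\[
\#\mcl K^\delta_{z_1,z_2}\,\BB 1_{G_C} \;\leq\; \delta^{-v}\sum_{j=1}^{N} \BB 1_{G_C}\,\BB 1_{\{\eta\,\cap\,B_{\delta^{1+v}}(y_j;\frk d_h)\neq\emptyset\}}.
\]

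Now, item~\ref{item-euclidean-holder} of Lemma~\ref{lem-metric-reg} implies that on $G_C$, $B_{\delta^{1+v}}(y_j;\frk d_h) \subset B_{C\delta^{\beta(1+v)}}(y_j)$ in the Euclidean sense, so the indicator on the right is at most $\BB 1_{\{\eta \,\cap\, B_{C\delta^{\beta(1+v)}}(y_j)\neq\emptyset\}}$. Since $\eta$ is independent of $h$ and $\gamma_{z_1,z_2}$ is chosen independently of $\eta$, the conditional law of $\eta$ given $(h,\gamma_{z_1,z_2})$ is the unconditional law of a chordal SLE$_{8/3}(\frk w^--2;\frk w^+-2)$ from $0$ to $\infty$ in $\BB H$. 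Each $y_j$ is $(h,\gamma_{z_1,z_2})$-measurable and takes values in $\BB V_{\rho/2}$, a compact set bounded away from the force points $\{0,\infty\}$ and from $\BB R$. The one-point SLE hitting estimate (the analog of the SLE Green's function bound in the bulk, away from force points) then yields, for some $\alpha_0 > 0$ and a constant $C_\rho' > 0$ depending only on $\rho$ and the weights,
\[
\BB P\!\left[\eta \cap B_{C\delta^{\beta(1+v)}}(y_j)\neq\emptyset \,\big|\, h,\gamma_{z_1,z_2}\right] \;\leq\; C_\rho'\,\delta^{\alpha_0\beta(1+v)}.
\]
Summing over $j \leq N \leq \delta^{-(1+v)}L + 1$ and setting $\alpha := \alpha_0\beta$ gives the bound~\eqref{eqn-sle-intersect-mean} in the regime $L \geq \delta^{1+v}$. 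The regime $L < \delta^{1+v}$ is handled analogously using a single ball: $\gamma_{z_1,z_2} \subset B_L(z_1;\frk d_h) \subset B_{CL^\beta}(z_1)$, and the one-point bound gives $\BB E[\#\mcl K^\delta_{z_1,z_2}\BB 1_{G_C}|h,\gamma_{z_1,z_2}] \leq \delta^{-v}\cdot C_\rho'(CL^\beta)^{\alpha_0}$, which is $\preceq \delta^{-1-2v+\alpha(1+v)}L$ since $L^{\beta\alpha_0} \leq (\delta^{1+v})^{\beta\alpha_0}L/L \cdot L^{1-\beta\alpha_0} \leq \delta^{1+v}L^{-\alpha}\cdot L$ (with minor bookkeeping). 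The range $\delta > \delta_0$ is absorbed into the implicit constant, using that $\eta \cap \BB V_\rho$ has finite quantum length so $\#\mcl K^\delta_{z_1,z_2}$ is almost surely finite.

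The main obstacle is the one-point hitting estimate used in the second step: establishing that a chordal SLE$_{8/3}(\rho^L;\rho^R)$ process enters a fixed Euclidean ball of radius $r$ centered at $z \in \BB V_{\rho/2}$ with probability at most $C_\rho' r^{\alpha_0}$ for some $\alpha_0 > 0$, uniformly in $z$. For ordinary SLE$_\kappa$ this follows from the Green's function, which is bounded away from the starting and ending points; the extension to SLE$_\kappa(\rho)$ with force points only at $0$ and $\infty$ follows either by absolute continuity with respect to ordinary SLE$_\kappa$ on a compact set in the bulk away from $0$ and $\infty$, or from standard martingale estimates for the driving function. Any positive value of $\alpha_0$ suffices for the conclusion.
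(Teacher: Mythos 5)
Your proposal is correct and follows essentially the same route as the paper's proof: cover the geodesic by $\frk d_h$-balls of radius $\delta^{1+v}$, apply Lemma~\ref{prop-sle-ball-count} to get at most $\delta^{-v}$ segments per ball, pass to Euclidean balls via condition~\ref{item-euclidean-holder} of $G_C$, and use independence of $\eta$ together with a one-point hitting estimate $\BB P[\eta\cap B_\ep(w)\neq\emptyset]\preceq \ep^{\alpha_0}$ (which the paper justifies by iterating the domain Markov property and transience of SLE$_\kappa(\rho^L;\rho^R)$, while you propose absolute continuity with ordinary SLE --- either works), yielding $\alpha=\alpha_0\beta$. The only divergence is your extra treatment of the regime $\frk d_h(z_1,z_2)<\delta^{1+v}$, whose bookkeeping as written does not actually close (the single-ball bound $\delta^{-v}L^{\beta\alpha_0}$ is not $\preceq\delta^{-1-2v+\alpha(1+v)}L$ as $L\to 0$ when $\beta\alpha_0<1$); the paper simply absorbs the $+1$ in $N$ into the implicit constant, and this regime is immaterial to how the lemma is applied.
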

\begin{proof}
By possibly increasing the implicit constant in~\eqref{eqn-sle-intersect-mean}, it suffices to prove the estimate for $\delta \in (0,\delta_0]$, where $\delta_0= \delta_0(C,u,v)$ is as in Lemma~\ref{prop-sle-ball-count}.  
Fix $z_1,z_2$ as in the statement of the lemma. Let $N := \lfloor  \delta^{-1-v}\frk d_h(z_1,z_2) \rfloor + 1$. For $j\in [1,N-1]_{\BB Z}$ let $t_j :=   j \delta^{1+v}$ and let $t_N := \frk d_h(z_1,z_2)$. Also let $V_j := B_{\delta^{1+v}}(\gamma_{z_1,z_2}(t_j) ; \frk d_h)$. Since $\gamma_{z_1,z_2}$ travels one unit of quantum distance in one unit of time, the union of the balls $V_j$ covers $\gamma_{z_1,z_2}$ and the intersection of any four such balls is empty.  Let $\mcl J_{z_1,z_2}^\delta$ be the number of $j\in [1,N]_{\BB Z}$ for which $\eta\cap V_j\not=\emptyset$.  
Lemma~\ref{prop-sle-ball-count} implies that for $\delta \in (0,\delta_0]$, if $G_C$ occurs then for each $j\in [1,N ]_{\BB Z}$ there are at most $ \delta^{-v}$ elements of $\mcl K_{z_1,z_2}^\delta$ for which $\eta([(k-1)\delta^2 , k\delta^2])$ intersects $V_j$.
Hence for any such $\delta$, 
\eqb \label{eqn-sle-intersect-compare}
\#\mcl K_{z_1,z_2}^\delta \leq  3 \delta^{- v} \#  \mcl J_{z_1,z_2}^\delta .
\eqe  
 
In light of~\eqref{eqn-sle-intersect-compare}, it remains only to show that
\eqb  \label{eqn-geodesic-length}
\BB E \left[ \#  \mcl J_{z_1,z_2}^\delta \BB 1_{G_C} \,|\, h, \gamma_{z_1,z_2} \right]   \preceq   \frk d_h(z_1,z_2) \delta^{ -1-v + \alpha (1+v)} 
\eqe 
for appropriate $\alpha>0$ as in the statement of the lemma.
To this end, define the balls $V_j$ as above. By condition~\ref{item-euclidean-holder} in Lemma~\ref{lem-metric-reg} (recall that we have assumed that $\gamma_{z_1,z_2} \subset \BB V_{\rho/2}$), for each $j\in [1,N]_{\BB Z}$, the metric ball $V_j$ is contained in the Euclidean ball $\wt V_j$ of radius $C \delta^{\beta(1+v) }$ and the same center as $V_j$, where $\beta>0$ is as in condition~\ref{item-euclidean-holder}. 

Since $\eta$ is an SLE$_{8/3}(\frk w^--2;\frk w^+-2)$, Lemma~\ref{lem-sle-hit} implies that there is an $\alpha_0 = \alpha_0(\frk w^-,\frk w^+)$ such that
\eqb \label{eqn-eta-hit}
\BB P \left[ \eta \cap B_\ep(w) \not=\emptyset   \right] \preceq \ep^{\alpha_0} , \quad \forall w \in \BB V_{\rho/2}
\eqe
with the implicit constant depending only on $\rho$, $C$, $\frk w^-$, and $\frk w^+$. 
By~\eqref{eqn-eta-hit} and the independence of $(h,\gamma_{z_1,z_2})$ and $\eta$, viewed modulo time parameterization, we find that for each $j\in [1,N]_{\BB Z}$,
\eqbn
\BB P \left[ \eta \cap \wt V_j \not=\emptyset ,\, G_C \,|\, h, \gamma_{z_1,z_2} \right] \preceq   \delta^{\alpha_0 \beta (1+v)} .
\eqen
By summing over all $j\in [1,N]_{\BB Z}$ we obtain~\eqref{eqn-geodesic-length} with $\alpha = \alpha_0 \beta$. 
\end{proof}

\begin{figure}[ht!]
 \begin{center}
\includegraphics{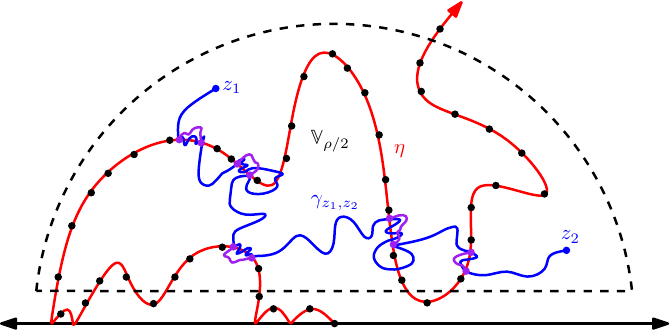} 
\caption[Re-routing procedure used in the proof of Lemma~\ref{prop-gluing-uniform}]{Illustration of the proof of Lemma~\ref{prop-gluing-uniform}. We sample $z_1,z_2$ uniformly from $\mu_h|_{\BB V_R}$ and consider the $\frk d_h$-geodesic $\gamma_{z_1,z_2}$ (blue) from $z_1$ to $z_2$, which is typically contained in $\BB V_{\rho/2}$ for some $\rho > R$. We divide the segment of $\eta$ (red) which is contained in $\BB V_{\rho/2}$ into increments of quantum length $\delta^2$. Lemma~\ref{prop-sle-intersect-mean} implies that on $G_C$, the number of such segments which are hit by $\gamma_{z_1,z_2}$ is at most $\delta^{-1 + \alpha'}$ for a small positive constant $\alpha'$. On the other hand, condition~\ref{item-length-upper} in the definition of $G_C$ (recall Lemma~\ref{lem-metric-reg}) implies that for each $\delta^2$-length segment of $\eta$ which is hit by $\gamma_{z_1,z_2}$, we can find a complementary connected component $U\in \mcl U^-$ and a $\frk d_{h|_U}$-geodesic between the first and last points of this segment which are hit by $\gamma_{z_1,z_2}$ whose length is at most $\delta^{1 + o_\delta(1)}$. These one-sided geodesics are shown in purple. We then concatenate these one-sided geodesics with the segments of $\gamma_{z_1,z_2}$ between the times when it hits $\eta$ to obtain a path from $z_1$ to $z_2$ which crosses $\eta$ only finitely many times and whose length is at most $\frk d_h(z_1,z_2) + o_\delta(1)$. }\label{fig-gluing-uniform}
\end{center}
\end{figure}

The following lemma shows that the metric $\frk d_h$ is equal to the quotient metric in Theorem~\ref{thm-wedge-gluing} at quantum typical points.

\begin{lem} \label{prop-gluing-uniform}
Fix $R> 1$. Let $z_1,z_2$ be sampled uniformly from $\mu_h|_{\BB V_R}$, normalized to be a probability measure. Let $\wt{\frk d}_h$ be the quotient metric on $\BB H$ obtained by gluing the metric spaces $(U , \frk d_{h|_U})$ according to the natural identification. Almost surely, we have 
\eqbn
\frk d_h(z_1,z_2) = \wt{\frk d}_h(z_1,z_2). 
\eqen
\end{lem}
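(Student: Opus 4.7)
The inequality $\wt{\frk d}_h(z_1,z_2) \geq \frk d_h(z_1,z_2)$ is immediate: each $\frk d_{h|_U}$ dominates $\frk d_h$ on $\ol U$ by Remark~\ref{remark-internal-compare}, and since the natural identification only glues points of $\eta$ that already coincide as points of $\BB H$, the triangle inequality applied to any admissible sequence in~\eqref{eqn-quotient-def} yields $\sum_i \frk d_{h|_{U_i}}(x_i,y_i) \geq \frk d_h(z_1,z_2)$. For the reverse inequality we approximate the a.s.\ unique $\frk d_h$-geodesic $\gamma = \gamma_{z_1,z_2}$ (which by Lemma~\ref{prop-geodesic-bdy} is disjoint from $\BB R \cup \{\infty\}$ a.s.) by paths in the quotient that cross $\eta$ only finitely many times. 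The chief obstacle is that $\gamma$ could in principle hit $\eta$ along a fractal set, so a naive decomposition of $\gamma$ into pieces lying in single components of $\BB H\setminus\eta$ need not be admissible for~\eqref{eqn-quotient-def}; the fix is to discretize $\eta$ at quantum-length scale $\delta^2$ and replace each discrete arc hit by $\gamma$ with a single one-sided detour, then send $\delta\downarrow 0$.

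Fix $\ep > 0$. Pick $v \in (0,1)$ small enough that $\alpha(1+v) - 2v > 0$, where $\alpha$ is the exponent of Lemma~\ref{prop-sle-intersect-mean}; then $u \in (0, u_0(v))$ with $u_0$ as in Lemmas~\ref{prop-sle-ball-count} and~\ref{prop-sle-intersect-mean}; then $\rho > R$ large enough that $\BB P[\gamma \subset \BB V_{\rho/2}] \geq 1 - \ep$ (possible by Lemma~\ref{prop-geodesic-bdy}); finally $C > 1$ so that $\BB P[G_C(u,\rho)] \geq 1 - \ep$ via Lemma~\ref{lem-metric-reg}. Let $E := G_C(u,\rho) \cap \{\gamma \subset \BB V_{\rho/2}\}$, so $\BB P[E] \geq 1-2\ep$. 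Since $h, z_1, z_2$, and hence $\gamma$, are independent of $\eta$ modulo parameterization, Lemma~\ref{prop-sle-intersect-mean} applies.

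For small $\delta > 0$, let $\mcl K^\delta$ be as in Lemma~\ref{prop-sle-intersect-mean} and for each $k \in \mcl K^\delta$ set $\sigma_k := \inf\{t : \gamma(t) \in \eta([(k-1)\delta^2,k\delta^2])\}$ and $\tau_k := \sup\{t : \gamma(t) \in \eta([(k-1)\delta^2,k\delta^2])\}$. Since $\eta$ is simple, $\eta([(k-1)\delta^2,k\delta^2]) \subset \bdy U_k$ for some $U_k \in \mcl U^- \cup \mcl U^+$; applying condition~\ref{item-length-upper} of $G_C$ (once, or twice with $\eta((k-1)\delta^2)$ or $\eta(k\delta^2)$ as an intermediate waypoint in case the counterclockwise arc of $\bdy U_k$ from $\gamma(\sigma_k)$ to $\gamma(\tau_k)$ goes the long way around $\bdy U_k$) we obtain a path $\gamma_k \subset \ol U_k$ with $\frk d_{h|_{U_k}}$-length at most $O(\delta(\log(1/\delta)+1)^2)$. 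Enumerate $\mcl K^\delta$ as $k_1,k_2,\ldots$ in increasing order of $\sigma_k$ and initialize $t := 0$: for each $i$, if $\sigma_{k_i} \geq t$ append to a running concatenation the segment $\gamma|_{[t,\sigma_{k_i}]}$ followed by the detour $\gamma_{k_i}$, and in every case update $t := \max(t, \tau_{k_i})$; at the end append $\gamma|_{[t,\frk d_h(z_1,z_2)]}$. The invariant $t \geq \tau_{k'}$ for all previously handled $k'$ guarantees that each appended $\gamma$-piece lies in a single component of $\BB H\setminus\eta$, since any hit of $\gamma$ with $\eta$ on an open $\gamma$-segment would lie in some $[\sigma_{k'},\tau_{k'}]$ already absorbed by the pointer or one with $\sigma_{k'}$ strictly beyond the current segment. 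The result is an admissible sequence for~\eqref{eqn-quotient-def} of total length at most $\frk d_h(z_1,z_2) + \#\mcl K^\delta \cdot O(\delta(\log(1/\delta)+1)^2)$. By Lemma~\ref{prop-sle-intersect-mean} and the choice of $v$, conditional on $(h,\gamma)$ we have $\BB E[\#\mcl K^\delta \BB 1_E \mid h,\gamma] \preceq \delta^{-1+\eta'}\,\frk d_h(z_1,z_2)$ for some $\eta' > 0$; Markov's inequality plus the first Borel--Cantelli lemma applied along $\delta_n = 2^{-n}$ show that $\#\mcl K^{\delta_n} \cdot \delta_n (\log(1/\delta_n)+1)^2 \to 0$ a.s.\ on $E$ (using that $\frk d_h(z_1,z_2) < \infty$ a.s.). Therefore $\wt{\frk d}_h(z_1,z_2) \leq \frk d_h(z_1,z_2)$ a.s.\ on $E$, and letting $\ep \downarrow 0$ along a countable sequence completes the proof.
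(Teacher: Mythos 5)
Your proposal is correct and follows essentially the same route as the paper: truncate on $G_C$ and on the geodesic staying in $\BB V_{\rho/2}$, bound the number of $\delta^2$-length SLE segments hit via Lemma~\ref{prop-sle-intersect-mean}, replace each hit segment by a one-sided detour of length $O(\delta(\log(1/\delta)+1)^2)$ using condition~\ref{item-length-upper}, and concatenate with the intermediate geodesic pieces (your greedy pointer is the paper's inductive choice of $k_1 < k_2 < \cdots$ in disguise). The only cosmetic differences are that you run Borel--Cantelli along $\delta_n = 2^{-n}$ where the paper applies Markov's inequality at a fixed $\delta$ and then lets $\delta \to 0$ and $p \to 0$, and that you flag the minor waypoint issue in applying the counterclockwise-arc bound; neither changes the substance.
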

\begin{proof}
See Figure~\ref{fig-gluing-uniform} for an illustration of the proof. 
Fix $p\in (0,1)$. Also let $\alpha$ be as in Lemma~\ref{lem-metric-reg}, let $v \in (0,\alpha/100)$, and let $u \in (0,u_0]$ where $u_0=u_0(v)$ is as in Lemma~\ref{prop-sle-ball-count}. Let $\gamma_{z_1,z_2}$ be the $\frk d_h$-geodesic from $z_1$ to $z_2$, which by Lemma~\ref{prop-geodesic-bdy} is a.s.\ unique and a.s.\ does not intersect $\BB R \cup \{\infty\}$. 
We can find $\rho > 2$ such that with probability at least $1-p/5$,  
\eqbn
\gamma_{z_1,z_2} \subset \BB V_{ \rho/2} \quad \op{and}\quad  \frk d_h(z_1,z_2) \leq \rho .
\eqen
Let $E_\rho$ be the event that this is the case. Also choose $C>0$ such that for this choice of $\rho$ and $u$ as above, the event $G_C = G_C(u,\rho)$ occurs with probability at least $1-p/5$. 

By taking the expectation of the estimate from Lemma~\ref{prop-sle-intersect-mean}, we obtain, in the notation of that lemma,
\eqbn
\BB E \left[\#\mcl K_{z_1,z_2} \BB 1_{G_C \cap E_\rho}  \right] \preceq \delta^{-1-2v+ \alpha (1+v)}  
\eqen
with the implicit constant depending only on $u, v,C,$ and $\rho$. By Markov's inequality, there exists $\delta_0 = \delta_0(u, v,C, \rho) > 0$ such that for $\delta  \in (0,\delta_0]$, it holds with probability at least $1-p/2$ that $E_\rho\cap G_C$ occurs and
\eqb \label{eqn-few-sle-segment}
\#\mcl K_{z_1,z_2}^\delta \leq \delta^{-1 -3v + \alpha (1+v)} \leq \delta^{-1 + \alpha/2} ,
\eqe 
where the second inequality follows from our choice of $v$. 

Now fix $\delta \in (0,\delta_0]$ and assume that $E_\rho \cap G_C$ and the event in~\eqref{eqn-few-sle-segment} occur. We show that $\wt{\frk d}_h(z_1,z_2) \leq \frk d_h(z_1,z_2)$ by constructing a path from $z_1$ to $z_2$ which is a concatenation of finitely many paths which are contained in the closure of a single set $U$ for $U\in \mcl U^- \cup \mcl U^+$ (the reverse inequality is trivial). 

By condition~\ref{item-sle-cont} in Lemma~\ref{lem-metric-reg}, if we take $\delta \leq C^{-1/2}$ then $\eta([(k-1)\delta^2 , k\delta]) \subset \BB V_\rho$ for each $k \in \mcl K_{z_1,z_2}^\delta$. In particular, $\eta([(k-1)\delta^2 , k\delta])$ does not intersect $\BB R$, so $\eta([(k-1)\delta^2 , k\delta]) \subset \bdy U_k$ for some $U_k \in \mcl U^-$ with $U_k \cap \BB V_\rho \not=\emptyset$. 

For $k \in \mcl K_{z_1,z_2}^\delta$, let $\tau_k$ (resp.\ $\sigma_k$) be the first (resp.\ last) time $\gamma_{z_1,z_2}$ hits $\eta([(k -1)\delta^2 , k \delta^2])$. Let $\wt\gamma_k$ be the $\frk d_{h|_{U_k}}$-geodesic from $\gamma_{z_1,z_2}(\tau_k)$ to $\gamma_{z_1,z_2}(\sigma_k)$. 
By condition~\ref{item-length-upper} in Lemma~\ref{lem-metric-reg}, on $G_C$ it is a.s.\ the case that for this choice of $U_k$, we have $\op{diam}   \left( \eta([(k-1)\delta^2 , k\delta^2])  ;  \frk d_{h|_{U_k}} \right) \leq 4 C\delta (|\log \delta| + 1 )^2$, so in particular 
\eqb \label{eqn-one-side-curve}
 \op{len} (\wt\gamma_k ; \frk d_{h|_{U_k}} )  \leq 4 C\delta (|\log \delta| + 1   )^2 , \quad \forall k\in \mcl K_{z_1,z_2}^\delta .
\eqe

Let $k_1$ be the element of $\mcl K_{z_1,z_2}^\delta$ with $\tau_{k_1}$ minimal (i.e.\ $\tau_{k_1}$ is the first time $\gamma_{z_1,z_2}$ hits $\eta$). Inductively, if $j \in [2,\#\mcl K_{z_1,z_2}^\delta]_{\BB Z}$ and $k_{j-1}$ has been defined, let $k \in \mcl K_{z_1,z_2}^\delta$ be chosen so that $\tau_{k_j}$ is the smallest time $\tau_k$ for $k\in \mcl K_{z_1,z_2}^\delta$ which satisfies $\tau_k \geq \sigma_{k_{j-1}}$, if such a $k$ exists; and otherwise let $k_j = \infty$. Let $J$ be the smallest $j\in \BB N$ for which $k_j = \infty$.
Let $\mathring\gamma_1 := \gamma_{z_1,z_2}|_{[0,\tau_{k_1}]}$, $\mathring \gamma_J := \gamma_{z_1,z_2}|_{[\sigma_J , \frk d_h(z_1,z_2)]}$, and for $j\in [2,J-1]_{\BB Z}$ let $\mathring \gamma_j := \gamma_{z_1,z_2}|_{[\sigma_{j-1} , \tau_j]}$. Then each curve $\mathring \gamma_j$ for $j\in [1,J]_{\BB Z}$ does not hit $\eta$ except at its endpoints so is contained in the closure of a single element $\mathring U_j \in \mcl U^- \cup \mcl U^+$, and its $\frk d_{h|_{\mathring U_j}}$-length is the same as its $\frk d_h$-length. 

Let $\wt\gamma$ be the curve obtained by concatenating the curves $\mathring\gamma_1 , \wt\gamma_{k_1} , \dots , \mathring \gamma_{k_{J-1}} , \wt\gamma_j , \mathring \gamma_J$. Then $\wt\gamma$ is a path from $z_1$ to $z_2$ and the quotient metric $\wt{\frk d}_h$ satisfies
\alb
\wt{\frk d}_h(z_1,z_2) 
&\leq \sum_{j=1}^{J-1} \op{len} (\wt\gamma_{k_j}  ; \frk d_{h|_{U_{k_j}}}  ) + \sum_{j=1}^{J } \op{len} (\mathring\gamma_j ; \frk d_{h|_{\mathring U_j}}) \\
&\leq \sum_{j=1}^{J-1} \op{len} (\wt\gamma_{k_j} ; \frk d_{h|_{U_{k_j} }} ) + \frk d_h(z_1,z_2) \\
&\leq  O_\delta\left( \delta^{ \alpha/2}  ( |\log \delta| + 1 )^2 \right) + \frk d_h(z_1,z_2) .
\ale
where the last inequality is by~\eqref{eqn-one-side-curve}. Sending $\delta \rta 0$ shows that $\wt{\frk d}_h(z_1,z_2) \leq \frk d_h(z_1,z_2)$. By Remark~\ref{remark-internal-compare} (c.f.\ the definition of the quotient metric in Section~\ref{sec-metric-basic}) we have $\wt{\frk d}_h(z_1,z_2) \geq \frk d_h(z_1,z_2)$, so in fact $\wt{\frk d}_h(z_1,z_2) = \frk d_h(z_1,z_2)$. Since $p \in (0,1)$ can be made arbitrarily small, we conclude.
\end{proof}

\begin{proof}[Proof of Theorem~\ref{thm-wedge-gluing}]
Lemma~\ref{prop-gluing-uniform} implies that a.s.\ $\frk d_h(z_1,z_2) = \wt{\frk d}_h(z_1,z_2)$ for each pair $(z_1,z_2)$ in a subset of $ \BB H \times \BB H$ which is dense in $\BB H \times \BB H$ with respect to the metric $\frk d_h \times \frk d_h$ and dense in $U\times U$ with respect to $\frk d_{h|_{U}} \times \frk d_{h|_{U}}$ for each $U\in \mcl U^- \cup \mcl U^+$. Hence, the following is true a.s. Suppose given $\ep > 0$ and $w_1,w_2\in\BB H$. Choose $U_1 , U_2 \in \mcl U^- \cup \mcl U^+$ with $w_1\in U_1$ and $w_2 \in U_2$. 
Then we can find $z_1 \in U_1$ and $z_2\in U_2$ such that $\frk d_h(z_1,z_2) = \wt{\frk d}_h(z_1,z_2)$, $\frk d_{h|_{U_1}}(z_1,w_1) \leq \ep$, and $ \frk d_{h|_{U_2}}(z_2,w_2) \leq \ep$. Note that the latter two estimates and the triangle inequality imply that $\frk d_h(z_1,z_2) = \wt{\frk d}_h(z_1,z_2) \leq \frk d_h(w_1,w_2) + 2\ep$. 
By another application of the triangle inequality, we have $\wt{\frk d}_h(w_1,w_2) \leq \wt{\frk d}_h(z_1,z_2) + 2\ep \leq \frk d(w_1,w_2) + 4\ep$, so since $\ep$ is arbitrary $\wt{\frk d}_h(w_1,w_2) \leq  \frk d(w_1,w_2)$ and hence $\wt{\frk d}_h(w_1,w_2) = \frk d_h(w_1,w_2)$.   
\end{proof}

\subsection{Metric gluing in the peanosphere}
\label{sec-general-gluing}

In this subsection we will deduce Theorem~\ref{thm-peanosphere-gluing} and Theorem~\ref{thm-peanosphere-gluing-finite} from Theorem~\ref{thm-wedge-gluing} and Theorem~\ref{thm-cone-gluing}. 

\begin{proof}[Proof of Theorem~\ref{thm-peanosphere-gluing}]
Let $\eta^L$ (resp.\ $\eta^R$) be the left (resp.\ right) boundary of $\eta'((-\infty , 0])$. Note that $\mcl U^- \cup \mcl U^+$ is precisely the set of connected components of $\BB C\setminus (\eta^L\cup \eta^R)$. 

By~\cite[Footnote 4]{wedges} and~\cite[Theorem~1.1]{ig4}, $\eta^L$ has the law of a chordal SLE$_{8/3}(-2/3)$ from $0$ to $\infty$ in $\BB H$. 
By~\cite[Theorem~1.5]{wedges}, the surface $(\BB C \setminus \eta^L , h|_{\BB C\setminus \eta^L} , 0 ,\infty)$ has the law of a weight-$4/3$ quantum wedge.
By Theorem~\ref{thm-cone-gluing}, it is a.s.\ the case that $(\BB C , \frk d_h)$ is the metric space quotient of $(\BB C\setminus \eta^L , \frk d_{h|_{\BB C\setminus \eta^L} })$ under the natural identification. 

By~\cite[Theorem~1.11]{ig4}, the conditional law of $\eta^R$ given $\eta^L$ is that of a chordal SLE$_{8/3}(-4/3 ; -4/3)$ from $0$ to $\infty$ in $\BB C\setminus \eta^L$. By Theorem~\ref{thm-wedge-gluing}, a.s.\ $(\BB C\setminus \eta^L , \frk d_{h|_{\BB C\setminus \eta^L} })$ is the metric quotient of the disjoint union of the metric spaces $(\ol U , \frk d_{h|_U})$ for $U\in\mcl U^-\cup \mcl U^+$ under the natural identification, except that we don't identify points on $\eta^L$. The theorem statement follows by combining this with the previous paragraph. 
\end{proof}

Next we will deduce Theorem~\ref{thm-peanosphere-gluing-finite} from Theorem~\ref{thm-peanosphere-gluing} and an absolute continuity argument.

\begin{proof}[Proof of Theorem~\ref{thm-peanosphere-gluing-finite}]
Since $\eta'$ is parameterized by $\mu_h$ and a.s.\ hits $\mu_h$-a.e.\ point of $\BB C$ exactly once, it follows that $\eta'(\BB t)$ is independent from $\eta'$ and its conditional law given $h$ is $\mu_h$. Hence we can assume that $(\BB C , h , \infty)$ is embedded into $\BB C$ in such a way that $\eta'(\BB t) = 0$, in which case $(\BB C , h , 0, \infty)$ is a doubly marked quantum sphere. We also assume that our embedding is such that $\mu_h(\BB D) = 1/2$. 

For $R> 1$, let $A_R$ be the closed annulus $\ol{B_R(0)} \setminus B_{1/R}(0)$. Let $\mcl U_R$ be the set of $U\in\mcl U^-\cup\mcl U^+$ such that $U\subset A_R$. We observe that our choice of embedding implies that $U$ is determined by $\eta'$, viewed as a curve modulo monotone re-parameterization. Let $W_R := \bigcup_{U\in \mcl U_R} U $. Almost surely, $0$ is not contained in $\bdy U$ for any $U\in\mcl U^-\cup \mcl U^+$ and every element of $\mcl U^-\cup\mcl U^+$ is bounded. It follows that 
\eqb \label{eqn-annulus-bubble-contain}
\text{for each $R > 1$, there exists $R'>1$ such that $A_R\subset W_{R'}$ .}
\eqe
 
Let $(\BB C  ,\wh h , 0, \infty)$ be a $\sqrt{8/3}$-quantum cone independent from $\eta'$ with the embedding chosen so that $\mu_h(\BB D) = 1/2$ and let $\wh\eta'$ be the curve obtained by parameterizing $\wh\eta'$ by $\mu_{\wh h}$-mass in such a way that $\wh\eta'(0) = 0$. Then for each $R>1$, the laws of $h|_{A_R}$ and $\wh h|_{A_R}$ are mutually absolutely continuous, so also the laws of $(h|_{A_R} , \mcl U_R)$ and $(\wh h|_{A_R} , \mcl U_R)$ are mutually absolutely continuous. 

By Theorem~\ref{thm-peanosphere-gluing} and the definition of the quotient metric, it is a.s.\ the case that for each $\ep > 0$ and each $z,w\in \BB C$, there exists $N  \in \BB N$, $U_{1},\dots , U_N \in \mcl U^-\cup \mcl U^+$, and $\frk d_{\wh h|_{U_{i}}}$-geodesics $\wh \gamma_i$ for $i\in [1,N]_{\BB Z}$ whose concatenation is a path from $z$ to $w$ such that
\eqbn
\sum_{i=1}^N  \op{len} \left(\wh\gamma_i ;  \frk d_{\wh h|_{U_{i}}} \right) \leq \frk d_{\wh h}(z,w) + \ep .
\eqen 
Note that if $z\in W_R$ and $\frk d_{\wh h}(z,w) \leq \frac14 \frk d_{\wh h}(z, \bdy W_R)$, then for small enough $\ep$ each of the sets $U_i$ for $i\in [1,N]_{\BB Z}$ must belong to $\mcl U_R$. 

For $R>1$ and $z\in W_R$, let $\rho_R(z) := \frk d_h(z , \bdy W_R)$. Since metric balls are locally determined by the field, we find that for each $z\in A_R$, both $\rho_R(z)$ and the restriction of $\frk d_h$ to $B_{\rho_R(z)} (z ;\frk d_h)$ are a.s.\ determined by $(h|_{A_R} , \mcl U_R)$. Consequently, it follows from the preceding paragraph and absolute continuity that a.s.\ for each $z,w \in W_R$ with $\frk d_h(z,w) \leq \frac14 \rho_R(z)$ and each $\ep > 0$, there exists $N  \in \BB N$, $U_{1},\dots , U_N \in \mcl U_R$, and $\frk d_{  h|_{U_{i}}}$-geodesics $\gamma_i$ for $i\in [1,N]_{\BB Z}$ whose concatenation is a path from $z$ to $w$ such that
\eqb  \label{eqn-sphere-path-approx}
\sum_{i=1}^N  \op{len} \left( \gamma_i ;  \frk d_{ h|_{U_{i}}} \right) \leq \frk d_{  h}(z,w) + \ep .
\eqe 

Now let $z_1,z_2$ be sampled uniformly from $\mu_h|_{A_R}$, normalized to be a probability measure and let $\gamma$ be the $\frk d_h$-geodesic from $z_1$ to $z_2$. By Lemma~\ref{prop-geodesic-plane}, $\gamma$ is a.s.\ unique and a.s.\ does not hit $0$ or $\infty$. Hence~\eqref{eqn-annulus-bubble-contain} implies that for each $p \in (0,1)$ there exists $R' >R$ such that 
$\BB P  \left[ \gamma \subset W_{R'} \right] \geq 1-p $.

On the event $\{ \gamma \subset W_{R'}\}$, there a.s.\ exists $M\in\BB N$ and times $0  = t_0 <t_1 < \dots < t_M = \frk d_{ h}(z_1,z_2)$ such that $\frk d_{ h}(   \gamma(t_i), \bdy W_{R'}) \geq 4(t_i - t_{i-1})$ for each $i\in[1,M]_{\BB Z}$. By applying~\eqref{eqn-sphere-path-approx} for each $i\in [1,M]_{\BB Z}$ with $(z,w) = (\gamma(t_{i-1}) , \gamma(t_i))$ and $\ep/M$ in place of $\ep$, we a.s.\ obtain a finitely many paths, each of which is a $\frk d_{h|_U}$-geodesic for some $U\in \mcl U_{R'}$, whose concatenation is a path from $z_1$ to $z_2$ and whose total length is at most $\frk d_h(z_1,z_2) + \ep$. Since $R>1$, $p\in (0,1)$, and $\ep>0$ are arbitrary and by~\eqref{eqn-annulus-bubble-contain}, we infer that if $\wt{\frk d}_h$ is the quotient metric on $\BB C$ obtained by identifying the metric spaces $(U , \frk d_{h|_U})$ for $U\in\mcl U^-\cup\mcl U^+$ as in the statement of the lemma, then a.s.\ $\frk d_h(z_1,z_2) = \wt{\frk d}_h(z_1,z_2)$. By the same argument used to conclude the proof of Theorem~\ref{thm-wedge-gluing}, we obtain the desired result. 
\end{proof}

\appendix

\section{Estimate for quantum diameters}
\label{sec-quantum-diam}

In this appendix we prove an estimate to the effect that the quantum diameters of certain subsets of $\BB H$ with respect to the restriction of the field corresponding to a $\sqrt{8/3}$-quantum wedge have a polynomial tail (similar estimates for other GFF-type distributions can be obtained using local absolute continuity). This estimate is only used in the proof of Lemma~\ref{prop-metric-bdy-wedge}.

\begin{lem} \label{lem-length-ball-diam}
Let $(\BB H , h , 0, \infty)$ be a $\sqrt{8/3}$-quantum wedge normalized so that $\nu_h([0,1]) = 1$ and let $U,V\subset \BB H$ be connected open sets such that $U$ is bounded, $\ol U \cap\BB H \subset V$, $1 \in \bdy V$, and $0\notin \ol U$. There is a universal constant $\wt\beta > 0$ such that for each $C>0$, 
\eqb \label{eqn-length-ball-diam}
\BB P\left[ \op{diam}\left( U  ; \frk d_{h|_{V}} \right)  \leq C \right] = 1 - O_C(C^{-\wt\beta}).
\eqe
(The dependence on the choice of $U,V$ is in the implicit constant in the $ O_C(C^{-\wt\beta})$ term.)
\end{lem}
\begin{proof}
The idea of the proof is to find a random subset $\mcl B$ of $\BB H$ such that the quantum surface parameterized by $\mcl B$ has the law of a quantum disk and $U\subset\mcl B\subset V$ with uniformly positive conditional probability given $h$. We can then use basic diameter estimates for the quantum disk which come from its definition in terms of the Brownian snake (Section~\ref{sec-brownian-disk}). 
To find such a subset, we will use results for SLE on the $\sqrt{8/3}$-quantum wedge from~\cite{wedges}. 
See Figure~\ref{fig-diam-estimate} for an illustration. 

To this end, let $\eta'$ be a chordal SLE$_6$ from $0$ to $\infty$ sampled independently from $h$ and let $\mcl B$ be the connected component of $\BB H\setminus \eta'$ with 1 on its boundary. 
By~\cite[Theorem 1.18]{wedges}, if we condition on $\nu_h(\bdy\mcl B)$ then the conditional law of the quantum surface $(\mcl B , h|_{\mcl B})$ is that of a quantum disk with specified boundary length and random area. Equivalently, by~\cite[Corollary 1.4]{lqg-tbm2}, the metric space $(\mcl B , \frk d_{h|_{\mcl B}})$ is a Brownian disk with boundary length $\nu_h(\bdy\mcl B)$ and area $(\nu_h(\bdy \mcl B))^2 X$ where $X$ is sampled from the distribution $(2\pi)^{-1/2} a^{-5/2} e^{-1/(2a)} \,da$ independently of $\nu_h(\bdy \mcl B)$. Using the definition of the Brownian disk with specified area and boundary length given in Section~\ref{sec-brownian-disk} together with the tail estimate for the maximum of the Brownian snake~\cite[Proposition~14]{serlet-snake} and the Gaussian tail bound for the maximum of a Brownian bridge, it is easily seen that there is a universal constant $\wt\beta_0 > 0$ such that for each $C>0$,  
\eqb \label{eqn-diam-given-area}
\BB P\left[ \op{diam}\left( \mcl B ; \frk d_{h|_{\mcl B}} \right) \leq C \nu_h(\bdy\mcl B)^{1/2} \,|\, \nu_h(\bdy\mcl B) \right] = 1 - O_C(C^{-\wt\beta_0 }) .
\eqe 
Here, to apply~\cite[Proposition~14]{serlet-snake}, we decompose the process $X$ into excursions above its record minimum, as in the proof of Lemma~\ref{prop-disk-bdy-holder}.

To get an estimate for the unconditional internal diameter of $\mcl B$, we need to estimate $\nu_h(\bdy\mcl B)$. We do this using the L\'evy process description of the left/right boundary length process for $\eta'$ from~\cite{wedges}. 
If we let $R_t$ for $t\geq 0$ be the $\sqrt{8/3}$-LQG length of the right outer boundary of $\eta'([0,t])$ minus the $\sqrt{8/3}$-LQG length of the interval to the right of 0 which is disconnected from $\infty$ by $\eta'([0,t])$, then the time at which $\eta'$ disconnects $\mcl B$ from $\infty$ is the same as the first time $t$ for which $R_t \leq -1$. If we let $T$ be this time, then $\nu_h(\bdy\mcl B) = R_{T^-} - R_T$.
By~\cite[Corollary 1.19]{wedges}, if $\eta'$ is parameterized according to so-called \emph{quantum natural time}, then $R_t$ evolves as a $3/2$-stable process with only downward jumps. 
Using this, the law of $R_{T^-} -R_T$ can be computed explicitly using a generalization of the arcsine law (see, e.g.,~\cite[Example~7]{dk-overshoot}).
In particular, $\BB P[R_{T^-} -R_T > A ]$ decays like a positive power of $A^{-1}$. Combining this with~\eqref{eqn-diam-given-area}, we find that there is a $\wt\beta  > 0$ such that
\eqb \label{eqn-diam-marginal}
\BB P\left[ \op{diam}\left( \mcl B ; \frk d_{h|_{\mcl B}} \right) \leq C    \right] = 1 - O_C(C^{-\wt\beta}) .
\eqe 

By replacing $U$ with its intersections with each of the left and right quarter planes in $\BB H$, we can assume without loss of generality that $\ol U$ does not disconnect 0 from $\infty$ in $\BB H$. 
Since $\eta'$ is independent from $h$ and $\eta'$ can be made to stay arbitrarily close to a deterministic curve with positive probability~\cite[Lemma~2.5]{miller-wu-dim}, 
under this assumption on $U$ we can find a constant $p > 0$ (depending only on $U$ and $V$) such that a.s.\ 
\eqb \label{eqn-bubble-pos-prob}
\BB P\left[ U  \subset \mcl B \subset V \,|\, h \right] \geq p .
\eqe 
If $ U  \subset \mcl B \subset V $, then (using Lemma~\ref{prop-internal-equal}) we have $\op{diam}\left( U  ; \frk d_{h|_{ V}} \right) \leq \op{diam}\left( \mcl B ; \frk d_{h|_{\mcl B}} \right)$. 
Hence, combining~\eqref{eqn-diam-marginal} and~\eqref{eqn-bubble-pos-prob} yields~\eqref{eqn-length-ball-diam}.
\end{proof}

\begin{figure}[t!]
 \begin{center}
\includegraphics[scale=.85]{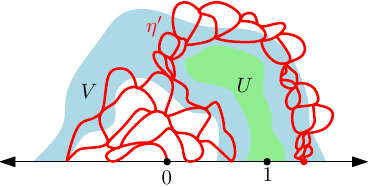}
\vspace{-0.01\textheight}
\caption{Illustration of the proof of Lemma~\ref{lem-length-ball-diam}. The connected component $\mcl B$ of $\BB H\setminus \eta'$ with $1$ on its boundary is a quantum disk, equivalently a Brownian disk, which allows us to estimate its diameter. On the other hand, and this component contains~$U$ and is contained in $V$ with positive conditional probability given~$h$. }\label{fig-diam-estimate}
\end{center}
\vspace{-1em}
\end{figure} 

The following consequence of Lemma~\ref{lem-length-ball-diam} is used in the proof of Lemma~\ref{prop-metric-bdy-wedge}.

\begin{lem} \label{lem-circle-avg-ball-diam}
Let $(\BB H , h , 0, \infty)$ be the circle average embedding of a $\sqrt{8/3}$-quantum wedge (so that $h$ is the random distribution from~\cite[Definition~4.5]{wedges}). Also fix $0 < a' < a < 1   < b < b'$ and define the semi-annuli $U := \BB H\cap (\ol{B_b(0) \setminus B_a(0)})$ and $V := \BB H\cap (\ol{B_{b'}(0) \setminus B_{a'}(0)})$.  There is a universal constant $\beta  > 0$ such that for each $C>0$, 
\eqb \label{eqn-circle-avg-ball-diam}
\BB P\left[ \op{diam}\left( U  ; \frk d_{h|_{V}} \right)  \leq C \right] = 1 - O_C(C^{-\beta}) .
\eqe
(The dependence on the choice of $a,b,a',b'$ is in the implicit constant in the $O_C(C^{-\beta})$ term.)
\end{lem}
\begin{proof}
We will extract the lemma from Lemma~\ref{lem-length-ball-diam} and a union bound over possible approximate values of $\nu_h([0,1])$ (which is random with our present choice of normalization for $h$). 
Fix $  \wt a', \wt a , \wt b , \wt b' > 0$ with $a' < \wt a' < \wt b' < b'$ and $a < \wt a < \wt b < a$. 
For $s > 0$, let $r_s > 0$ be chosen so that $\nu_h([0,r_s]) = s$ and let $U_s := \BB H\cap (\ol{B_{\wt b r_s}(0) \setminus B_{\wt a r_s}(0)})$ and $V := \BB H\cap (\ol{B_{\wt b' r_s}(0) \setminus B_{\wt a' r_s}(0)})$.

Let $\wt\beta > 0$ be as in Lemma~\ref{lem-length-ball-diam}. By Lemma~\ref{lem-length-ball-diam} together with the invariance of the law of the quantum wedge under scaling boundary lengths by $s > 0$, areas by $s^2$, and distances by $s^{1/2}$ (\cite[Proposition 4.7(i)]{wedges} and~\cite[Lemma 2.2]{lqg-tbm2}), we see that for each $s > 0$, 
\eqb \label{eqn-diam-s}
\BB P\left[ \op{diam}\left( U_s  ; \frk d_{h|_{V_s}} \right)  \leq C s^{1/2} \right] = 1 - O_C(C^{-\wt\beta}) .
\eqe 

We now need to transfer this bound from the pair $(U_s,V_s)$ to the pair $(U,V)$ in the statement of the lemma.
By our choice of $ \wt a', \wt a , \wt b , \wt b' $, there is an $\ep > 0$ (depending only on the $a$'s and $b$'s) such that if $r_s \in [1-\ep ,1+\ep]$, then $U_s \subset U$ and $V\subset V_s$. Hence
\eqb \label{eqn-diam-s-compare}
 r_s \in [1-\ep ,1+\ep] \quad \Rightarrow \quad \op{diam}\left( U  ; \frk d_{h|_{V}} \right) \leq  \op{diam}\left( U_s  ; \frk d_{h|_{V_s}} \right) .
\eqe
Since $\nu_h([1-\ep,1])$ and $\nu_h([1,1+\ep])$ have finite moments of all negative orders~\cite[Theorem 2.12]{rhodes-vargas-review}, for each $\delta >0$ it holds except on an event of probability decaying faster than any power of $C^{-1}$ that $\nu_h([1-\ep,1])$ and $\nu_h([1,1+\ep])$ are each at least $C^{-\delta}$. If this is the case, then the relations in~\eqref{eqn-diam-s-compare} both hold provided $|s - \nu_h([0,1])| \leq C^{-\delta}$. On the other hand, by standard moment estimates for the LQG measure (see~\cite[Theorems 2.11 and 2.12]{rhodes-vargas-review} plus the boundary measure analog of~\cite[Lemma A.3]{ghs-dist-exponent} to deal with the log singularity at 0) we find that except on an event of probability decaying faster than some positive power of $C^{-1}$, we have $\nu_h([0,1]) \in [C^{-\wt\beta/4} , C^{\wt\beta/4}]$. We now conclude by choosing $\delta < \wt\beta/4$, applying~\eqref{eqn-diam-s} to $O_C(C^{\wt\beta/4 + \delta})$ values of $s \in [C^{-\wt\beta/4} , C^{\wt\beta/4}]$, and taking a union bound.
\end{proof}

\section{Upper bound for the probability that $\mathrm{SLE}_\kappa(\rho)$ hits a point}
 
We will prove the following rough estimate, which is needed in the special case when $\kappa=8/3$ and $\ul\rho= (\frk w^--2, \frk w^+-2)$ in the proof of Lemma~\ref{prop-sle-intersect-mean}. We prove the lemma for general $\kappa$ and $\ul\rho$ since the proof is the same. 

\begin{lem} \label{lem-sle-hit}
Let $\kappa \in (0,8)$ and let $\ul\rho = (\rho_1,\dots,\rho_k)$ be a vector of weights. Also let $x,y\in \bdy \BB D$ and let $\eta$ be a chordal SLE$_\kappa(\ul\rho)$ from $x$ to $y$ in $\BB D$ with force points locations $z_1,\dots,z_k \in \bdy\BB D$. Assume that $\eta$ a.s.\ does not hit the continuation threshold, i.e., the sum of the force points which have been hit or disconnected from $y$ by $\eta$ up to any fixed time is $>-2$ (this means that $\eta$ is a.s.\ defined for all time~\cite[Theorem 2.2]{ig1}).  There exists $\alpha_0 = \alpha_0(\kappa,\ul\rho) > 0$ such that 
\eqb \label{eqn-sle-hit}
\BB P\left[ \eta \cap B_\ep(w) \not=\emptyset \right] \preceq \ep^{\alpha_0} ,\quad \forall \ep \in (0,1) ,\quad \forall w\in \BB D .
\eqe
Here the implicit constant depends on $\kappa , \ul\rho,$ and $w$ (but not on $x,y,z_1,\dots,z_n$) and is uniform for $w$ in compact subsets of $\BB D$. 
\end{lem} 

Lemma~\ref{lem-sle-hit} in the case when $\ul\rho = 0$, with exponent $\alpha_0 = 1-\kappa/8$, follows from~\cite[Proposition 4]{beffara-dim}. We expect that $\alpha_0 = 1-\kappa/8$ for general values of $\ul\rho$ as well, and that this can be proven, e.g., by adapting the arguments of~\cite[Section 3]{light-cone-dim}. However, we do not need this stronger estimate here so for the sake of brevity we do not derive it.

\begin{proof}[Proof of Lemma~\ref{lem-sle-hit}]
Let $r_0 := 1/100$. We first argue that there is a $p = p(\kappa,\ul\rho) > 0$, \emph{not} depending on $x,y,z_1,\dots,z_n$, such that 
\eqb  \label{eqn-one-pt-hit}
\BB P\left[ \eta \cap B_{r_0}(0) = \emptyset \right] \geq p  .
\eqe 
Indeed, by the Schramm-Wilson coordinate change formula~\cite[Theorem 3]{sw-coord}, if we let $\wt \rho$ be equal to $\kappa-6$ minus the sum of the coordinates of $\ul\rho$, then $\eta$ agrees in law with a radial SLE$_\kappa(\ul\rho  , \wt\rho)$ from $x$ to $0$ in $\BB D$, with force points at $z_1,\dots,z_n$ plus an extra force point of weight $\wt\rho$ at $y$, stopped at the (a.s.\ finite) time when it hits $f(y)$. By~\cite[Lemma~2.5]{miller-wu-dim}, such a process has positive probability to avoid $B_{r_0}(0)$ for any fixed choice of $x,y,z_1,\dots,z_n$. It is shown in~\cite[Section 2.2]{ig1} that the law of the driving process of SLE$_\kappa(\ul\rho)$ process depends continuously on the force point locations. By combining this with the radial analog of~\cite[Proposition 4.43]{lawler-book}, we find that the probability of avoiding $B_{r_0}(0)$ depends continuously on the force point locations. Since the space of possible choices of $x,y,z_1,\dots,z_n$ is compact, the preceding probability can be taken to be uniform over all possible choices of $x,y,z_1,\dots,z_n$. Thus~\eqref{eqn-one-pt-hit} holds.

We now iteratively apply~\eqref{eqn-one-pt-hit} to prove the statement of the lemma in the case $w = 0$. Let $\tau$ be the first time $\eta$ hits $B_{r_0}(0)$, so that by~\eqref{eqn-one-pt-hit} we have $\BB P[\tau<\infty] \leq 1 - p$. On the event $\{\tau <\infty\}$, let $f : \BB D\setminus \eta([0,\tau]) \rta \BB D$ be the conformal map which fixes $0$ and takes $\eta(\tau)$ to $x$. On the event $\{\tau<\infty\}$, the conditional law of $f(\eta|_{[\tau,\infty)})$ given $f(\eta|_{[0,\tau]})$ is that of an SLE$_\kappa(\ul\rho)$ in $\BB D$ started from $x$ with some choice of target point and force point locations. By~\eqref{eqn-one-pt-hit}, the conditional probability that $f(\eta|_{[0,\tau]})$ hits $B_{r_0}(0)$ is at most $1-p$. On the other hand, by standard distortion estimates we have $f (B_{r_0^2/4}(0)) \subset  B_{r_0}(0)$. Therefore, the probability that $\eta$ hits $B_{r_0^2/4}(0)$ is at most $(1-p)^2$. Iterating this $n$ times, we find that 
\eqbn
\BB P\left[ \eta\cap B_{r_0^{n }/4^{n-1}}(0) \not=\emptyset \right] \leq (1-p)^n  , \quad \forall n\in\BB N
\eqen
which implies~\eqref{eqn-sle-hit} for an appropriate choice of $\alpha_0$ in the case when $w=0$. 

To treat the case when $w\not=0$, let $\phi_w : \BB D\rta\BB D$ be the conformal map which fixes $x$ and takes $w$ to 0 and choose $c > 0$ so that $\phi_w(B_\ep(w)) \subset B_{c\ep}(0)$ for each $\ep \in(0,c)$. We can take $c$ to be uniform over all choices of $w$ in any fixed compact subset of $\BB D$. The curve $\phi_w(\eta)$ is an SLE$_\kappa(\ul\rho)$ with a possibly different choice of target point and force point locations. The bound~\eqref{eqn-sle-hit} for $w\not=0$ therefore follows from~\eqref{eqn-sle-hit} for $w=0$. 
\end{proof}

\bibliography{cibiblong,cibib}
\bibliographystyle{hmralphaabbrv}

\end{document}